\newtheorem{thm}{Theorem}[section]
\newtheorem{lem}[thm]{Lemma}
\newtheorem{prop}[thm]{Proposition}
\newtheorem{cor}[thm]{Corollary}
\theoremstyle{definition}
\newtheorem{defn}[thm]{Definition}
\newtheorem{example}[thm]{Example}
\newtheorem{rem}[thm]{Remark}
\newcommand\redgrey{red for colour online/pdf version, dark grey for black and white print}
\def\cA{{\mathcal A}}
\def\cD{{\mathcal D}}
\def\cJ{{\mathcal J}}
\newcommand{\ds}{\displaystyle}
\newenvironment{mat}{ \left[ \begin{array}{ccccc} }{\end{array} \right]  }
\newenvironment{vek}{ \left[ \begin{array}{c} }{\end{array} \right] }
\newcommand{\dom}{{\cal D}}
\newcommand{\ran}{{\cal R}}
\newcommand{\la}{\langle}
\newcommand{\ra}{\rangle}
\newcommand{\hhalf}{H_{\frac{1}{2}}}
\newcommand{\half}{\frac{1}{2}}
\newcommand{\be}{\begin{equation}}
\newcommand{\ee}{\end{equation}}
\newcommand{\bea}{\begin{eqnarray}}
\newcommand{\eea}{\end{eqnarray}}
\newcommand{\beann}{\begin{eqnarray*}}
\newcommand{\eeann}{\end{eqnarray*}}
\newcommand{\re}{{\rm Re}\hspace{0.08mm}}
\newcommand{\im}{{\rm Im} \hspace{0.08mm}}
\renewcommand{\Re}{\operatorname{Re}}
\renewcommand{\Im}{\operatorname{Im}}
\newcommand{\V}{\Vert}
\newcommand\rmref[1]{{\rm\ref{#1}}}
\newcommand\vsqueeze[1]{\nopagebreak\vspace{-#1}}
\newcommand\C{{\mathbb C}}
\newcommand\R{{\mathbb R}}
\newcommand\N{{\mathbb N}}
\renewcommand\i{{\rm i}}
\renewcommand\V{V\kern-0.1em}
\title{\bf \Large Numerical Range and Quadratic Numerical Range\\ for Damped Systems}%
\author{Birgit Jacob, Christiane Tretter, Carsten Trunk, and Hendrik Vogt}%
\begin{document}
\maketitle

\abovedisplayskip=8pt plus 3pt minus 3pt
\belowdisplayskip=\abovedisplayskip
\abovedisplayshortskip=-1pt plus 3pt
\belowdisplayshortskip=\belowdisplayskip

\begin{abstract}%
We prove new enclosures for the spectrum of non-selfadjoint operator matrices associated with second order linear differential equations
$\ddot{z}(t) + D \dot{z} (t) + A_0 z(t) = 0$ in a Hilbert space. Our main tool is the quadratic numerical range for which we establish
the spectral inclusion property under weak assumptions on the operators involved; in particular, the damping operator only needs to be
accretive and may have the same strength as $A_0$.   By means of the quadratic numerical range, we establish tight spectral estimates
in terms of the unbounded operator coefficients $A_0$ and $D$ which improve earlier results for sectorial and selfadjoint $D$;
in contrast to numerical range bounds, our enclosures may even provide bounded imaginary part of the spectrum or a spectral free vertical strip.
An application to small transverse oscillations of a horizontal pipe carrying a steady-state flow of an ideal incompressible fluid illustrates that our new bounds are explicit.
\end{abstract}

{\bf Keywords} Abstract second order differential equation, damping, spectrum, operator \\
\indent
\phantom{\bf Keywords} matrix, numerical range, quadratic numerical range.\\

{\bf Mathematics Subject Classification} 47A10, 47A12, 34G10, 47D06, 76B99.


\section{Introduction}

Many linear stability problems in applications, in particular in elasticity theory and hydromechanics, are modeled by second order differential equations of the form
\begin{equation}
\label{sys}
\ddot{z}(t) + D \dot{z} (t) + A_0 z(t) = 0
\end{equation}
in a Hilbert space $H$ where $A_0$ is a self-adjoint and uniformly positive operator in $H$ and $D$ is a linear operator in $H$ representing e.g.\ the damping of the underlying system.
Here we consider the case that $A_0^{-\frac 12}D A_0^{-\frac 12}$ is bounded and accretive. Both $A_0$ and $D$ may be unbounded, 
$D$ may be equally strong as $A_0$ and need not be self-adjoint, and for some results, $D$ need not even be sectorial.

By means of the standard substitution $x = (z, \dot z)^{\top}$, the second order differential equation \eqref{sys} is equivalent to a first-order system
\begin{equation}
\label{first1}
\dot{x} (t)  =  \cA x(t)
\end{equation}
in a suitably defined product Hilbert space. More precisely, if we equip the \vspace{-2mm} space $H_{\half}:=\cD(A_0^{\half})$ with the graph norm of $A_0^{\half}$, then the operator
$\mathcal A:\cD(\mathcal A)\subset H_{\half} \times H\rightarrow H_{\half}\times H$ associated with \eqref{sys} is defined as
\begin{equation} \label{AA}
\mathcal A = \left[\begin{array}{cc} 0 & I \\ -A_0 & -D
\end{array} \right],
\quad
\mathcal D(\mathcal A) = \left\{ \ds \begin{bmatrix} z \\ w \end{bmatrix}  \in H_{\half} \times H_{\half}
  \mid A_0 z + D w \in H  \right\} .
\end{equation}

Under stronger assumptions on the damping operator $D$ such as self-adjoint\-ness and/or stronger relative boundedness, operators of this form and applications in elasticity theory 
or hydromechanics have been studied intensively in the literature for more
than 20 years, see
e.g.\ \cite{biw,baen,chru,CLL,chtr,HSII,hu2,huNEU,JT,JTW,lanshk,shk94,TWII,ves}.
In particular, it was proved that $\cA$ is boundedly invertible, has spectrum in the closed left half-plane, and
generates a strongly continuous semigroup of contractions on $H_{\half} \!\times\! H $, see e.g.\ \cite[Proposition 5.1]{TWII}.

Another example for a differential equation \eqref{sys} and corresponding operator $\cA$ are abstract Klein-Gordon equations originating in quantum mechanics, see e.g.\ \cite{LNT2} and the references therein. In this case $A_0$ has the form $A_0=H_0-V^2$ and $D=2V$ where $H_0$ is a self-adjoint uniformly positive operator, e.g.\ $-\Delta+mc^2$ on $\R^n$ with particle mass $m>0$, and
$V$ is a symmetric operator such that $VH_0^{\smash[t]{-\half}\vphantom{\half}}$ is bounded. By means of indefinite inner product methods, the spectrum of $\cA$ was analyzed and criteria on~$D$ were found ensuring that $\cA$ generates a group of bounded unitary operators in a Pontryagin space in \cite{LNT1}.

The aim of this paper is to establish new enclosures for the spectrum of the operator $\cA$ in \eqref{AA} under rather weak assumptions on the damping operator $D$, allowing it to be as strong as $A_0$ so that even very general perturbation results such as \cite{CueT} do not apply.
To this end, we do not only use the classical numerical range of $\cA$, but also the so-called \emph{quadratic numerical range}. The latter
 was introduced in 1998
for operator matrices with bounded off-diagonal entries in \cite{LT}, shortly after studied in great detail for bounded operator matrices in
\cite{LMMT,LMT}, and in 2009 generalized to diagonally dominant and off-diagonally dominant operator matrices in \cite{T-art}.
Unlike the numerical range, the quadratic numerical range is not convex: it may consist of two components which need not be convex either. 
Since the quadratic numerical range is always contained in the numerical range, see \cite{T}, it may give tighter spectral enclosures. 

We show that this is indeed \emph{always} the case here, for uniformly accretive, for sectorial and even for self-adjoint damping operator $D$ 
(see Figures~\ref{fig0}-\ref{fig9} below). 

If $D$ is only assumed to be uniformly accretive relative to $A_0$ in $H$ (and hence uniformly accretive in $H$) and its numerical range is e.g.\ a half-plane, then the numerical range cannot provide a better spectral enclosure than the left half-plane since it is convex
and contains the numerical ranges of the diagonal elements $D$ and $0$ of $\cA$. The quadratic numerical range yields a non-convex enclosing set to the left of the imaginary axis and, under a certain additional condition, it provides a vertical strip free of spectrum, see Theorem \ref{hendrik}.

If $D$ is assumed to be sectorial with angle $<\pi$ and uniformly accretive in $H$, 
then the quadratic numerical range~of $\cA$ always yields an enclosure 
with corner at $0$, whereas the numerical range may still be a half-plane; if $D$ is uniformly accretive relative to $A_0$ in $H$,
the former is even contained in a sector, while the latter only gives a parabolic enclosure, see Theorem \ref{supernett} and Proposition \ref{SpekInclusion}.
In fact, it was proved in \cite{LMT} for the bounded case that, while every corner of the numerical range
must belong to the spectrum $\sigma(\cA)$, corners of the quadratic numerical range may also belong to the spectrum of a diagonal entry of $\cA$. 
Here $0\notin\sigma(\cA)$, but $0$ belongs to both numerical range and quadratic numerical range; hence $0$ cannot be a corner of $W(\cA)$, but $0$ may be, and indeed is, a corner of $W^2(\cA)$ since
it belongs to the spectrum of the zero operator on the diagonal of~$\cA$.

Even for self-adjoint $D$, the difference between numerical range and quadratic numerical range is substantial. Whereas the imaginary part of the numerical range is always unbounded if
$A_0$ is unbounded, 
the quadratic numerical range may have bounded imaginary part,
may be partly confined to the negative real axis or may even be entirely real, see Theorem \ref{nett}! In the latter case, under a certain additional condition, it may even consist of two disjoint real intervals.

There are two key problems we have to solve before we can take advantage of the quadratic numerical range. Firstly, the operator $\mathcal A$ in (\ref{AA}) is not an operator matrix itself since its domain does not decompose according to the decomposition $H_{\half}\times H$ of the space in which $\cA$ acts; in fact, $\cA$ is merely the closure of the operator matrix $\cA|_{H_1\times H_1}$ and only the quadratic numerical range of $\cA|_{H_1\times H_1}$ is defined. Secondly, the operator matrix $\cA|_{H_1\times H_1}$ with its three unbounded entries $I: H \to H_{\half}$, $A_0: H_{\half} \to H$, and $D:H\to H$,
is neither diagonally dominant nor off-diagonally dominant; in fact, in the first column the stronger entry is the off-diagonal $A_0$, while in the second column the stronger entry is the diagonal~$D$.

Our first main result is the 
so-called spectral inclusion property of the quadratic numerical range,
i.e.\ the set of inclusions
\begin{alignat*}{2}
  \sigma_p(\cA|_{H_1\times H_1}) & \subset W^2(\cA|_{H_1\times H_1}), \quad  & \sigma_{ap}(\cA|_{H_1\times H_1}) & \subset \overline{W^2(\cA|_{H_1\times H_1})},\\
  \sigma_p(\cA) & \subset \overline{W^2(\cA|_{H_1\times H_1})}, & \sigma_{ap}(\cA) & \subset \overline{W^2(\cA|_{H_1\times H_1})},
\end{alignat*}
for the point and approximate point spectrum of $\cA|_{H_1\times H_1}$ and $\cA$, respectively. As usual,
one has to require the existence of at least one point of the resolvent set $\rho(\cA)$ in each component of $\C\setminus \overline{W^2(\cA|_{H_1\times H_1})}$
to obtain the full chain of spectral enclosures
\begin{equation}
\label{chain}
\sigma(\mathcal A) \subset \overline{W^2(\cA|_{H_1\times H_1})} \subset \overline{W(\mathcal A)}.
\end{equation}

Although neither the numerical range nor the quadratic numerical range may be
determined precisely, analytic estimates for either of them provide bounds
for the spectrum via the enclosures~\eqref{chain}. We derive an estimate for $W(\cA)$ and a series of estimates for $W^2(\cA|_{H_1\times H_1})$
in terms of various constants relating the ``real part" of the operator $D$ to $A_0$ and, if $D$ is sectorial with angle $<\pi$, in terms of its sectoriality angle.
In all cases, the quadratic numerical range yields tighter bounds than the numerical range since it allows for finer estimates.
Moreover, we compare all the obtained estimates for $W^2(\cA|_{H_1\times H_1})$ and combine them to further improve the enclosure for the spectrum. 

As an application and illustration of our results, we consider an operator of
the form (\ref{AA}) arising in the investigation of small transverse
oscillations of a pipe carrying steady-state flow of an ideal incompressible
 fluid. The corresponding second order equation \eqref{sys} is of the form
\begin{equation}\label{pipe}
   \frac{\partial ^2 u } {\partial t^2 } + \frac{\partial
   ^2}{\partial r^2 } \left [ E  \frac{\partial ^2 u } {\partial r^2
   } + {C} \frac{\partial ^3 u }{\partial r^2 \partial t }
   \right] + K \frac{\partial^2 u}{\partial t\partial x} =0,
   \hspace{2em} r \in (0,1), \ t > 0,
\end{equation}
where $u(r,t) $ denotes the transverse oscillation at time $t$ and position $r$, and $E$, $C$, $K$ are positive physical constants.
Here both operator coefficients $A_0 =\frac{\partial^2}{\partial r^2} E \frac{\partial^2}{\partial r^2}$ and $D = \frac{\partial^2}{\partial r^2} C \frac{\partial^2}{\partial r^2}+ K \frac{\partial}{\partial r}$ in $L^2(0,1)$
with appropriate domains are fourth order differential operators and hence have the same strength.
We use this example to show that all constants involved in our abstract results may be determined analytically and we establish a new enclosure for the spectrum of this problem.
In particular, we derive a threshold for the damping constant $C$ at which a spectral free strip opens up (see Figures~\ref{fig8}, \ref{fig9} below).

Throughout this paper we use the following notation. For a
closable densely defined linear operator
$S$ in some Banach space $X$ we denote by $\rho(S)$ the resolvent set, by $\sigma_p(S)$
the point spectrum, and by $\sigma_{ap}(S)$ the {\em approximate point
spectrum}, i.e.\ the set of all $\lambda\in \C$ for which there is a sequence
$(x_n)_{n\in \mathbb N}$ in $\dom(S)$ such that
\[
\|x_n\| =1, \quad \|(S-\lambda )x_n\|\rightarrow 0, \quad n \to \infty,\]
see e.g.\ \cite[p.\ 242]{EN}. Clearly, the point spectrum is a
subset of the approximate point spectrum; moreover, the boundary of
the spectrum $\sigma(S)$ belongs to $\sigma_{ap}(S)$,
see e.g.\ \cite[IV \S 1.10]{EN}.

\section{Operator framework}\label{section2}

In this section, we rigorously introduce the operator $\cA$ in \eqref{AA} associated with the second order differential equation \eqref{sys}.
Throughout this paper $H$ is a Hilbert space and we assume the following. 
\ \\

{\bf (A1)} The operator $A_0 : \dom (A_0) \subset H \rightarrow H$ is a self-adjoint and uniformly positive linear operator
on $H$ such that $0$ is in the resolvent set of $A_0$.

\ \\
Assumption (A1) allows us to introduce
Hilbert spaces $ H_{\frac{1}{2}}$ and $ H_{-\frac{1}{2}}$ by means of $A_0$ as follows.
We define
\[
H_{\frac{1}{2}} :=  \dom(A_0^{\frac{1}{2}}), \quad
\| \cdot \|_{\hhalf} \!:= \|A_0^{\frac{1}{2}} \cdot \|_{H},
\]
and we set $H_{-{\frac{1}{2}}}\!:=H_{\frac{1}{2}}^*$, where the duality is taken with
respect to the pivot space~$H$; in other words, $H_{-{\frac{1}{2}}}$ is the completion of $H$ with respect to
\[
\smash{\|z\|_{H_{-{\frac{1}{2}}}}= \|A_0^{-{\frac{1}{2}}}z\|_H.}
\]
If we further define
$
H_1:= \dom(A_0) \mbox{ with the norm }\| \cdot \|_{H_{1}} \!:= \|A_0 \cdot \|_{H}
$,
then $A_0$ may be viewed as a bounded operator $A_0:H_1 \to H$ and extends to a bounded operator $A_0:H_{\frac{1}{2}} \rightarrow H_{-{\frac{1}{2}}}$; in both cases we
keep the notation $A_0$.

If we denote the inner product on $H$ by $\langle\cdot,\cdot\rangle_H$
or $\langle\cdot,\cdot\rangle$ and the duality pairing on $H_{-{\frac{1}{2}}}\times H_{{\frac{1}{2}}}$ by
$\smash{\langle\cdot,\cdot\rangle_{H_{-{\frac{1}{2}}}\times H_{{\frac{1}{2}}}}}$, then,
for $(z',z)^{\top}\in H\times \smash{H_{\frac{1}{2}}}$,
\[
\langle z',z\rangle_{H_{-{\frac{1}{2}}}\times H_{{\frac{1}{2}}}}=\langle
z',z\rangle_H.
\]

{\bf (A2)} The (damping) operator $D: \smash{H_{\frac{1}{2}}} \to \smash{H_{-\frac{1}{2}}}$ is bounded
and the (bounded) operator $A_0^{-\frac 12}D A_0^{-\frac 12}$ is accretive in $H$, i.e.\ 
\[
 \re \langle Dz, z\rangle_{H_{-\frac{1}{2}}\times
  H_{\frac{1}{2}}} \ge 0, \qquad z\in H_{\frac{1}{2}}\,.
\]

{\bf (A3)} The operator $D$ maps the space $H_1=\dom(A_0)$ into $H$.


\begin{rem}
\label{DD}
The bounded operator $D:H_{\half} \to H_{-\half}$ has $H_1$ as a core. If we view $D$ as an operator in $H$ with domain $H_1$, then it is densely defined and accretive by (A2), (A3),
\[
  W(D):= \{ \langle Dg,g \rangle \mid g \in H_1,\ \|g\|=1 \} \subset \{ z\in \C \mid \Re z \ge 0 \},
\]
hence closable by \cite[Theorem~\V.3.2]{K}. In the following, we use the notation $D$ for both~operators.
\end{rem}

%

In the product Hilbert space $H_{\frac{1}{2}}\times H$ we now consider the operator
$\cA:\dom(\cA)\subset H_{\frac{1}{2}}  \times H\rightarrow H_{\frac{1}{2}}  \times H$ given by
\begin{equation}
\label{AA1}
\cA = \begin{mat} 0 & I \\
\!-A_0 & -D\! \end{mat}, \quad
\dom(\cA) = \left\{ \ds \begin{bmatrix} z \\ w \end{bmatrix} \in H_{\frac{1}{2}} \times H_{\frac{1}{2}} \mid A_0 z + D w \in H  \right\} .
\end{equation}

\vspace{0mm}

\begin{prop}
\label{Ainv}
The operator $\cA$ is closed with bounded inverse given~by
\begin{equation}
\label{NaDann}
 \cA^{-1}=\begin{mat} -A_0^{-1}D & -A_0^{-1} \\ I & 0 \end{mat}
\end{equation}
in $H_{\frac{1}{2}}\times H$, $\cA$ generates a $C_0$-semigroup of contractions,
and $H_1 \times H_1$ is a core of $\cA$, i.e.\ 
\[ 
\overline{{\cA}|_{H_1\times H_1}} = \cA.
\]
Moreover, $\cA$ is bounded if and only if $A_0$ is a bounded operator in $H$.
\end{prop}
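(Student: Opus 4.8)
The plan is to prove the four assertions in turn, using the explicit formula \eqref{NaDann} as the backbone. First I would check that the operator $B$ given by the right-hand side of \eqref{NaDann} is bounded on $H_{\frac12}\times H$ by tracing each entry through the scale $H_1\subset H_{\frac12}\subset H\subset H_{-\frac12}$: the operator $A_0^{-1}$ maps $H$ boundedly into $H_1\subset H_{\frac12}$ and maps $H_{-\frac12}$ isometrically onto $H_{\frac12}$, so both $A_0^{-1}D$ and $A_0^{-1}$ are bounded into $H_{\frac12}$, while $I\colon H_{\frac12}\hookrightarrow H$ is bounded by uniform positivity of $A_0$. Solving $\cA(z,w)^\top=(f,g)^\top$ for $(f,g)^\top\in H_{\frac12}\times H$ reduces to $w=f$ and $A_0 z=-g-Df$; the unique solution $(z,w)^\top=B(f,g)^\top$ lies in $\dom(\cA)$ because $A_0 z+Dw=-g\in H$, and injectivity of $\cA$ (immediate from $0\in\rho(A_0)$) then gives $B=\cA^{-1}$. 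Since $\cA^{-1}$ is bounded and everywhere defined, $\cA$ is closed and $0\in\rho(\cA)$.

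For the semigroup statement I would invoke the Lumer--Phillips theorem. The domain is dense, since $H_1\times H_1\subset\dom(\cA)$ by (A3) (so that $A_0 z+Dw\in H$ for $z,w\in H_1$) and $H_1$ is dense in both $H_{\frac12}$ and $H$. For dissipativity I compute, for $x=(z,w)^\top\in\dom(\cA)$,
\[
 \la\cA x,x\ra_{H_{\frac12}\times H}
 =\la w,z\ra_{H_{\frac12}}-\la A_0 z+Dw,w\ra_{H_{-\frac12}\times H_{\frac12}},
\]
and rewrite $\la A_0 z,w\ra_{H_{-\frac12}\times H_{\frac12}}=\la A_0^{\frac12}z,A_0^{\frac12}w\ra_H=\la z,w\ra_{H_{\frac12}}$, so that the two cross terms combine to $\la w,z\ra_{H_{\frac12}}-\la z,w\ra_{H_{\frac12}}=2\i\,\Im\la w,z\ra_{H_{\frac12}}$, which is purely imaginary. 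Hence $\Re\la\cA x,x\ra=-\Re\la Dw,w\ra_{H_{-\frac12}\times H_{\frac12}}\le 0$ by the accretivity assumption (A2), so $\cA$ is dissipative. Because $\rho(\cA)$ is open and contains $0$, it also contains some $\lambda_0>0$, whence $\ran(\lambda_0-\cA)=H_{\frac12}\times H$; Lumer--Phillips then yields the contraction semigroup.

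That $H_1\times H_1$ is a core I would obtain from the standard resolvent criterion: for a closed operator with $0\in\rho(\cA)$, a subspace $\mathcal C\subset\dom(\cA)$ is a core if and only if $\cA(\mathcal C)$ is dense in $H_{\frac12}\times H$. For $(z,w)^\top\in H_1\times H_1$ one has $\cA(z,w)^\top=(w,-A_0 z-Dw)^\top$; fixing $w\in H_1$ and letting $z$ range over $H_1$, the second component runs through all of $H$ (since $A_0\colon H_1\to H$ is onto and $Dw\in H$), whence $\cA(H_1\times H_1)=H_1\times H$, which is dense. Thus $\overline{\cA|_{H_1\times H_1}}=\cA$.

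Finally, for the boundedness equivalence, note that $\dom(\cA)\subset H_{\frac12}\times H_{\frac12}$ by construction. If $\cA$ is bounded then $\dom(\cA)=H_{\frac12}\times H$, which forces $H\subset H_{\frac12}$ and hence $H_{\frac12}=H$ with equivalent norms; by the closed graph theorem $A_0^{\frac12}$, and therefore $A_0$, is bounded. Conversely, if $A_0$ is bounded then $H_{\frac12}=H$ and $D$ is bounded on $H$, so all entries of $\cA$ are bounded and $\cA$ is bounded. The main work is the dissipativity computation, where one must track the duality pairings in the scale $H_{\frac12}\subset H\subset H_{-\frac12}$ carefully to see that the unbounded cross terms cancel; everything else reduces to bounded bookkeeping once the inverse formula \eqref{NaDann} is verified.
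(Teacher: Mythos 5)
Your proof is correct, and in two places it takes a genuinely different (and more self-contained) route than the paper. For the closedness and the inverse formula you do the same thing the paper does: verify that each entry of \eqref{NaDann} is bounded on the scale $H_1\subset H_{\frac12}\subset H\subset H_{-\frac12}$ and solve $\cA(z,w)^{\top}=(f,g)^{\top}$ directly. For the semigroup the paper simply cites the literature, whereas you supply a Lumer--Phillips argument; your dissipativity computation is exactly the identity the paper records later in Remark~\ref{numericalrange}, and the range condition follows, as you say, from $0\in\rho(\cA)$ and openness of the resolvent set, so this is a complete replacement for the citation. For the core property the paper constructs an explicit approximating sequence $z_n:=A_0^{-1}f-A_0^{-1}Dw_n$, $w_n\to w$ in $H_{\frac12}$, while you invoke the abstract criterion that for a closed, boundedly invertible operator a subspace of the domain is a core if and only if its image is dense, and then compute $\cA(H_1\times H_1)=H_1\times H$; these are two packagings of the same mechanism (the paper's sequence is just $\cA^{-1}$ applied to an approximating sequence in the image), but yours is cleaner and transfers more easily to similar situations. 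Your argument for the boundedness equivalence, via $\dom(\cA)=H_{\frac12}\times H$ forcing $H\subset H_{\frac12}$ and the closed graph theorem, is also slightly different from the paper's (which extracts boundedness of $A_0:H_{\frac12}\to H$ from boundedness of $\cA$ directly), but both are sound.
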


\begin{proof}
The formula \eqref{NaDann} for the inverse of $\cA$ is easy to check;
it is also easy to see that all entries therein are bounded operators between the respective Hilbert spaces.
Hence $\cA$ is a closed operator. The semigroup property was shown e.g.\ in \cite{HS0}.

By (A3), we have $H_1 \times H_1 \subset \dom(\cA)$.
Hence $\overline{{\cA}|_{H_1\times H_1}} \subset {\cA}$ since ${\cA}$
is a closed operator. Thus
it remains to be shown that ${\cA}\subset \overline{{\cA}|_{H_1\times
H_1}}$.
Let $(z,w)^{\top} \in \dom ({\cal A})$, i.e.\ $z, w\in H_{\frac 12}$ and $f:=A_0z+Dw\in H$.
Since $H_1$ is dense in $H_{\frac 12}$, there exists a sequence
$(w_n)_{n\in\mathbb N}$ in $H_1$ such that $w_n \!\to\! w$, $n\!\to\!\infty$, in $H_{\frac{1}{2}}$.
If we define $z_n:= A_0^{-1}f-A_0^{-1}Dw_n\in H_1$, $n\in\mathbb N$, then
$z_n\to z$, $n\to \infty$, in $H_{\frac{1}{2}}$ and $A_0z_n+Dw_n=f$ in $H$. This shows that
$(z_n,w_n)^{\top} \in \dom(\cA)$ and $\bigl(\cA (z_n,w_n)^{\top}\bigr)_{n\in\N} = (w_n,-f)^{\top}$ converges in $H_{\frac{1}{2}}\times H$.

Clearly, if $\cA$ is bounded in $H_{\frac{1}{2}}\times H$,
then so is $A_0 : H_{\frac 12}\to H$. This is equivalent \vspace{-1.5mm} to $A_0^{\frac 12}: H \to H$ being bounded which implies that $A_0$ is bounded in $H$.
Vice versa, 
the boundedness of $A_0$ implies that $\cD(A_0) = H$ and $H_{\frac 12} = H = H_{-\frac 12}$ with all norms being equivalent. Then also the entries $I: H \to H_{\frac 12}$ and $D:H\to H$ in $\cA$ are bounded and hence so is $\cA$.
\end{proof}

\begin{rem}\label{spektrum}
Proposition~\rmref{Ainv} implies that $\sigma(\cA)$ is contained in the closed left half-plane and that $0\in \rho(\cA)$. However, otherwise the spectrum of $\cA$ may be quite arbitrary, see {\rm \cite[Example~3.2]{JT}}.
\end{rem}

In the following sections we will establish new, tighter enclosures for the spectrum of $\cA$ in terms of its entries $A_0$ and $D$; particular attention will be paid to the case of sectorial and self-adjoint damping operator $D$.


\section{The numerical range of $\cA$}
\label{section3}

In this section we investigate the numerical range of the operator $\cA$ in \eqref{AA1}, which is defined as
\[
  W(\cA) := \bigl\{ \la \cA x, x \ra_{H_{\frac{1}{2}}\times H}  \mid x \in \dom(\cA),\ \|x\|=1 \bigr\}.
\]
By the Toeplitz-Hausdorff Theorem, $W(\cA)$ is always a convex subset of~$\C$, see \cite[Theorem~\V.3.1]{K},
and it has the so-called spectral inclusion property
\begin{equation}
\label{nr-specincl}
 \sigma_p(\cA) \subset W(\cA), \quad \sigma_{ap}(\cA) \subset \overline{W(\cA)}.
\end{equation}
Since $\cA$ is unbounded in general, additional assumptions are needed to ensure $\sigma(\cA) \subset \overline{W(\cA)}$:
if a component $\Omega$ of $\C\setminus \overline{W(\cA)}$ contains a point of $\rho(\cA)$, then $\Omega \subset \rho(\cA)$,
see \cite[Theorem~\V.3.2]{K}.

\smallskip

The following constants will play an important role throughout this paper, see also \cite{JT,JT2}:
\begin{equation}
\label{bgdm}
\begin{split}
\beta &:= \inf_{z\in H_{\frac{1}{2}}\setminus\{0\}}  \dfrac{\re \langle Dz, z\rangle_{H_{-\frac{1}{2}}\times
  H_{\frac{1}{2}}}}{\|z\|^2} \in [0,\infty),\\
\gamma &:= \sup_{z\in H_{\frac{1}{2}}\setminus\{0\}}  \dfrac{\re \langle Dz, z\rangle_{H_{-\frac{1}{2}}\times
  H_{\frac{1}{2}}}}{\|z\|^2}\in [0,\infty],\\
\delta &:= \inf_{z\in H_{\frac{1}{2}}\setminus\{0\}}  \dfrac{\re \langle Dz, z\rangle_{H_{-\frac{1}{2}}\times
  H_{\frac{1}{2}}}}{\|z\|^2_{\hhalf}}\in [0,\infty),\\
\mu &:= \inf_{z\in H_{\frac{1}{2}}\setminus\{0\}}  \dfrac{\re \langle Dz, z\rangle_{H_{-\frac{1}{2}}\times
  H_{\frac{1}{2}}}}{\|z\| \|z\|_{\hhalf}}\in [0,\infty).
\end{split}
\end{equation}

By (A1), the operator $A_0$ is uniformly positive, i.e.\  there exists a constant $a_0>0$ such that
$\la A_0 z,z\ra \ge a_0^2 \|z\|^2$ for $z\in \cD(A_0)$. In other words,
\begin{equation}
\label{a_0}
\smash{\|z\|_{\hhalf} = \|A_0^{\frac{1}{2}}z\| \geq a_0 \|z\|, \quad z \in H_{\frac{1}{2}}\,;}
\end{equation}
note that one may choose $a_0 = (\min \sigma(A_0))^{\frac 12} = \|\smash{A_0^{-\frac{1}{2}}}\|^{-1}$.
Altogether, we have the following estimates between the constants in \eqref{bgdm}:
\begin{equation}
\label{ineq}
\mu^2 \ge \beta \delta, \quad \gamma \ge \beta \ge a_0 \mu,  \quad \mu \ge {a_0}\delta.
\end{equation}

Note that $\beta>0$ means that $D$ is uniformly accretive as an operator in~$H$ with domain~$H_1$, $\Re W(D) \ge \beta$,
while $\delta>0$ means that $D$ is uniformly accretive relative to $A_0$ in~$H$, i.e.\ the numerical range of the linear operator pencil $L$ in $H$, cf.\ \cite{LMT}, given by $L(\lambda):= D-\lambda A_0$, $\dom (L(\lambda))=H_1$, for $\lambda \in \C$ satisfies $\Re W(L) \ge \delta$.
Note that both $\mu>0$ and $\delta>0$ imply $\beta>0$.

\vspace{1mm}

The following three simple observations will be useful in the following.


\begin{rem}
\label{0lemma}
In the definition \eqref{bgdm} of $\beta$, $\gamma$, $\delta$, $\mu$, the infimum and supremum, respectively, may equivalently be taken over $z\in H_1 \setminus\{0\}$ since $H_1$ is a core for $D$, see Remark \ref{DD}.
\end{rem}

\begin{lem}
\label{0alemma}
If $\gamma<\infty$ and $\mu>0$, 
then $A_0$ is a bounded operator in~$H$ with 
$\|A_0\| \le \frac{\gamma^2}{\mu^2}$;
the same holds if $\gamma<\infty$ and $\delta>0$.
\end{lem}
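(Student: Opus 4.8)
The plan is to extract from the definitions of $\mu$ and $\gamma$ a two-sided inequality that forces a bound of the form $\|z\|_{\hhalf}\le c\,\|z\|$, which in turn makes $A_0^{\frac12}$ everywhere defined and bounded. Concretely, for every $z\in H_{\frac12}\setminus\{0\}$ the definition of $\mu$ in \eqref{bgdm} gives
$\re\langle Dz,z\rangle_{H_{-\frac12}\times H_{\frac12}}\ge \mu\,\|z\|\,\|z\|_{\hhalf}$,
while the definition of $\gamma$ gives
$\re\langle Dz,z\rangle_{H_{-\frac12}\times H_{\frac12}}\le \gamma\,\|z\|^2$.
Chaining these two and dividing by $\mu\|z\|>0$ (legitimate since $\mu>0$) yields $\|z\|_{\hhalf}\le \tfrac{\gamma}{\mu}\,\|z\|$ for all $z\in H_{\frac12}$.

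Next I would translate this into a statement about $A_0$ itself. Since $\|z\|_{\hhalf}=\|A_0^{\frac12}z\|$, the previous bound reads $\|A_0^{\frac12}z\|\le \tfrac{\gamma}{\mu}\|z\|$ on the dense domain $H_{\frac12}=\dom(A_0^{\frac12})$. By (A1) the operator $A_0^{\frac12}$ is self-adjoint, hence closed; a closed operator that is bounded on its dense domain is in fact defined on all of $H$, so $A_0^{\frac12}$ extends to a bounded self-adjoint operator with $\|A_0^{\frac12}\|\le \gamma/\mu$. Self-adjointness then gives $\|A_0\|=\|(A_0^{\frac12})^2\|=\|A_0^{\frac12}\|^2\le \gamma^2/\mu^2$, which is the claimed estimate, and in particular $A_0$ is bounded on $H$.

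For the second hypothesis ($\gamma<\infty$ and $\delta>0$) I would simply reduce to the first case using the elementary inequalities \eqref{ineq}. There one has $\mu\ge a_0\delta$ with $a_0>0$, so $\delta>0$ already forces $\mu>0$, and the conclusion $\|A_0\|\le \gamma^2/\mu^2$ follows from the argument above without change. (Alternatively one can run the same chaining directly with $\delta$, using $\re\langle Dz,z\rangle\ge\delta\,\|z\|_{\hhalf}^2$ to obtain $\|A_0\|\le \gamma/\delta$, but routing through $\mu\ge a_0\delta$ keeps a single uniform bound.)

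The only step requiring genuine care is the passage from a norm bound on the dense domain $H_{\frac12}$ to actual boundedness on all of $H$: this is where closedness of $A_0^{\frac12}$ (guaranteed by self-adjointness) is essential, since a densely defined operator that is bounded on its domain need not extend consistently unless it is closed. Everything else is a direct manipulation of the defining inequalities for the constants in \eqref{bgdm}, and the bound $\mu>0$ (respectively $\mu\ge a_0\delta>0$) is exactly what permits the cancellation that produces the estimate.
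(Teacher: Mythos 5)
Your proof is correct and follows essentially the same route as the paper: both chain the lower bound from the definition of $\mu$ with the upper bound from the definition of $\gamma$ to get $\|A_0^{\frac12}z\|\le\frac{\gamma}{\mu}\|z\|$ on $H_{\frac12}$, and both reduce the case $\delta>0$ to $\mu>0$ via \eqref{ineq}. The only difference is that you spell out the (correct) closedness argument for passing from boundedness on the dense domain to boundedness on all of $H$, which the paper leaves implicit.
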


\begin{proof}
By definition of $\mu$, we have
$\re \la Dz,z\ra \ge \mu \|z\| \smash{\|z\|_{\hhalf}}$ for all $z\in H_1 \setminus \{0\}$ and hence, because $\gamma<\infty$ and $\mu>0$,
\[
  \|A_0^{\frac 12} z\| = \|z\|_{\hhalf} \le \frac 1\mu
  \smash{\frac{\re \langle Dz, z\rangle}{\|z\|^2} \|z\| \le \frac \gamma\mu \|z\|;}
\]
note that $\delta>0$ implies $\mu>0$ by \eqref{ineq}.
\end{proof}

\begin{rem}
\label{numericalrange}
We have $\lambda \in W(\cA)$ if and only if there is $(f, g)^{\top} \!\in \dom(\cA)$,
$\smash{\|f\|^2_{\hhalf}} \!+ \|g\|^2 = 1$, with
\begin{equation}\label{WA}
\begin{split}
\lambda =
\left\la \begin{mat} 0 & I \\
-A_0 & -D \end{mat} \right.&\left. \!\! \begin{vek}f\\g
\end{vek}, \begin{vek}f\\g
\end{vek}\right\ra_{H_{\frac{1}{2}}\times H}\! = \la g,f \ra_{H_{\frac 12}} - \la A_0f\!+\!Dg,g\ra
\\ & =
\la g,f\ra_{\hhalf} - \la A_0 f,g\ra_{H_{-\frac{1}{2}}\times  H_{\frac{1}{2}}} - \la
Dg,g\ra_{H_{-\frac{1}{2}}\times
  H_{\frac{1}{2}}}\\ & = -2 \i \,
\im\la A_0^{\frac{1}{2}} f,  A_0^{\frac{1}{2}} g\ra- \la Dg,g\ra_{H_{-\frac{1}{2}}\times
  H_{\frac{1}{2}}}.
\end{split}
\end{equation}
\end{rem}

\begin{prop}
\label{1lemma}
\begin{itemize}
\item[\rm (i)]
The numerical range $W(\cA)$ of $\cA$ is contained in the closed
left half-plane and
\begin{equation}
\label{WAincl}
W(-D) \cup \{0\} \subset W(\cA).
\end{equation}
\item[{\rm (ii)}] The real part $\Re W(\cA)$ satisfies
\begin{equation}\label{oberwolfach}
\inf \,( \Re W(\cA) )= -\gamma, \quad \max \,(\Re W(\cA)) = 0;
\end{equation}
in particular, $\Re W(\cA)$
is bounded if and only if $\gamma < \infty$.
\item[\rm (iii)]
The imaginary part $\Im W(\cA)$ 
is bounded if and only if $A_0$ is a bounded operator in $H$;
in this case, also $D$ is bounded and
\[ \mathopen| \Im W(\cA) | \le \|A_0^{\frac 12}\| + \mathopen\|\Im D\|. \]
\end{itemize}
\end{prop}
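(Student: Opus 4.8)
The plan is to prove the three assertions of Proposition~\ref{1lemma} by direct computation starting from the key formula in Remark~\ref{numericalrange}, namely
\[
  \lambda = -2\i\,\im\la A_0^{\frac12}f,A_0^{\frac12}g\ra - \la Dg,g\ra_{H_{-\frac12}\times H_{\frac12}},
\]
valid for $(f,g)^\top\in\dom(\cA)$ with $\|f\|_{\hhalf}^2+\|g\|^2=1$. The first term is purely imaginary, so it contributes only to $\im\lambda$, while the real part of $\lambda$ equals $-\re\la Dg,g\ra$. This decomposition is the engine for all three parts.

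For (i), taking the real part gives $\re\lambda=-\re\la Dg,g\ra_{H_{-\frac12}\times H_{\frac12}}\le 0$ by the accretivity assumption (A2), which proves that $W(\cA)$ lies in the closed left half-plane. For the inclusion \eqref{WAincl}, I would exhibit suitable vectors: choosing $f=0$ and $g$ a unit vector in $H$ (approximated in $H_{\half}$ using that $H_1$ is a core, cf.\ Remark~\ref{DD}) makes the imaginary cross-term vanish and yields $\lambda=-\la Dg,g\ra$, so every point of $W(-D)$ arises; choosing $g=0$ and $f$ a unit vector in $H_{\half}$ yields $\lambda=0$. Hence $W(-D)\cup\{0\}\subset W(\cA)$.

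For (ii), the identity $\re\lambda=-\re\la Dg,g\ra$ together with the definition \eqref{bgdm} of $\gamma$ as the supremum of $\re\la Dz,z\ra/\|z\|^2$ gives the two endpoints. The value $0$ is attained (take $g=0$), and by ranging $g$ over unit vectors and scaling I would recover $-\gamma$ as the infimum of $\re W(\cA)$; the supremum of $\re\la Dg,g\ra$ over $g$ with $\|g\|\le 1$ is exactly $\gamma$, since $\re\la Dg,g\ra\ge 0$ lets us always increase the Rayleigh quotient by normalizing $\|g\|=1$. This yields \eqref{oberwolfach}, and boundedness of $\Re W(\cA)$ is equivalent to $\gamma<\infty$.

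For (iii), the imaginary part is $\im\lambda=-2\,\im\la A_0^{\frac12}f,A_0^{\frac12}g\ra-\im\la Dg,g\ra$. If $A_0$ is bounded in $H$, then by Proposition~\ref{Ainv} $\cA$ is bounded, so $D$ is bounded and $H_{\half}=H=H_{-\half}$ with equivalent norms; the cross-term is bounded by $\|A_0^{\frac12}f\|\,\|A_0^{\frac12}g\|\le\frac12\|A_0^{\frac12}\|(\|f\|_{\hhalf}^2+\|g\|^2)\le\frac12\|A_0^{\frac12}\|$ (or directly via Cauchy--Schwarz), and $|\im\la Dg,g\ra|\le\|\Im D\|$, giving the stated bound. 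The main obstacle, and the only nonroutine direction, is the converse in (iii): showing that if $A_0$ is \emph{unbounded} then $\Im W(\cA)$ is unbounded. Here I would fix any $g\ne 0$ in $H_1$ and choose $f=\i\,t\,A_0^{-1}g/\|A_0^{-1}g\|_{\hhalf}$ (or a similarly aligned phase) so that $\im\la A_0^{\frac12}f,A_0^{\frac12}g\ra=\im\la A_0 f,g\ra$ grows without bound as $A_0$ fails to be bounded, exploiting that $f,g$ can be taken along directions where $\|A_0^{\frac12}g\|/\|g\|$ is arbitrarily large; after renormalizing $(f,g)^\top$ to the unit sphere one checks the cross-term still dominates $\im\la Dg,g\ra$, forcing $|\im\lambda|\to\infty$. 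Care must be taken that the chosen $(f,g)^\top$ lies in $\dom(\cA)$ and that the normalization does not kill the growth, which is where the argument requires the most attention.
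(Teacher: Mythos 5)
Parts (i) and (ii), together with the bounded direction and the final estimate in part (iii), follow the paper's proof essentially step for step and are correct. (One cosmetic point: no approximation argument is needed for \eqref{WAincl}, since $W(-D)$ is by definition taken over $g\in H_1$, and for such $g$ the vector $(0,g)^\top$ already lies in $\dom(\cA)$ by (A3).)

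The gap is in the converse direction of (iii). Your stated mechanism --- choose $f$ along $\i\,A_0^{-1}g$ and then check that ``the cross-term still dominates $\im\la Dg,g\ra$'' --- fails in general. Under the normalization $\|f\|^2_{\hhalf}+\|g\|^2=1$ the cross-term satisfies $|\la A_0^{\frac12}f,A_0^{\frac12}g\ra|\le\|f\|_{\hhalf}\|g\|_{\hhalf}\le\|g\|_{\hhalf}$, whereas $|\im\la Dg,g\ra|$ can be of order $\|g\|^2_{\hhalf}$ (e.g.\ $D=\i A_0$ is admissible under (A2), (A3)); since the cross-term can only blow up along sequences with $\|g_n\|_{\hhalf}\to\infty$, the $D$-term may well dominate, and it may even cancel the cross-term exactly. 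Concretely, take $D=\varepsilon A_0-2\i A_0^{\frac12}$ with $\varepsilon>0$ (accretive, since $\la A_0^{\frac12}g,g\ra$ is real) and let $g$ be an eigenvector of $A_0$ with eigenvalue $a^2$, scaled so that $\|g\|=\frac1{\sqrt2}$; your normalized choice becomes $f=\i a^{-1}g$, which gives $-2\,\im\la A_0f,g\ra=-a$ and $-\im\la Dg,g\ra=+a$, hence $\im\lambda=0$ for every $a$. The correct mechanism, which the paper uses and which you mention only parenthetically, is not domination but sign alignment: one multiplies $f$ by the phase factor $\im\la Dg,g\ra/|\im\la Dg,g\ra|$ (or $1$ if this quantity vanishes), so that the two contributions to $\im\lambda$ have the same sign and $|\im\lambda|\ge 2\la g,A_0^{\frac12}g\ra\to\infty$ without any comparison of their magnitudes. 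As written, your argument for the unboundedness of $\Im W(\cA)$ does not go through.
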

\vspace{-2mm plus 1mm}

\begin{proof}
(i) By Remark~\ref{numericalrange}, assumption (A2) ensures that $W(\cA)$ is contained in the closed left half-plane.

\begingroup
\thickmuskip=4mu plus 4mu
If we choose $g=0$ and $f\in H_1$ with $\smash{\|f\|_{\hhalf}}\rule[-1ex]{0pt}{0pt}=1$, then $(f,0)^{\top} \mkern-2mu\in \dom(\cA)$ and
Remark~\ref{numericalrange} shows that $0\in W(\cA)$. If we choose $f=0$ and $g\in H_1$ with $\|g\|=1$, then $(0,g)^{\top} \mkern-2mu\in \dom(\cA)$ and
Remark~\ref{numericalrange} shows that $-\la Dg,g\ra\in W(\cA)$.
\endgroup

(ii) The second equality in \eqref{oberwolfach} is immediate from (i).
The inclusion \eqref{WAincl} implies that $\inf \,( \Re W(\cA) )\le -\gamma$,
cf.\ Remark~\ref{0lemma};
the opposite inequality
follows since for $\lambda \in W(\cA)$, by Remark~\ref{numericalrange}, 
there exists $g\in H_{\frac 12}$ with $\|g\|\le 1$ such that either $\lambda=0 \ge - \gamma$ if $g=0$ or else, if $g\ne 0$,
\[
\re \lambda =  -\|g\|^2 \frac{\re \langle Dg, g\rangle_{H_{-\frac{1}{2}}\times
H_{\frac{1}{2}}}}{\|g\|^2} \geq \, -\!
\sup_{z\in H_{\frac{1}{2}}\setminus\{0\}} \! \frac{\re
\langle Dz, z\rangle_{H_{-\frac{1}{2}}\times
H_{\frac{1}{2}}}}{\|z\|^2} =-\gamma.
\]

(iii) If $A_0$ is a bounded operator in $H$, then
$\cA$ is a bounded operator in $H_{\frac 12}\times H$ by Proposition~\ref{Ainv} and so $W(\cA)$ is bounded.

Vice versa, suppose that $A_0$, and hence \smash{$A_0^{\frac 12}$}, is unbounded. Then, since $H_1$ is a core for \smash{$A_0^{\frac 12}$},
there exists a sequence $(g_n)_{n\in \mathbb N}$ in $H_1$ with $\|g_n\|=\frac{1}{\sqrt{2}}$ such that $\la
A_0^{\frac{1}{2}}g_n,g_n\ra \to \infty$, $n\to \infty$. For
$n\in \mathbb N$, we set 
\[
f_n:= \left\{ \begin{array}{cl} \i \,A_0^{-\frac{1}{2}}g_n & \text{ if }\ \im\la Dg_n,g_n\ra =0, \\[1mm]
\i \ds \frac{\im \la Dg_n,g_n\ra}{\left|\im \la Dg_n,g_n\ra \right|}\,A_0^{-\frac{1}{2}}g_n & \text{ otherwise.}
\end{array} \right.
\]
Obviously, $\smash{\|f_n\|^2_{\hhalf}}\! + \|g_n\|^2 = 1$ and, by (A3), $(f_n, g_n)^{\top} \in H_1\times H_1 \subset \dom(\cA)$. From Remark~\ref{numericalrange} we deduce
\[
\im \left\la \cA \begin{bmatrix}f_n \\g_n\end{bmatrix}, \begin{bmatrix}f_n \\g_n\end{bmatrix}\right\ra_{\!H_{\frac{1}{2}}\times H\!}
= \left(\!-2 \la g_n,  A_0^{\frac{1}{2}} g_n\ra- \left|\im \la
Dg_n,g_n\ra\right|\right)\frac{\im \la Dg_n,g_n\ra}{\left|\im \la
Dg_n,g_n\ra \right|}
\]
if $\im\la Dg_n,g_n\ra \not=0$ and
\[ \im \left\la \cA \begin{bmatrix}f_n \\g_n\end{bmatrix}, \begin{bmatrix}f_n \\g_n\end{bmatrix}\right\ra_{\!H_{\frac{1}{2}}\times H\!}
= -2 \la g_n,  A_0^{\frac{1}{2}} g_n\ra\]
if $\im\la Dg_n,g_n\ra =0$,
which shows that the imaginary part of $W(\cA)$ is unbounded.

The last claim follows from Remark~\ref{numericalrange} if we use that $A_0$ bounded implies $D$ bounded and that in \eqref{WA} we can estimate
$| 2 \la A_0^{\frac 12} f, A_0^{\frac 12} g \ra | \le 2 \|A_0^{\frac 12}\| \|f\|_{H_{\frac 12}} \|g\| \le \|A_0^{\frac 12}\| (\|f\|_{H_{\frac 12}}^2\! + \|g\|^2 ) = \|A_0^{\frac 12}\|$,
and $|\im \la Dg,g \ra_{H_{-\frac 12},H_{\frac 12}}| \le \mathopen\|\Im D\| \|g\|^2 \le \mathopen\|\Im D\|$.
\end{proof}

The following example shows that the numerical range $W(\cA)$ may indeed fill the entire closed left half-plane.

\begin{example}
Let $H=\ell^2(\mathbb N)$,  $\mathbb N=\{1,2,\ldots\}$. The operator
\begin{eqnarray*}
&&\dom (A_0) := \bigl\{ (x_n)_{n\in\N} \in \ell^2(\mathbb N) \mid (nx_n)_{n\in\N} \in \ell^2(\mathbb N)\bigr\}, \\
&&A_0 (x_n)_{n\in\N}:= (nx_n)_{n\in\N}, \quad (x_n)_{n\in\N}\in \dom (A_0),
\end{eqnarray*}
satisfies {\rm (A1)} and $H_{\frac{1}{2}}=\left\{ (x_n)_{n\in\N}\in \ell^2(\mathbb N) \mid (\sqrt{n}\,x_n)_{n\in\N} \in \ell^2(\mathbb N)\right\}$.
Then the operator
\[
D(x_n)_{n\in\N} := \bigl((1+(-1)^n)nx_n\bigr)_{n\in\N}, \quad (x_n)_{n\in\N}\in H_{\frac{1}{2}}\,,
\]
satisfies {\rm (A2)} and {\rm (A3)}. As usual, we denote by $e_j := (\delta_{ij})_{i\in\N}$, $j\in \mathbb N$, the sequence of unit vectors in $\ell^2(\mathbb N)$.
Then, clearly $W(-D) = (-\infty,0] $ and hence $(-\infty,0] \subset W(\cA)$ by \eqref{WAincl}.
Moreover, for $n \in \mathbb N$,
\[
\|2^{-\frac 12} (2n+1)^{-\frac{1}{2}}\,e_{2n+1}\|_{\hhalf} = \frac{1}{\sqrt{2}}\,, \qquad
\mathopen\|\pm \i 2^{-\frac{1}{2}} \,e_{2n+1}\| = \frac{1}{\sqrt{2}}\,,
\]
and, by \eqref{WA} since $De_{2n+1} =0$,
\begin{eqnarray*}
&&\left\la \cA \begin{vek}2^{-\frac 12} (2n+1)^{-\frac{1}{2}}\,e_{2n+1}\\
\pm \i 2^{-\frac{1}{2}} \,e_{2n+1}
\end{vek} , \begin{vek} 2^{-\frac 12} (2n+1)^{-\frac{1}{2}}\, e_{2n+1}\\
\pm \i 2^{-\frac{1}{2}} \,e_{2n+1}
\end{vek}\right\ra_{H_{\frac{1}{2}}\times H}  \\
&& \ =
\frac{\pm \i}{\sqrt{2n+1}} \left\la \sqrt{2n+1}\,e_{2n+1},
\sqrt{2n+1}\,e_{2n+1}\right\ra =(\pm \i) \sqrt{2n+1}\ \longrightarrow \ \pm\i\infty, \quad n\to\infty.
\end{eqnarray*}
Altogether, the convexity of $W(\cA)$, see e.g.\ \cite[Theorem~\V.3.1]{K},
implies that $W(\cA)$ is the entire closed left half-plane, 
\[
  W(\cA) = \{ \lambda \in \C \mid \re \lambda \le 0 \}.
\]
\end{example}

\vspace{1mm}

Due to the spectral inclusion property \eqref{nr-specincl}, estimates for the numerical range yield estimates for the approximate point spectrum.
For the spectrum of $\cA$, we obtain the following.

\begin{cor}
\label{may4}
The spectrum of $\cA$ satisfies the following inclusions:
\begin{enumerate}
\item[{\rm (i)}]
$\sigma(\cA) \subset \{z \in \C \setminus \{0\} \mid \re z \le 0\}$;
\begingroup
\item[{\rm (ii)}]
\thickmuskip=3mu plus 2mu minus 1mu
if $\,\gamma < \infty$ and there is $\lambda_0 \in \rho(\cA)$ with $\re \lambda_0 < - \gamma$, then $\sigma(\cA) \subset \bigl\{z \in \C \setminus \{0\} \mid -\gamma \le \re z \le 0 \bigr\}$;
\endgroup
\item[{\rm (iii)}] if $\,\gamma < \infty$ and $\mu>0$, then 
$
\sigma(\cA) \subset \Bigl\{z \in \C \setminus \{0\} \mid - \gamma \le \re z \le 0, \ |\im z| \le \dfrac \gamma\mu + \mathopen\|\Im D\| \Bigr\}
$,
and the same inclusion holds if $\,\gamma < \infty$ and $\delta>0$.
\end{enumerate}
\end{cor}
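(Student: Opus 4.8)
The plan is to feed the three structural facts about $W(\cA)$ from Proposition~\ref{1lemma} into the spectral inclusion property \eqref{nr-specincl}, together with the component principle recalled in Section~\ref{section3}: if a connected component $\Omega$ of $\C\setminus\overline{W(\cA)}$ meets $\rho(\cA)$, then $\Omega\subset\rho(\cA)$. For (i) there is essentially nothing to do beyond Remark~\ref{spektrum}: Proposition~\ref{Ainv} shows that $\cA$ generates a contraction semigroup, whence $\sigma(\cA)\subset\{\re z\le 0\}$, and that $\cA$ has a bounded inverse, whence $0\in\rho(\cA)$; these two statements together are exactly the claimed inclusion. Equivalently, one may note $\overline{W(\cA)}\subset\{\re z\le 0\}$ by Proposition~\ref{1lemma}(i) and that the right half-plane is a single component of the complement meeting $\rho(\cA)$.

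For (ii) I would exploit Proposition~\ref{1lemma}(ii): since $\gamma<\infty$, equation \eqref{oberwolfach} gives $\inf(\Re W(\cA))=-\gamma$, so by continuity of the real part $\overline{W(\cA)}\subset\{z:\re z\ge-\gamma\}$, and hence the open half-plane $\{z:\re z<-\gamma\}$ is disjoint from $\overline{W(\cA)}$. Being convex, this half-plane is connected and therefore lies inside a single component $\Omega$ of $\C\setminus\overline{W(\cA)}$; by hypothesis it contains $\lambda_0\in\rho(\cA)$, so the component principle forces $\{z:\re z<-\gamma\}\subset\Omega\subset\rho(\cA)$. Thus $\sigma(\cA)\subset\{\re z\ge-\gamma\}$, and intersecting with (i) yields the strip $\{z\in\C\setminus\{0\}:-\gamma\le\re z\le 0\}$.

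For (iii) the key step is to reduce to the bounded case. Under $\gamma<\infty$ and $\mu>0$ (and, since $\delta>0$ implies $\mu>0$ by \eqref{ineq}, also under $\gamma<\infty$ and $\delta>0$) Lemma~\ref{0alemma} yields that $A_0$ is bounded with $\|A_0\|\le\gamma^2/\mu^2$, hence $\|A_0^{\frac12}\|=\|A_0\|^{\frac12}\le\gamma/\mu$; by Proposition~\ref{Ainv} the operator $\cA$ is then bounded, so $\sigma(\cA)\subset\overline{W(\cA)}$ holds automatically. It remains only to bound $\overline{W(\cA)}$: its real part lies in $[-\gamma,0]$ by Proposition~\ref{1lemma}(ii), while Proposition~\ref{1lemma}(iii) gives $|\Im W(\cA)|\le\|A_0^{\frac12}\|+\|\Im D\|\le\gamma/\mu+\|\Im D\|$. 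Combining these bounds with $0\in\rho(\cA)$ from (i) produces the claimed box.

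The only genuinely delicate point is the component argument in (ii): one must be certain that the given resolvent point $\lambda_0$ lies in the \emph{same} component of $\C\setminus\overline{W(\cA)}$ as the whole half-plane $\{\re z<-\gamma\}$. I expect this to be handled cleanly by the observation that a half-plane is connected and therefore cannot be split among several components, so that no case distinction on the shape of $W(\cA)$ (bounded versus unbounded imaginary part) is required. Everything else is bookkeeping with the constants in \eqref{bgdm} and the elementary fact that the spectrum of a bounded operator is contained in the closure of its numerical range.
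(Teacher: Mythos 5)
Your proposal is correct and follows essentially the same route as the paper: reduce everything to $\sigma(\cA)\subset\overline{W(\cA)}$ via the component principle from \cite[Theorem~V.3.2]{K} (using $0\in\rho(\cA)$ for (i), the hypothesis on $\lambda_0$ for (ii), and boundedness of $\cA$ via Lemma~\ref{0alemma} for (iii)), then invoke the bounds on $\Re W(\cA)$ and $\Im W(\cA)$ from Proposition~\ref{1lemma}. The only cosmetic difference is that for (i) you lead with the contraction-semigroup argument of Remark~\ref{spektrum} rather than the numerical-range component argument, but you supply the latter as well, and both appear in the paper.
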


\begin{proof}
\thickmuskip=5mu plus 3mu minus 1mu
By Proposition~\ref{Ainv}, we know $0\notin\sigma(\cA)$. Thus,
by Proposition~\ref{1lemma} and Lemma~\ref{0alemma}, in all claims it suffices to prove that $\sigma(\cA) \subset \overline{W(\cA)}$.
As $\overline{W(\cA)}$ is convex, the set $\C \setminus \overline{W(\cA)}$ consists of one or two components.
By \cite[Theorem~\V.3.2]{K}, 
if a component $\Omega$ of $\C \setminus \overline{W(\cA)}$ contains a point
$\lambda_0\in \rho(\cA)$, then $\Omega \subset \rho(\cA)$.
Since $0 \in \rho(\cA)$ by Proposition~\ref{Ainv} and $\rho(\cA)$ is open, we always have
$\{z \in \C \mid \re z > 0\} \cap \rho(\cA) \ne \emptyset$ and thus~(i) follows.
The assumption in (ii) ensures that also $\{z \in \C \mid \re z < -\gamma \} \cap \rho(\cA) \ne \emptyset$.
By Lemma~\ref{0alemma} the assumptions in (iii) guarantee that $\cA$ is bounded which implies that $\sigma(\cA) \subset \overline{W(\cA)}$ and hence the claim follows.
\end{proof}

\begin{rem}\label{spektrum2}
Corollary~\rmref{may4} provides an alternative proof for the fact that $\sigma(\cA)$ is contained in the closed left half-plane, see Remark~\rmref{spektrum}.
\end{rem}

If the operator $D$ has some sectoriality property, then the
numerical range of $\cA$ is contained in some parabolic region, as the following result shows.
We point out that the numerical range cannot lie in a sector with corner $0$:
recall from Proposition~\ref{1lemma}~(i) that $0 \in W(A)$.
Thus $0$ being a corner of $W(A)$ would imply $0\in \sigma(\cA)$, cf.\ \cite{LMT},
a contradiction to Proposition~\ref{Ainv}.

\begin{prop}
\label{SpekInclusion}
Assume there exists $k\geq0$ such that
\begin{equation}
\label{Dietendorf}
|\im \langle Dz,z \rangle_{H_{\frac{1}{2}}\times H_{-\frac{1}{2}}}|\leq k \,\re \langle Dz,z
\rangle_{H_{\frac{1}{2}}\times H_{-\frac{1}{2}}}, \quad z\in H_{\frac{1}{2}}\,.
\vspace{-2.5mm}
\end{equation}
If $\delta>0$, then \vsqueeze{2mm}
\begin{align}
\label{verylast}
\sigma(\cA) \subset \overline{W(\cA)} \subset \left\{\lambda \in \C \mid -\gamma\le \re \lambda \leq 0, \
|\im\lambda| \leq k |\re \lambda |  + 2 \sqrt{\frac 1\delta|\re \lambda |}\,
 \right\}.
\end{align}
\end{prop}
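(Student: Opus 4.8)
The plan is to prove the two inclusions in \eqref{verylast} separately, doing the analytic estimate $\overline{W(\cA)} \subset \{\cdots\}$ first, since it is the heart of the statement, and then deducing the spectral inclusion $\sigma(\cA) \subset \overline{W(\cA)}$ from it together with Proposition~\ref{Ainv} and Lemma~\ref{0alemma}. For the estimate I would start from the representation in Remark~\ref{numericalrange}: every $\lambda \in W(\cA)$ can be written, for some $(f,g)^\top \in \dom(\cA)$ with $\|f\|_{\hhalf}^2 + \|g\|^2 = 1$, as
\[
\lambda = -2\i\,\im\la A_0^{\frac12}f, A_0^{\frac12}g\ra - \la Dg, g\ra_{H_{-\frac12}\times H_{\frac12}},
\]
so that $\re\lambda = -\re\la Dg,g\ra$ and $\im\lambda = -2\,\im\la A_0^{\frac12}f, A_0^{\frac12}g\ra - \im\la Dg,g\ra$.

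For the real part, accretivity (A2) gives $\re\lambda \le 0$, while the definition of $\gamma$ together with $\|g\|\le 1$ gives $\re\lambda = -\re\la Dg,g\ra \ge -\gamma\|g\|^2 \ge -\gamma$; hence $-\gamma \le \re\lambda \le 0$ and, in particular, $|\re\lambda| = \re\la Dg,g\ra$. For the imaginary part I would use the triangle inequality and bound the two summands. By the sectoriality hypothesis \eqref{Dietendorf}, $|\im\la Dg,g\ra| \le k\,\re\la Dg,g\ra = k|\re\lambda|$. By Cauchy--Schwarz, $2|\im\la A_0^{\frac12}f, A_0^{\frac12}g\ra| \le 2\|f\|_{\hhalf}\|g\|_{\hhalf} \le 2\|g\|_{\hhalf}$, using $\|f\|_{\hhalf}\le 1$. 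Finally the definition of $\delta$ yields $\|g\|_{\hhalf}^2 \le \frac1\delta\re\la Dg,g\ra = \frac1\delta|\re\lambda|$, so the cross term is at most $2\sqrt{|\re\lambda|/\delta}$. Adding the two contributions proves $|\im\lambda| \le k|\re\lambda| + 2\sqrt{|\re\lambda|/\delta}$, and since the enclosing set is closed the same inclusion holds for $\overline{W(\cA)}$.

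For the first inclusion I would invoke $0\in\rho(\cA)$ (Proposition~\ref{Ainv}) and the criterion \cite[Theorem~\V.3.2]{K}: it suffices that every component of $\C\setminus\overline{W(\cA)}$ meets $\rho(\cA)$. As $\overline{W(\cA)}$ is convex this complement has at most two components, and the one containing the open right half-plane always meets $\rho(\cA)$, since $\rho(\cA)$ is an open neighbourhood of $0$. The only issue is a possible second, ``left'' component, and here $\delta>0$ is decisive. If moreover $\gamma<\infty$, then $A_0$ is bounded by Lemma~\ref{0alemma}, hence $\cA$ is bounded by Proposition~\ref{Ainv} and $\sigma(\cA)\subset\overline{W(\cA)}$ holds automatically. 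Otherwise $\gamma=\infty$, so the region in \eqref{verylast} has no left wall, and by the estimate just proved the recession directions of the convex, unbounded set $\overline{W(\cA)}$ are confined to the pointed sector $\{\re z\le 0,\ |\im z|\le k|\re z|\}$; a planar closed convex set whose recession cone is not a full line has connected complement, so $\C\setminus\overline{W(\cA)}$ is a single component and the argument closes.

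The main obstacle is not the estimate, which is a short computation once Remark~\ref{numericalrange} is available, but the spectral inclusion: one must rule out a left component of $\C\setminus\overline{W(\cA)}$ that could otherwise lie outside $\rho(\cA)$. The dichotomy forced by $\delta>0$ (either $A_0$ is bounded, so $\cA$ is bounded, or $\gamma=\infty$, so the enclosure is left-open and does not separate the plane) is exactly what removes the need for the extra resolvent-point assumption used in Corollary~\ref{may4}(ii).
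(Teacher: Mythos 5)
Your proof is correct and follows essentially the same route as the paper: the identical estimate for $\re\lambda$ and $\im\lambda$ via Remark~\ref{numericalrange}, Cauchy--Schwarz and the definitions of $\gamma$, $\delta$, followed by the criterion of Kato together with $0\in\rho(\cA)$ and the connectedness of $\C\setminus\overline{W(\cA)}$. The only (harmless) deviation is your case split for the connectedness: for $\gamma<\infty$ you detour through Lemma~\ref{0alemma} and the boundedness of $\cA$, whereas the paper treats both cases uniformly by observing that the enclosing region contains no line (it is bounded when $\gamma<\infty$); your recession-cone argument for $\gamma=\infty$ just makes explicit what the paper asserts from convexity.
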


\begin{proof}
Proposition~\ref{1lemma}~(ii) implies  $W(\cA) \subset \bigl\{\lambda \in \C \mid -\gamma\le \re  \lambda \leq 0 \bigr\}$.
By Remark~\ref{numericalrange}, we have $\lambda \in W (\cA)$ if and only if there exists
$\rule{0pt}{2.35ex}(f, g)^{\top} \in \dom(\cA)$ with $\smash{\|f\|^2_{\hhalf}}\! + \|g\|^2= 1$
such that
\begin{align}
\label{Senat0}
\re \lambda &= -\re \la Dg,g\ra_{H_{-\frac{1}{2}}\times H_{\frac{1}{2}}}, \\
\label{Senat}
\im \lambda &= \smash{-2\, \im\la f, g\ra_{\hhalf}\!
               - \im \la Dg,g\ra_{H_{-\frac{1}{2}}\times H_{\frac{1}{2}}}.}
               \vphantom{\hhalf}
\end{align}
If $\delta\!>\!0$, then $\|g\|^2_{\hhalf} \!\!\le\! \frac 1\delta\re \la Dg,g\ra_{H_{-\frac{1}{2}}\times
  H_{\frac{1}{2}}}$. Using this estimate, $\|f\|_{\hhalf}\!\!\leq\! 1$, \eqref{Senat0} and \eqref{Senat}, we~find
\begin{align*}
   |\im \lambda |
   &\leq 2\|f\|_{\hhalf} \|g\|_{\hhalf} + |\im \la Dg,g\ra_{H_{-\frac{1}{2}}\times  H_{\frac{1}{2}}}|\\
   &\leq 2\sqrt{\frac 1\delta \re \la Dg,g\ra_{H_{-\frac{1}{2}}\times H_{\frac{1}{2}}}} + k  \, \re \la Dg,g\ra_{H_{-\frac{1}{2}}\times H_{\frac{1}{2}}}\\
   &=   2\sqrt{\frac 1\delta |\re \lambda |} + k   |\re \lambda |,
  \end{align*}
which proves the inclusion for  $\overline{W(\cA)}$ in \eqref{verylast}. This and
the convexity of the set $\overline{W(\cA)}$ ensures that
the complement $\C\setminus \overline{W(\cA)}$ has only one component,
in both cases $\gamma=\infty$ and $\gamma<\infty$.
Now the inclusion $\sigma(\cA) \subset \overline{W(\cA)}$ in \eqref{verylast} follows from \cite[Theorem~\V.3.2]{K} in the same way as the inclusion in Corollary~\ref{may4} since we know $0\in \rho(\cA)$.
\end{proof}

\section{The quadratic numerical range (QNR) of $\cA$}\label{section4}

In this section we establish new spectral enclosures for the operator $\cA$ in \eqref{AA1} by means of the so-called quadratic numerical range.
The latter is defined for operators in a product Hilbert space ${\mathcal H}_1 \times {\mathcal H}_2$ that admit a matrix representation with respect to some decomposition of the space,
i.e.\ that have a domain of the form
$\mathcal D_1\times \mathcal D_2$ with dense subspaces $\mathcal D_i$ of ${\mathcal H}_i$, $i=1,2$.

In general, such a decomposition of the domain of the operator $\cA$ in \eqref{AA1} requires stronger assumptions on $D$; e.g.\ if $D$ maps $H_{\frac 12}$ even into~$H$, then $\dom(\cA) = H_1 \times H_{\frac 12}$.
Under the weaker assumptions (A2), (A3), $H_1\!\times\! H_1 \subset \dom(\cA)$ is a core of $\cA$ by Proposition~\ref{Ainv} and so the quadratic numerical range of the restriction
$\cA|_{H_1\times H_1}$ is defined as follows, see \cite[Definition 2.5.1]{T}.

\begin{defn}\label{QNR}
For $(f, g)^{\top} \in H_1\times H_1 \subset \dom(\cA)$, $f,g \ne 0$, let
\begin{equation*}\label{Afg2}
 \cA_{f,g} := \begin{mat} 0 &
\dfrac{\la g,f\ra_{\hhalf}}{\|f\|_{\hhalf} \|g\|} \\[2ex]
-\dfrac{\la A_0f,g\ra }{\|f\|_{\hhalf}
 \|g\|}& -\dfrac{\la Dg,g\ra}{\|g\|^2} \end{mat}\in M_2(\C).
\end{equation*}
The set of all eigenvalues of all these $2\!\times\!2$ matrices $\cA_{f,g}$,
\begin{equation*}
 W^2(\cA|_{H_1\times H_1}) :=
 \bigcup_{\substack{(f,g)^{\top} \in\mkern2mu H_1\times H_1, \\[0.2ex]
 f,g \ne 0}}
  \sigma_p(\cA_{f,g}) =
 \bigcup_{\substack{(f,g)^{\top} \in\mkern2mu H_1\times H_1, \\[0.2ex]
 \|f\|_{\hhalf} \mkern-2mu = \|g\|= 1}}
  \sigma_p(\cA_{f,g}),
\end{equation*}
is called the {\em quadratic numerical range} of the operator matrix $\cA|_{H_1\times H_1}$ in $H_{\frac 12} \times H$.
\end{defn}

\begin{rem}\label{remark}
The following equivalent description of $W^2(\cA|_{H_1\times H_1})$ is useful, see {\rm \cite[Proposition 1.1.3]{T}}.
For $(f, g)^{\top} \in  H_1\times H_1$ with $f,g \ne 0$, set
\begin{align}
\Delta (f,g;\lambda)
&:=\|f\|_{\hhalf}^2\|g\|^2\det(\cA_{f,g}\!-\!\lambda) 
= \|f\|_{\hhalf}^2\|g\|^2
\left( \lambda^2 +\lambda \frac{\langle Dg,g\rangle}{\|g\|^2} +
\frac{|\langle f,g\rangle_{\hhalf}|^2}{\|f\|^2_{\hhalf} \|g\|^2}\right).\label{Afg3}
\vspace{-2mm}
\end{align}
Then
\begin{equation}\label{QNR2}
 W^2(\cA|_{H_1\times H_1})
 = \bigl\{\lambda \in \C \mid \exists\, (f,g)^{\!\top} \! \in
 H_1\!\times\! H_1,\ f,g \ne 0:  \Delta(f,g;\lambda) =0\bigr\}.
\end{equation}
\end{rem}

\smallskip

The quadratic numerical range is either connected or consists of two components;
thus it is in general not convex, and even its components need not be so (see e.g.\ \cite{LMMT}, \cite[p.~4/5]{T}).

An important property of the quadratic numerical range is that it is always contained in the
numerical range. Together with Proposition~\ref{1lemma}, we obtain: 

\begin{prop}
\label{qnrincl}
\[
W^2(\cA|_{H_1\times H_1}) \subset W(\cA|_{H_1\times H_1}) \subset W(\cA) \subset \{ z\in \C \mid -\gamma \le \re z \le 0\}.
\]
\end{prop}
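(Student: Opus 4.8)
The plan is to prove the three inclusions in \verb|Proposition~\ref{qnrincl}| from right to left, since the rightmost one is already established. The final inclusion $W(\cA) \subset \{z\in\C \mid -\gamma \le \re z \le 0\}$ is exactly the content of \verb|Proposition~\ref{1lemma}~(ii)| (combined with the sign statement in part (i)), so nothing new is needed there. Likewise $W(\cA|_{H_1\times H_1}) \subset W(\cA)$ is immediate from the definition of the numerical range together with the inclusion $H_1 \times H_1 \subset \dom(\cA)$ from \verb|(A3)|: every unit vector $(f,g)^\top \in H_1\times H_1$ (normalized in $H_{\frac12}\times H$) is an admissible vector in the definition of $W(\cA)$, so the Rayleigh quotients producing $W(\cA|_{H_1\times H_1})$ form a subset of those producing $W(\cA)$.

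The only inclusion requiring real work is the first one, $W^2(\cA|_{H_1\times H_1}) \subset W(\cA|_{H_1\times H_1})$. This is the general fact that the quadratic numerical range is contained in the numerical range, specialized to our operator matrix; I would cite \verb|\cite[Proposition~1.1.3]{T}| or reprove it directly. The direct argument goes as follows. Fix $(f,g)^\top \in H_1\times H_1$ with $f,g\ne 0$ and $\|f\|_{\hhalf}=\|g\|=1$, and let $\lambda$ be an eigenvalue of the $2\times 2$ matrix $\cA_{f,g}$ with normalized eigenvector $(\alpha,\beta)^\top \in \C^2$, $|\alpha|^2+|\beta|^2=1$. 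The plan is to lift this eigenvector to a genuine test vector $x := (\alpha f, \beta g)^\top \in H_1\times H_1$ for the full operator $\cA$ and to show that the corresponding entry of $\cA_{f,g}$ is precisely the compression of $\cA$ to $\operatorname{span}\{(f,0)^\top,(0,g)^\top\}$ in the $H_{\frac12}\times H$ inner product. Concretely, using the formula for the entries of $\cA_{f,g}$ and the normalizations $\|f\|_{\hhalf}=\|g\|=1$, one checks that $\cA_{f,g}$ is exactly the Gram-matrix representation of $P\cA P$ restricted to that two-dimensional subspace, where $P$ is the orthogonal projection onto it.

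From there the argument is the standard $2\times 2$ compression computation. Since $\cA_{f,g}$ represents the compressed operator in an orthonormal-by-construction basis, its eigenvalue $\lambda$ is a Rayleigh quotient of the compression: $\lambda = \langle \cA_{f,g}(\alpha,\beta)^\top,(\alpha,\beta)^\top\rangle_{\C^2} = \langle \cA x, x\rangle_{H_{\frac12}\times H}$ with $\|x\|^2 = |\alpha|^2\|f\|_{\hhalf}^2 + |\beta|^2\|g\|^2 = 1$. Hence $\lambda\in W(\cA|_{H_1\times H_1})$, which is the desired inclusion. I expect the main obstacle to be bookkeeping rather than conceptual: one must verify carefully that the off-diagonal entries of $\cA_{f,g}$ reproduce the mixed inner products $\langle \cA(0,g)^\top,(f,0)^\top\rangle$ and $\langle \cA(f,0)^\top,(0,g)^\top\rangle$ correctly, keeping track of which pairing ($H_{\frac12}$-inner product versus the $H_{-\frac12}\times H_{\frac12}$ duality) appears in each slot, exactly as spelled out in \verb|Remark~\ref{numericalrange}|. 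Once the identification of $\cA_{f,g}$ with the compression is in place, the rest is the elementary fact that eigenvalues of a $2\times 2$ compression are Rayleigh quotients and hence lie in the numerical range.
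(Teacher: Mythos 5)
Your proposal is correct and follows essentially the same route as the paper: the paper handles the second and third inclusions exactly as you do and settles the first by citing \cite[Theorem~2.5.3]{T}, whose standard compression argument is precisely what you spell out (note that \cite[Proposition~1.1.3]{T}, which you offer as an alternative citation, is the determinant characterization used in Remark~\ref{remark}, so Theorem~2.5.3 is the right reference). Your identification of $\cA_{f,g}$ with the compression of $\cA$ to $\operatorname{span}\{(f,0)^\top,(0,g)^\top\}$ with respect to the orthonormal system $\{(f,0)^\top,(0,g)^\top\}$ in $H_{\frac12}\times H$ checks out entry by entry, and the Rayleigh-quotient step is the elementary fact you describe.
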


\begin{proof}
The first inclusion was proved in \cite[Theorem~2.5.3]{T}, the second one is obvious, and the third one was shown in Proposition~\ref{1lemma}~(ii).
\end{proof}

In general, the quadratic numerical range may be considerably smaller than the numerical range.
The next proposition shows that the extreme points of their real parts are the same.

\begin{prop}
\label{vonC}
If\/ $\dim H > 1$, then
\begin{equation}
\label{diagincl}
W(-D) \cup \{0\}
\subset W^2(\cA|_{H_1\times H_1}) \cap W(\cA)
\end{equation}
and hence
\begin{equation}
\label{oberwolfach-qnr}
\inf \bigl( \Re W^2(\cA|_{H_1\times H_1}) \bigr) = -\gamma, \quad
\max \bigl( \Re W^2(\cA|_{H_1\times H_1}) \bigr) = 0.
\end{equation}
\end{prop}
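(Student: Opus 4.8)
The plan is to exploit the fact that each $2\times 2$ matrix $\cA_{f,g}$ degenerates to a lower triangular matrix as soon as $f$ is chosen $H_{\half}$-orthogonal to $g$, so that its eigenvalues are simply its diagonal entries $0$ and $-\langle Dg,g\rangle/\|g\|^2$. Since Proposition~\ref{1lemma}~(i) already yields $W(-D)\cup\{0\}\subset W(\cA)$, the genuinely new content of \eqref{diagincl} is the inclusion $W(-D)\cup\{0\}\subset W^2(\cA|_{H_1\times H_1})$, and \eqref{oberwolfach-qnr} will then be read off from it together with Proposition~\ref{qnrincl}.

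First I would fix an arbitrary $g\in H_1$ with $\|g\|=1$ and construct a nonzero $f\in H_1$ with $\langle f,g\rangle_{\hhalf}=0$. The map $f\mapsto\langle f,g\rangle_{\hhalf}$ is a linear form on $H_1$ that is not identically zero, as it takes the value $\|g\|_{\hhalf}^2>0$ at $g$; hence its kernel has codimension one in $H_1$. Since $H_1=\dom(A_0)$ is dense in $H$ and $\dim H>1$, we have $\dim H_1>1$, so this kernel is nontrivial and the desired $f$ exists. For such a pair $(f,g)$ the off-diagonal entry $\langle g,f\rangle_{\hhalf}/(\|f\|_{\hhalf}\|g\|)$ of $\cA_{f,g}$ vanishes; equivalently, by \eqref{Afg3} with $\|g\|=1$, $\Delta(f,g;\lambda)=\|f\|_{\hhalf}^2\,\lambda\bigl(\lambda+\langle Dg,g\rangle\bigr)$. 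Reading off the two roots shows that both $0$ and $-\langle Dg,g\rangle$ lie in $W^2(\cA|_{H_1\times H_1})$. Letting $g$ vary over the unit sphere of $H_1$ and recalling from Remark~\ref{DD} that $W(-D)=\{-\langle Dg,g\rangle\mid g\in H_1,\ \|g\|=1\}$, I obtain $W(-D)\cup\{0\}\subset W^2(\cA|_{H_1\times H_1})$, and intersecting with Proposition~\ref{1lemma}~(i) gives \eqref{diagincl}.

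It remains to derive \eqref{oberwolfach-qnr}. The bounds $\Re z\le 0$ and $\Re z\ge-\gamma$ for every $z\in W^2(\cA|_{H_1\times H_1})$ are immediate from the chain $W^2(\cA|_{H_1\times H_1})\subset W(\cA)\subset\{z\mid-\gamma\le\Re z\le 0\}$ of Proposition~\ref{qnrincl}. For the matching extremal values I would use \eqref{diagincl}: since $0\in W^2(\cA|_{H_1\times H_1})$, the maximum of $\Re W^2(\cA|_{H_1\times H_1})$ equals $0$; and since $\gamma=\sup\{\Re\langle Dg,g\rangle/\|g\|^2\mid g\in H_1\setminus\{0\}\}$ by Remark~\ref{0lemma}, the infimum of $\Re W(-D)$ equals $-\gamma$, which together with $W(-D)\subset W^2(\cA|_{H_1\times H_1})$ and the lower bound $-\gamma$ forces $\inf\bigl(\Re W^2(\cA|_{H_1\times H_1})\bigr)=-\gamma$.

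The single point requiring care is the dimension count that produces a nonzero $H_{\half}$-orthogonal partner $f$ of $g$; this is precisely where $\dim H>1$ is used and is the only obstruction, since for $\dim H=1$ no such $f$ exists and the triangularization breaks down. Everything else is routine manipulation of the explicit $2\times2$ matrix and of the enclosures already established.
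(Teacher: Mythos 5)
Your proof is correct and follows essentially the same route as the paper: the paper simply invokes \cite[Theorem~2.5.4]{T} for the inclusion of the numerical ranges of the diagonal entries in the quadratic numerical range (valid since both component spaces have dimension $>1$), and your orthogonal-partner construction --- choosing $f\in H_1\setminus\{0\}$ with $\langle f,g\rangle_{H_{1/2}}=0$ so that $\cA_{f,g}$ becomes triangular --- is precisely the standard proof of that cited result. The derivation of \eqref{oberwolfach-qnr} from \eqref{diagincl} together with Proposition~\ref{qnrincl} likewise matches the paper's argument.
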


\begin{proof}
Since $\dim H_{\frac 12}, \dim H > 1$, the numerical ranges of the diagonal elements of $\cA|_{H_1\times H_1}$,
i.e.\ of the zero operator $0$ in $H_{\frac 12}$ and of $D:H\to H$ with
 $\dom (D) = H_1$,
 are contained in $W^2(\cA|_{H_1\times H_1})$ by \cite[Theorem~2.5.4]{T}. This together with Proposition~\ref{1lemma}~(i) proves \eqref{diagincl}.

The claims in \eqref{oberwolfach-qnr} follow from \eqref{diagincl}, Proposition~\ref{qnrincl} and Proposition~\ref{1lemma}~(ii).
\end{proof}

\section{The spectral inclusion property of the QNR}\label{section5}

In this section we establish the spectral inclusion property of $W^2(\cA|_{H_1\times H_1})$ under our standard assumptions (A1)--(A3).
To obtain inclusions for the spectrum of $\cA$, we use that
$\cA= \overline{\cA|_{H_1\times H_1}}$ by Proposition~\ref{Ainv} and hence, see e.g.\ \cite[Lemma 2.5.16]{T},
\begin{equation}
\label{sigma-core}
  \sigma_p(\cA) \subset \sigma_{ap}(\cA|_{H_1\times H_1}), \quad  \sigma_{ap}(\cA) = \sigma_{ap}(\cA|_{H_1\times H_1}).
\end{equation}

\begin{thm}
\label{Karl} We have
\begin{alignat}{2}
\label{sig-incl1}
\sigma_{p}(\cA|_{H_1\times H_1}) &\subset W^2(\cA|_{H_1\times H_1}), & \quad
\sigma_{ap}(\cA|_{H_1\times H_1}) &\subset \overline{W^2(\cA|_{H_1\times H_1})}, \\[-7mm]
\intertext{and \vspace{-3mm} hence}
\label{sig-incl2}
\sigma_{p}(\cA) &\subset \overline{W^2(\cA|_{H_1\times H_1})}, &\quad
\sigma_{ap}(\cA) &\subset \overline{W^2(\cA|_{H_1\times H_1})}.
\end{alignat}
\end{thm}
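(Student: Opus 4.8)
The plan is to establish the two inclusions in \eqref{sig-incl1} directly at the level of the operator matrix $\cA|_{H_1\times H_1}$, and then deduce \eqref{sig-incl2} via the core relation \eqref{sigma-core}. The heart of the matter is the first inclusion for $\sigma_p$: I would take an eigenvalue $\lambda\in\sigma_p(\cA|_{H_1\times H_1})$ with eigenvector $(f,g)^\top\in H_1\times H_1$, $(f,g)^\top\ne 0$, so that
\[
  g = \lambda f, \qquad -A_0 f - D g = \lambda g.
\]
The goal is to exhibit this same $\lambda$ as an eigenvalue of one of the $2\times 2$ matrices $\cA_{f',g'}$ from Definition~\ref{QNR}, i.e.\ to find $(f',g')^\top$ with $\Delta(f',g';\lambda)=0$ using the characterization \eqref{QNR2} in Remark~\ref{remark}. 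The natural choice is $(f',g')=(f,g)$ itself, provided both components are nonzero; one pairs the first eigenvector equation against $f$ in $\langle\cdot,\cdot\rangle_{\hhalf}$ and the second against $g$ in $\langle\cdot,\cdot\rangle$ to recover exactly the coefficients appearing in $\Delta(f,g;\lambda)$, and then checks that $\Delta(f,g;\lambda)=0$. The main obstacle here is the degenerate case: if $f=0$ or $g=0$, the matrix $\cA_{f,g}$ is not defined, so these cases must be treated separately. From $g=\lambda f$ one sees $f=0 \Leftrightarrow g=0$ is impossible for a nonzero eigenvector unless $\lambda=0$; but $0\in\rho(\cA)$ by Proposition~\ref{Ainv}, so $\lambda\ne 0$ and both $f,g$ are nonzero, removing the obstruction.

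For the approximate-point-spectrum inclusion, I would run an approximate version of the same argument. Given $\lambda\in\sigma_{ap}(\cA|_{H_1\times H_1})$, take a sequence $(f_n,g_n)^\top\in H_1\times H_1$ with $\|f_n\|_{\hhalf}^2+\|g_n\|^2=1$ and $\|(\cA|_{H_1\times H_1}-\lambda)(f_n,g_n)^\top\|\to 0$. Writing out the two component residuals
\[
  r_n := g_n - \lambda f_n \ \to\ 0 \ \text{ in } H_{\hhalf}, \qquad
  s_n := -A_0 f_n - D g_n - \lambda g_n \ \to\ 0 \ \text{ in } H,
\]
I would pair these against $f_n$ and $g_n$ respectively, obtaining $\Delta(f_n,g_n;\lambda)=o(1)$ after normalization. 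The idea is that $\lambda$ is then within $o(1)$ of an eigenvalue of $\cA_{f_n,g_n}$, placing $\lambda$ in $\overline{W^2(\cA|_{H_1\times H_1})}$. The delicate point, and the second obstacle, is again degeneracy: one of the normalizing norms $\|f_n\|_{\hhalf}$ or $\|g_n\|$ may tend to $0$ along the sequence, in which case $\cA_{f_n,g_n}$ is ill-defined. Here I would argue that since $g_n-\lambda f_n\to 0$ and $\lambda\ne 0$ (again because $0\in\rho(\cA)$), the two quantities $\|f_n\|_{\hhalf}$ and $\|g_n\|$ are comparable asymptotically, so neither can collapse to $0$ while the total norm stays $1$; a subsequence argument then extracts a uniform lower bound on both, after which the estimate $\Delta(f_n,g_n;\lambda)\to 0$ yields the claim. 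This degeneracy handling is where I expect most of the technical work to lie, and it is precisely the step that uses $0\in\rho(\cA)$ in an essential way.

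Finally, \eqref{sig-incl2} follows almost immediately. By \eqref{sigma-core} we have $\sigma_{ap}(\cA)=\sigma_{ap}(\cA|_{H_1\times H_1})$, so the second inclusion in \eqref{sig-incl2} is identical to the second inclusion in \eqref{sig-incl1}. For the first inclusion in \eqref{sig-incl2}, \eqref{sigma-core} gives $\sigma_p(\cA)\subset\sigma_{ap}(\cA|_{H_1\times H_1})$, which combined with the $\sigma_{ap}$-inclusion in \eqref{sig-incl1} lands in $\overline{W^2(\cA|_{H_1\times H_1})}$. Thus no new work is needed for \eqref{sig-incl2} beyond invoking the core identity from Proposition~\ref{Ainv} and the general lemma cited for \eqref{sigma-core}. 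Overall, the structure is: prove the eigenvalue case by a direct $\Delta$-computation, bootstrap to the approximate case by the same computation with controlled errors while ruling out norm degeneration using $\lambda\ne 0$, and then transfer to $\cA$ through the core relation.
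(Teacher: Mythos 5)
Your overall architecture coincides with the paper's: prove \eqref{sig-incl1} at the level of the matrix $\cA|_{H_1\times H_1}$ and transfer to $\cA$ via \eqref{sigma-core}. Your point-spectrum argument is correct and is essentially the standard proof of the result the paper simply cites from \cite[Theorem~2.5.9]{T}: pairing the two eigenvector equations against $f$ and $g$ makes both rows of $\cA_{f,g}-\lambda$ proportional to $(-c,c)$, so $\Delta(f,g;\lambda)=0$, and $0\in\rho(\cA)$ rules out the degenerate eigenvector. The deduction of \eqref{sig-incl2} is also exactly as in the paper.

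The gap is in the approximate-point-spectrum step, precisely at the place you flag as delicate. From $g_n-\lambda f_n\to 0$ in $\hhalf$ you may conclude $\|g_n\|_{\hhalf}\sim|\lambda|\,\|f_n\|_{\hhalf}$, and via $\|z\|_{\hhalf}\ge a_0\|z\|$ this gives the \emph{one-sided} implication $\|f_n\|_{\hhalf}\to 0\Rightarrow\|g_n\|\to 0$, which (with the normalization) shows $\|f_n\|_{\hhalf}$ is bounded below --- this is the paper's argument that $a=\lim\|f_n\|_{\hhalf}^2>0$. But the norms are \emph{not} comparable in the other direction: when $A_0$ is unbounded there is no estimate $\|g_n\|\ge c\,\|g_n\|_{\hhalf}$, so $\|g_n\|\to 0$ while $\|f_n\|_{\hhalf}\to 1$ cannot be excluded (the paper pointedly proves only $a>0$ and never claims $b=\lim\|g_n\|^2>0$). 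In that degenerate case your conclusion breaks down: $\Delta(f_n,g_n;\lambda)\to 0$ no longer implies that $\lambda$ is within $o(1)$ of a root of the \emph{monic} polynomial $\det(\cA_{f_n,g_n}-z)=\Delta(f_n,g_n;z)/(\|f_n\|_{\hhalf}^2\|g_n\|^2)$, because the normalizing factor tends to $0$ as well. The paper's fix is to avoid normalizing: one shows that all coefficients of the unnormalized quadratics $\Delta(f_n,g_n;\cdot)$ converge, so that $\Delta(f_n,g_n;\cdot)\to\Delta(\cdot)$ uniformly on compact sets, where $\Delta(\lambda)=0$ and $\Delta\not\equiv 0$ because $\lambda a\ne 0$ (even if $b=0$, in which case the limit polynomial drops to degree one). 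Hurwitz' theorem then produces roots $z_{n,1}$ of $\Delta(f_n,g_n;\cdot)$ with $z_{n,1}\to\lambda$, and these lie in $W^2(\cA|_{H_1\times H_1})$. You would need to replace your comparability claim by this (or an equivalent) device to close the argument.
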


\begin{proof}
It suffices to prove the inclusions \eqref{sig-incl1}; the inclusions \eqref{sig-incl2} follow from \eqref{sig-incl1} by means of~\eqref{sigma-core}.

The inclusion of the point spectrum in \eqref{sig-incl1} was proved in \cite[Theorem~2.5.9]{T}.
To prove the inclusion of the approximate point spectrum, let $\lambda \in \sigma_{ap}(\cA|_{H_1\times H_1})=\sigma_{ap}(\cA)$.
Then, by Proposition~\ref{Ainv}, $\lambda \ne 0$, $\re \lambda \leq 0$, and there exists a sequence
$((f_n,g_n)^\top)_{n\in\N}$ in  $H_1\times H_1$
with
\begin{equation*}\label{folge1}
\left\|\begin{bmatrix} f_n\\g_n \end{bmatrix} 
\right\|_{H_{\frac{1}{2}} \times H}
 =1, \quad 
\quad \lim_{n \to \infty}
\left\|(\cA-\lambda) \begin{bmatrix} f_n\\g_n \end{bmatrix} 
\right\|_{H_{\frac{1}{2}} \times H} =0.
\end{equation*}
Then we have \vsqueeze{2mm}
\begin{align}
\label{folge0}
\|f_n\|^2_{\hhalf} + \|g_n\|^2 &=1 \\[-7mm]
\intertext{\vspace{-4mm}and}
\label{folge2}
\|g_n-\lambda f_n\|_{\hhalf} &\to 0, \\
\label{folge3}
\|A_0 f_n + Dg_n + \lambda g_n\| &\to 0, \quad n \to \infty.
\end{align}
Without loss of generality, we may assume that
\[
  \smash{ a  := \lim_{n\to\infty} \|f_n\|^2_{\hhalf}}
\]
exists. Then $ b  := \lim_{n\to\infty} \|g_n\|^2 = 1- a $, by~\eqref{folge0}.
If $ a =0$, then \eqref{folge2} and \eqref{a_0} imply that $ b =0$,
a contradiction to $ b =1- a $.
Hence we have $ a >0$.

Now we consider the sequence of polynomials
\begin{equation}\label{DickDelta}
\Delta( f_n, g_n;z) = \det\!
\begin{mat}
\!\! -z \la  f_n, f_n\ra_{\hhalf} & \la g_n, f_n\ra_{\hhalf} \!\!  \\[2mm]
\!\! - \la A_0  f_n, g_n\ra  & -\la D g_n, g_n\ra - z \la  g_n,  g_n\ra \!\!
\end{mat}, \quad z\in\C, \ n\in\N.
\end{equation}
By \eqref{folge2} we obtain
\begin{equation}\label{Oberhof2}
  \lim_{n\to\infty} \la g_n, f_n\ra_{\hhalf} =
  \lim_{n\to\infty} \la \lambda f_n, f_n\ra_{\hhalf} = \lambda a .
\end{equation}
It follows that $\lim_{n\to\infty} \la A_0 f_n, g_n \ra = \overline{\lambda} a $.
Note that $(g_n)_{n\in\N}$ is bounded in $H$ by~\eqref{folge0}.
Thus, using \eqref{folge3} and the definitions of $ a , b $, we deduce that
\begin{equation}\label{Oberhof5}
  \lim_{n\to\infty} \la Dg_n, g_n \ra =
  -\lim_{n\to\infty} \la A_0 f_n + \lambda g_n, g_n \ra =
  -\overline{\lambda} a  - \lambda b .
\end{equation}

Then, by \eqref{DickDelta}, \eqref{Oberhof2} and \eqref{Oberhof5}, it follows that
\[
\Delta( f_n, g_n;z) \to \det\!
\begin{mat}
\!\! -z  a  & \lambda a  \!\!  \\[0.5mm]
\!\! -\overline{\lambda} a  & \overline{\lambda} a  + \lambda b  - z b  \!\!
\end{mat}
=: \Delta(z), \quad n\to\infty,
\]
uniformly for $z$ in com\-pact subsets of $\C$. It is easy to see that $\Delta(\lambda) = 0$
and $\Delta \not\equiv 0$ since $\lambda a  \ne 0$. Hence, by Hur\-witz' theorem
(see e.g.\ \cite[Theorem~VII.2.5]{Con2}), for every $\varepsilon > 0$
there exists $N\mkern-1.5mu \in\N$ with the property that, for $n\ge N$,
the quadratic polynomial $\Delta(f_n, g_n;z)$ has a zero
$z_{n,1}\in\C$ with $|z_{n,1} - \lambda| < \varepsilon$.
Since $z_{n,1} \in  W^2(\cA|_{H_1\times H_1})$, it follows that
$\lambda \in \overline{W^2(\cA|_{H_1\times H_1})}$.
\end{proof}

\begin{prop}\label{spezial}
If, in addition to the assumptions {\rm (A2)}, {\rm (A3)}, the operator $D$ maps the space $H_{\frac{1}{2}}$ into $H$, then
\vsqueeze{2mm}
\begin{equation}
\label{ZZ2}
\sigma_p(\cA) = \sigma_p(\cA|_{H_1\times H_1})  \subset W^2(\cA|_{H_1\times H_1}).
\end{equation}
\end{prop}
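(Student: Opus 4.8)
The plan is to observe that the inclusion $\sigma_p(\cA|_{H_1\times H_1}) \subset W^2(\cA|_{H_1\times H_1})$ claimed in \eqref{ZZ2} is already furnished by the point-spectrum part of Theorem~\ref{Karl}; hence the only new content is the \emph{equality} $\sigma_p(\cA) = \sigma_p(\cA|_{H_1\times H_1})$, which I would prove as two inclusions. One of them is for free: since $\cA|_{H_1\times H_1}$ is a restriction of $\cA$, every eigenpair of the former is an eigenpair of the latter, giving $\sigma_p(\cA|_{H_1\times H_1}) \subset \sigma_p(\cA)$.

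For the reverse inclusion I would first use the extra hypothesis to pin down the domain. If $D$ maps $H_{\frac 12}$ into $H$, then $Dw \in H$ for every $w \in H_{\frac 12}$, so the defining condition $A_0 z + Dw \in H$ in \eqref{AA1} collapses to $A_0 z \in H$, that is $z \in H_1$; consequently $\dom(\cA) = H_1 \times H_{\frac 12}$. The decisive gain is that the first component of any vector in $\dom(\cA)$ now lies automatically in $H_1$.

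Next I would examine a putative eigenpair. Taking $\lambda \in \sigma_p(\cA)$ with eigenvector $(f,g)^\top \in \dom(\cA) = H_1 \times H_{\frac 12}$, the first row of $\cA (f,g)^\top = \lambda (f,g)^\top$ reads $g = \lambda f$ in $H_{\frac 12}$. Since $f \in H_1$ and $H_1$ is a subspace, this forces $g = \lambda f \in H_1$ as well, so $(f,g)^\top \in H_1 \times H_1 = \dom(\cA|_{H_1\times H_1})$. As $\cA|_{H_1\times H_1}$ coincides with $\cA$ there, $(f,g)^\top$ is an eigenvector of $\cA|_{H_1\times H_1}$ at $\lambda$, whence $\lambda \in \sigma_p(\cA|_{H_1\times H_1})$. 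This yields $\sigma_p(\cA) \subset \sigma_p(\cA|_{H_1\times H_1})$ and completes \eqref{ZZ2}.

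I do not expect a real obstacle here: the argument is essentially a bootstrapping of regularity through the single relation $g = \lambda f$. The one point that needs care is the domain identification $\dom(\cA) = H_1 \times H_{\frac 12}$, since this is exactly where the additional assumption on $D$ is used; without it the first component $f$ of an eigenvector is only known to lie in $H_{\frac 12}$, the relation $g = \lambda f$ can no longer be lifted into $H_1$, and the equality of the two point spectra need not survive.
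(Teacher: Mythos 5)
Your argument is correct and follows essentially the same route as the paper: reduce to the identity $\sigma_p(\cA)=\sigma_p(\cA|_{H_1\times H_1})$ via Theorem~\ref{Karl}, identify $\dom(\cA)=H_1\times H_{\frac12}$ from the extra hypothesis on $D$, and bootstrap $g=\lambda f\in H_1$ from the first row of the eigenvalue equation. The only difference is that you spell out the trivial inclusion $\sigma_p(\cA|_{H_1\times H_1})\subset\sigma_p(\cA)$ and the domain computation, which the paper leaves implicit.
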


\begin{proof}
By Theorem~\ref{Karl}, we only have to prove the first identity.
If $D$ maps $H_{\frac{1}{2}}$ into $H$, we have $\dom(\cA) = H_1\times H_{\frac{1}{2}}$.
Since an eigenvector $(f,g)^{\top} \in \dom(\cA)= H_1\times H_{\frac{1}{2}}$ of $\cA$ at an eigenvalue 
$\lambda \in \sigma_p(\cA)$ satisfies
\vsqueeze{1mm}
\[
-\lambda f + g =0,
\]
we see that also $g\in H_1$, and $\sigma_p(\cA) = \sigma_p(\cA|_{H_1\times H_1})$ follows.
\end{proof}

\begin{rem}
The stronger assumption in Proposition~\rmref{spezial} is satisfied if e.g.\ $D=A_0^\theta$ for some $\theta \in (-\infty, \frac{1}{2}]$.
\end{rem}

\smallskip

The following inclusion of the spectrum is immediate from
Theorem~\ref{Karl}.

\begin{thm}
\label{Spektrum}
If a component $\Omega$ of $\C \setminus \overline{W^2(\cA|_{H_1\times
H_1}})$ contains a point $\lambda_0\in \rho(\cA)$, then $\Omega
\subset \rho(\cA)$; in particular, if every component of $\mathbb
C\setminus \overline{W^2(\cA|_{H_1\times H_1}})$ contains a point $\lambda_0\in
\rho(\cA)$, then
\[
\sigma(\cA) \subset \overline{W^2(\cA|_{H_1\times H_1}}).
\]
\end{thm}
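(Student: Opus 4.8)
The plan is to deduce this directly from the spectral inclusion for the approximate point spectrum established in Theorem~\ref{Karl}, together with the standard fact, recalled in the introduction, that the topological boundary of the spectrum consists of approximate eigenvalues. The argument is purely topological and requires no further analysis of $\cA$ or of $W^2(\cA|_{H_1\times H_1})$.

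First I would record that $\partial\sigma(\cA) \subset \sigma_{ap}(\cA)$, and hence by Theorem~\ref{Karl},
\[
  \partial\sigma(\cA) \subset \sigma_{ap}(\cA) \subset \overline{W^2(\cA|_{H_1\times H_1})}.
\]
Since a component $\Omega$ of $\C \setminus \overline{W^2(\cA|_{H_1\times H_1})}$ is by construction disjoint from $\overline{W^2(\cA|_{H_1\times H_1})}$, this yields $\Omega \cap \partial\sigma(\cA) = \emptyset$; that is, $\Omega$ cannot meet the boundary of the spectrum.

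Next I would argue by connectedness. Assume $\Omega$ contains a point $\lambda_0 \in \rho(\cA)$ but is not entirely contained in $\rho(\cA)$, so that $\Omega \cap \sigma(\cA) \ne \emptyset$. Because $\Omega \cap \partial\sigma(\cA) = \emptyset$, we have $\Omega \cap \sigma(\cA) = \Omega \cap \operatorname{int}\sigma(\cA)$, and the open connected set $\Omega$ splits as the disjoint union of the two open sets $\Omega \cap \operatorname{int}\sigma(\cA)$ and $\Omega \cap \rho(\cA)$. Both are nonempty (the first by the assumption $\Omega \cap \sigma(\cA) \ne \emptyset$, the second since $\lambda_0 \in \Omega \cap \rho(\cA)$), contradicting the connectedness of $\Omega$. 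Hence $\Omega \subset \rho(\cA)$, which is the first assertion.

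The ``in particular'' statement then follows immediately: if every component of $\C \setminus \overline{W^2(\cA|_{H_1\times H_1})}$ meets $\rho(\cA)$, then by the first part every such component is contained in $\rho(\cA)$, whence $\C \setminus \overline{W^2(\cA|_{H_1\times H_1})} \subset \rho(\cA)$, i.e.\ $\sigma(\cA) \subset \overline{W^2(\cA|_{H_1\times H_1})}$. I do not expect any genuine obstacle here, as the statement is a routine consequence of Theorem~\ref{Karl}; the only point deserving a little care is the topological decomposition, which relies on separating $\sigma(\cA)$ into its interior and boundary and on the openness of $\rho(\cA)$.
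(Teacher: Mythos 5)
Your proposal is correct and follows essentially the same route as the paper: the paper's proof is a one-line appeal to Theorem~\ref{Karl} together with the fact that $\partial\sigma(\cA)\subset\sigma_{ap}(\cA)$, and your argument simply makes the implicit connectedness step (splitting $\Omega$ into the disjoint open sets $\Omega\cap\operatorname{int}\sigma(\cA)$ and $\Omega\cap\rho(\cA)$) explicit. No issues.
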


\begin{proof}
The claim follows from Theorem~\ref{Karl} and the fact that the boundary of
the spectrum $\sigma(\cA)$ belongs to $\sigma_{ap}(\cA)$,
see e.g.\ \cite[IV \S 1.10]{EN}.
Alternatively, it follows from Theorem~\ref{Karl} and the fact that the mapping $\lambda \mapsto \dim \ran (\cA-\lambda)^\perp$ is locally constant, see \cite[Theorem~\V.3.2]{K}.
\end{proof}

\section{Uniformly accretive and sectorial damping:\\
estimates for QNR and spectrum
}
\label{section6}

In this section and the next we show how special properties of the damping operator $D$ such as uniform accretivity and sectoriality are reflected in the quadratic numerical range $W^2(\cA|_{H_1\times H_1})$.
As a result we obtain new bounds on the spectrum of~$\cA$ which improve the bounds by the numerical range,
see Proposition~\ref{SpekInclusion}, considerably.

In particular, we show that the spectrum may have a gap around $\re \lambda = -\frac\beta 2$ if $\delta>0$;
in this case, $D$ is uniformly accretive with $\inf\,(\Re W(D)) = \beta \ge a_0^2 \delta > 0$, see \eqref{bgdm} and~\eqref{ineq}.
Note that the spectral free strip has to lie between $-\beta$ and $0$ since $W(-D) \subset W^2(\cA|_{H_1\times H_1})$ by~\eqref{diagincl}.
We also show that, unlike the numerical range, $W^2(\cA|_{H_1\times H_1})$ may lie in a sector with corner $0$ even though 
$0\,\notin\sigma(\cA)$ since the zero operator on the diagonal of $\cA$ has $0$ in its spectrum, cf.\ \cite[Theorem~3.1]{LMT}.

\begin{thm}
\label{hendrik}
Suppose that $\delta>0$ and hence $\beta>0$, so that $D$ is uniformly accretive. Then,
\[
 \sigma(\cA) \subset 
 \bigl\{ \lambda \in \C \mid \re\lambda < 0, \
 |\re\lambda| \notin I_0, \
 |\im \lambda| \le h_0(|\re\lambda|) \bigr\}
\]
where $I_0$ is a $($possibly empty$)$ interval centred at \smash{$\frac \beta 2$}, given by
\vsqueeze{1mm}
\begin{equation}
\label{I0}
 I_0  := \begin{cases}
                \hspace{2.7cm}\emptyset & \mbox{ if } \ \beta\delta \le 4,\\
								\biggl( \frac \beta 2 \Bigl( 1 - \sqrt{1 \!-\! \frac 4{\beta\delta}}\, \Bigr),
								       \frac \beta 2 \Bigl( 1 + \sqrt{1 \!-\! \frac 4{\beta\delta}}\, \Bigr) \biggr)  &
								\mbox{ if } \ \beta\delta > 4,			
                \end{cases}
\end{equation}
and
\[
 h_0(t) := \begin{cases}
 \displaystyle \sqrt{\frac \beta\delta \, \frac{t}{\beta-t}-t^2}, & \ 0 \le t < \beta, \
 t \notin I_0,
 \\
 \hspace{10mm} \infty, & \ \beta \le t < \infty;
\end{cases}
\]
in particular, if $\,\beta\delta > 4$, then $\cA$ has a spectral free strip around $\re \lambda = - \frac \beta 2$,
\[
 \sigma(\cA) \cap \bigl\{\lambda \in \C \mid
 |\re\lambda| \in I_0
\bigr\} = \emptyset.
\]
If $\gamma < \infty$ and there is a $\lambda_0\in \rho(\cA)$ with $\re \lambda_0 < -\gamma$, then
$\sigma(\cA) \cap \{\lambda\in\C \mid \re\lambda<-\gamma\}= \emptyset$.
\end{thm}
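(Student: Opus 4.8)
The plan is to combine the spectral inclusion $\sigma(\cA)\subset\overline{W^2(\cA|_{H_1\times H_1})}$ from Theorem~\ref{Spektrum} with an explicit enclosure of $W^2(\cA|_{H_1\times H_1})$ extracted from the defining quadratic \eqref{Afg3}. First I would fix $\lambda=-t+\i y\in W^2(\cA|_{H_1\times H_1})$ with $\lambda\neq0$, for which there is $(f,g)^\top\in H_1\times H_1$, $f,g\ne0$, with $\Delta(f,g;\lambda)=0$. Writing $d:=\langle Dg,g\rangle/\|g\|^2$ (so $\re d=:p\ge\beta$) and $c:=|\langle f,g\rangle_{\hhalf}|^2/(\|f\|_{\hhalf}^2\|g\|^2)\ge0$, the relation reads $\lambda^2+d\lambda+c=0$, i.e.\ $d=-\lambda-c/\lambda$. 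Taking real parts and using $\re(1/\lambda)=-t/|\lambda|^2$ gives the scalar identity
\[
 (p-t)\,|\lambda|^2=c\,t .
\]
In particular $p\ge t=|\re\lambda|$, and $c>0$ forces $p>t$ (while $c=0$ means $\lambda=-d$, hence $t=p\ge\beta$).

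The decisive input is the hypothesis $\delta>0$. Cauchy--Schwarz in $\hhalf$ gives $c\le\|g\|_{\hhalf}^2/\|g\|^2$, and the definition of $\delta$ gives $p\ge\delta\,\|g\|_{\hhalf}^2/\|g\|^2$; together these yield $c\le p/\delta$. Substituting $c=(p-t)|\lambda|^2/t$ (for $c>0$) then gives $|\lambda|^2\le pt/(\delta(p-t))$, hence
\[
 y^2\le\frac{pt}{\delta(p-t)}-t^2 .
\]
Since the right-hand side is decreasing in $p$ and $p\ge\beta$, for $t<\beta$ I would replace $p$ by $\beta$ to obtain $|\im\lambda|\le h_0(|\re\lambda|)$ (for $t\ge\beta$ there is nothing to prove, as $h_0\equiv\infty$). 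Read as the requirement $h_0(t)^2\ge0$, this same inequality is equivalent to $t^2-\beta t+\beta/\delta\ge0$; when $\beta\delta>4$ this quadratic is negative exactly for $t$ strictly between its roots $\frac\beta2\bigl(1\pm\sqrt{1-4/(\beta\delta)}\bigr)$, so no $\lambda\in W^2(\cA|_{H_1\times H_1})$ can have $|\re\lambda|\in I_0$. Finally, setting $t=0$ in the scalar identity and using $p\ge\beta>0$ shows that the only point of $W^2(\cA|_{H_1\times H_1})$ on $\i\R$ is $0$. Altogether $\overline{W^2(\cA|_{H_1\times H_1})}$ is contained in the claimed region together with the single point $0$, and meets $\i\R$ only at $0$.

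It remains to transfer this to $\sigma(\cA)$ via Theorem~\ref{Spektrum}, which is where the real care lies. Since the claimed region has no left bound ($h_0\equiv\infty$ for $|\re\lambda|\ge\beta$, so in particular $\{\re z<-\gamma\}$ is contained in it), I only need resolvent points in those components of $\C\setminus\overline{W^2(\cA|_{H_1\times H_1})}$ that meet the complement of the region. I would argue that the right half-plane $\{\re z>0\}$, the exterior at large $|\im\lambda|$, and---when $\beta\delta>4$---the strip $\{|\re\lambda|\in I_0\}$ all belong to one and the same unbounded component, reached by passing around the bounded ``lens'' of $\overline{W^2(\cA|_{H_1\times H_1})}$ near the origin; this component meets $\rho(\cA)$ because $\sigma(\cA)\subset\{\re z\le0\}$. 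Theorem~\ref{Spektrum} then places $\sigma(\cA)$ in the region and yields the spectral free strip $\sigma(\cA)\cap\{|\re\lambda|\in I_0\}=\emptyset$, while the strict inequality $\re\lambda<0$ follows from $\sigma(\cA)\subset\overline{W^2(\cA|_{H_1\times H_1})}$, $\overline{W^2(\cA|_{H_1\times H_1})}\cap\i\R=\{0\}$ and $0\in\rho(\cA)$ (Proposition~\ref{Ainv}). For the final assertion, if $\gamma<\infty$ and some $\lambda_0\in\rho(\cA)$ has $\re\lambda_0<-\gamma$, then the connected open half-plane $\{\re z<-\gamma\}$ lies in $\C\setminus\overline{W^2(\cA|_{H_1\times H_1})}$ (by Proposition~\ref{vonC}, $\inf\Re W^2(\cA|_{H_1\times H_1})=-\gamma$) and contains $\lambda_0$, so Theorem~\ref{Spektrum} excludes spectrum there. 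The main obstacle is precisely this topological bookkeeping: confirming that the non-convex $\overline{W^2(\cA|_{H_1\times H_1})}$ splits into the bounded lens and the left piece exactly as the formulas for $h_0$ and $I_0$ predict, and that the gap $I_0$ opens into the resolvent-carrying component rather than forming an isolated hole.
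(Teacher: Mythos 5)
Your proposal is correct and follows essentially the same route as the paper's proof: from $\det(\cA_{f,g}-\lambda)=0$ one takes real parts to get the scalar identity, bounds $|\la f,g\ra_{H_{1/2}}|^2$ via Cauchy--Schwarz and the definition of $\delta$, uses $\re\la Dg,g\ra\ge\beta$ together with monotonicity in $p$ to obtain $|\lambda|^2\le\frac1\delta\frac{\beta|\re\lambda|}{\beta-|\re\lambda|}$, and reads off the gap $I_0$ from nonnegativity of $h_0^2$; the transfer to $\sigma(\cA)$ via Theorem~\ref{Spektrum} and the connectedness of the complement through the pinch at $0$ is exactly what the paper invokes (more tersely) when it notes that $h_0$ is bounded near $0$ with $h_0(0)=0$ and that $0\in\rho(\cA)$.
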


\begin{proof}
If we show that $W^2(\cA|_{H_1\times H_1}) \setminus \{0\}$ satisfies the asserted inclusion,
then so does $\sigma(\cA)$ due to Theorem~\ref{Spektrum}, the fact that $0\in \rho(\cA)$ by Proposition~\ref{Ainv} and
that $h_0$ is bounded on the subinterval in $[0,\frac \beta 2]$ where it is defined with $h_0(0)=0$.

Since $\Re W^2(\cA|_{H_1\times H_1}) \le 0$ it suffices to consider $\lambda\in W^2(\cA|_{H_1\times H_1})$ with
$-\beta< \re \lambda \le 0$. By Definition~\ref{QNR},
there exists $(f,g)^{\top}  \in  H_1\times H_1$, $\|f\|_{\hhalf} = \|g\| = 1$,~with
\begin{equation}
\label{Simon3}
0 = \det(\cA_{f,g}-\lambda)
= \lambda \bigl( \lambda + \la Dg,g \ra \bigr) + |\la f,g\ra_{\hhalf}|^2.
\end{equation}
Together with $| \la f,g\ra_{\hhalf}|^2 \le \|g\|_{\hhalf}^2 \le \dfrac{\re \la Dg,g \ra}{\delta}$, this implies that
\[
 \re \la Dg,g \ra = -  |\la f,g\ra_{\hhalf}|^2 \,\re \frac 1 \lambda - \re \lambda \le
 \Bigl( \dfrac{\re \la Dg,g \ra}{\delta} \frac 1{|\lambda|^2} + 1 \Bigr) |\re \lambda|
\]
and hence
\[
  \frac{1}{|\re\lambda|} \le \frac{1}{\delta|\lambda|^2} + \frac{1}{\re\la Dg,g\ra}\,.
\]
Using this estimate and $\re \la Dg,g \ra \ge \beta > |\re\lambda| >0$, we obtain
\[
|\re\lambda|^2+|\im\lambda|^2 = |\lambda|^2
\le \frac 1 \delta \Bigl( \frac 1{|\re\lambda|} - \frac 1{\re\la Dg,g\ra}\Bigr)^{\!-1} \!
\le \frac 1 \delta \Bigl( \frac 1{|\re\lambda|} - \frac 1{\beta}\Bigr)^{\!-1} \!
= \frac 1 \delta \frac{\beta |\re\lambda|}{\beta- |\re\lambda|}\,,
\]
which proves the claimed spectral inclusion. 
Note that, if $\beta\delta>4$, then estimating the left hand side above further by
$|\re\lambda|^2 \le |\re\lambda|^2+|\im\lambda|^2$ yields that $|\re\lambda|$ must satisfy the inequality
$|\re\lambda|(\beta- |\re\lambda|) \le \frac \beta \delta$ or, equivalently, $\bigl| |\re\lambda| - \frac \beta 2\bigr|^2 \ge
\bigl(\frac \beta 2\bigr)^2 \bigl( 1-\frac 4 {\beta\delta} \bigr)
>0$.

The last assertion follows from Proposition~\rmref{qnrincl}.
\end{proof}

\begin{thm}\label{supernett}
Assume there exists $k\geq 0$ such that
\begin{equation}\label{Simon1}
|\im \langle Dz,z \rangle|\leq k \,\re \langle Dz,z \rangle,
\quad  z\in H_1.
\end{equation}
\begin{enumerate}
\item[{\rm (i)}] If $\,\beta>0$, then
\begin{equation}\label{Simon2a}
\sigma(\cA)
\subset
\bigl\{\lambda \in \C \mid \re \lambda < 0,
\
|\im \lambda| \le h_{\rm i}(|\re\lambda|)
\bigr\}
\end{equation}
where $h_{\rm i}:[0,\infty) \to [0,\infty]$ is given by \vsqueeze{2mm}
\begin{equation}
h_{\rm i}(t) :=
\begin{cases}
\frac 1{ 1- \frac 2\beta t} k t, & 0 \le t < \dfrac \beta 2, \\
\hspace{3mm} \infty, & \dfrac \beta 2 \le t < \infty;
\end{cases}
\vspace{-1mm}
\end{equation}
if $\gamma < \infty$ and there is a $\lambda_0\in \rho(\cA)$ with $\re \lambda_0 < -\gamma$, then
\[
\sigma(\cA)
\subset
\bigl\{\lambda \in \C \mid -\gamma \le \re \lambda < 0,
\
|\im \lambda| \le h_{\rm i}(|\re\lambda|)
\bigr\}.
\]
\item[{\rm (ii)}] If $\,\mu > 0$, then
\begin{equation}
\label{Simon2}
\sigma(\cA)
\subset
\left\{\lambda \in \C \mid -\gamma \le \re \lambda < 0, \
|\im \lambda| \le h_{\rm ii}(|\re\lambda|)
\right\}
\end{equation}
where $h_{\rm ii}:[0,\infty) \to [0,\infty)$ is given by
\begin{equation}
\label{kmu}
h_{\rm ii}(t) := k_\mu t, \quad k_\mu^2 :=
\frac 2{\mu^2} + \frac{k^2\!-\!1}{2}
+ \smash{\sqrt{\Bigl( \frac 2{\mu^2} + \frac{k^2\!-\!1}{2}\Bigr)^{\!2}\!+k^2}\,,}
\end{equation}
with $k_\mu \in [0,\infty)$ satisfying $k \le k_\mu \le \sqrt{k^2+\frac 4{\mu^2}}$.
\item[{\rm (iii)}] If $\,\delta>0$, then \vsqueeze{1mm}
\begin{equation}\label{Simon2b}
\sigma(\cA) 
\subset \bigl\{\lambda \in \C  \mid -\gamma \le \re \lambda < 0, \
|\im \lambda| \le h_{\rm iii}(|\re\lambda|)
\bigr\}
\end{equation}
where $h_{\rm iii}:[0,\infty) \to [0,\infty)$ is defined by $h_{\rm iii}(t)$ being the largest non-negative
solution $y$ 
of 
\begin{equation}
\label{cubic}
(y^2+t^2)(y-kt) = \frac 2\delta ty,
\end{equation}
which satisfies the estimates
\begin{equation}
\label{veryverylast}
kt \le  h_{\rm iii}(t) \le \min \biggl\{ kt+\frac 1\delta, \frac {kt}2 + \sqrt{\Bigl( \frac{kt}2\Bigr)^{\!2}\! + \mkern-1mu \frac{2t}\delta} \,\biggr\}
\le kt + \min \biggl\{ \frac 1\delta , \sqrt{\frac{2t}{\delta}} \mkern1.5mu\biggr\},
\quad t\!\in\! [0,\infty).
\end{equation}
\end{enumerate}
\end{thm}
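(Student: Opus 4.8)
The plan is to derive the three enclosures for $W^2(\cA|_{H_1\times H_1})\setminus\{0\}$ directly from the characteristic equation \eqref{Simon3} and then transfer them to $\sigma(\cA)$ exactly as in the proof of Theorem~\ref{hendrik}: since $0\in\rho(\cA)$ by Proposition~\ref{Ainv}, every enclosure for $W^2(\cA|_{H_1\times H_1})\setminus\{0\}$ passes to $\sigma(\cA)$ via Theorem~\ref{Spektrum}, provided each component of $\C\setminus\overline{W^2(\cA|_{H_1\times H_1})}$ meets $\rho(\cA)$. The right half-plane lies in $\rho(\cA)$ by Corollary~\ref{may4}~(i), which handles the component adjacent to the imaginary axis because the established enclosures force $\overline{W^2(\cA|_{H_1\times H_1})}$ to touch $\{\re\lambda=0\}$ only at $0$ (here $h_{\mathrm i}(0)=h_{\mathrm{ii}}(0)=h_{\mathrm{iii}}(0)=0$). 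The left boundary $\re\lambda=-\gamma$ comes for free in (ii) and (iii): if $\gamma<\infty$ then $\mu>0$ (note $\delta>0$ forces $\mu\ge a_0\delta>0$ by \eqref{ineq}), so $A_0$ is bounded by Lemma~\ref{0alemma} and $\cA$ is bounded by Proposition~\ref{Ainv}, whence $\sigma(\cA)\subset\overline{W^2(\cA|_{H_1\times H_1})}$ holds unconditionally, while if $\gamma=\infty$ the bound $-\gamma\le\re\lambda$ is vacuous. In (i), where only $\beta>0$ is assumed, the extra resolvent point $\lambda_0$ with $\re\lambda_0<-\gamma$ is precisely what secures the second, left-bounded inclusion.

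For the analytic core, fix $\lambda=x+\i y\in W^2(\cA|_{H_1\times H_1})\setminus\{0\}$; by Definition~\ref{QNR} there is $(f,g)^\top\in H_1\times H_1$ with $\|f\|_{\hhalf}=\|g\|=1$ satisfying $\lambda^2+\lambda\,\la Dg,g\ra+|\la f,g\ra_{\hhalf}|^2=0$ by \eqref{Simon3}. Write $d:=\la Dg,g\ra=p+\i q$ with $p=\re d\ge0$ and $|q|\le kp$ by \eqref{Simon1}, and $c:=|\la f,g\ra_{\hhalf}|^2$, where Cauchy--Schwarz gives $0\le c\le\|g\|_{\hhalf}^2$. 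Dividing by $\lambda\ne0$ and putting $s:=c/|\lambda|^2\ge0$, the real and imaginary parts of $\lambda+c/\lambda=-d$ read
\[
 p=|x|(1+s),\qquad q=-y(1-s),
\]
so sectoriality becomes $|y|\,|1-s|\le k|x|(1+s)$. All three enclosures follow by feeding the respective lower bound on $\re\la Dg,g\ra$ into this pair of identities.

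Part (i) is quickest: combining the two identities yields $y(2x+p)=-xq$, hence $|y|\,(p-2|x|)\le|x|\,kp$ whenever $p>2|x|$. Since $\beta>0$ gives $p\ge\beta$ and $p\mapsto\frac{p}{p-2|x|}$ is decreasing, replacing $p$ by $\beta$ gives $|y|\le\frac{k|x|\beta}{\beta-2|x|}=h_{\mathrm i}(|x|)$ for $|x|<\beta/2$, while for $|x|\ge\beta/2$ one has $h_{\mathrm i}(|x|)=\infty$. For (ii) the hypothesis $\mu>0$ gives $c\le\|g\|_{\hhalf}^2\le p^2/\mu^2$, i.e.\ $\mu^2 s(1+r^2)\le(1+s)^2$ with $r:=|y|/|x|$, and for (iii) the hypothesis $\delta>0$ gives $c\le\|g\|_{\hhalf}^2\le\re\la Dg,g\ra/\delta=p/\delta$, i.e.\ $\delta s|x|(1+r^2)\le1+s$. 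In each case $\lambda$ is admissible exactly when some $s\ge0$ satisfies both the sectoriality inequality and this constraint, and the extremal $\lambda$ (largest $|y|$ for fixed $|x|$) occurs where both hold with equality, namely at the lower sectoriality endpoint $s=\frac{r-k}{r+k}$. Eliminating $s$ gives, for (ii), the quadratic $R^2+(1-k^2-\tfrac4{\mu^2})R-k^2=0$ in $R=r^2$, whose largest nonnegative root equals $k_\mu^2$ from \eqref{kmu} (the bounds $k\le k_\mu\le\sqrt{k^2+4/\mu^2}$ follow by evaluating the quadratic at $R=k^2$ and $R=k^2+4/\mu^2$), and, for (iii), the cubic $(y^2+t^2)(y-kt)=\tfrac2\delta ty$ of \eqref{cubic} with $t=|x|$, whose largest nonnegative root is $h_{\mathrm{iii}}(t)$. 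The estimate \eqref{veryverylast} drops out of $y-kt=\frac{2ty}{\delta(y^2+t^2)}$: using $y^2+t^2\ge2ty$ gives $y\le kt+\tfrac1\delta$, and using $y^2+t^2\ge y^2$ gives $y^2-kty-\tfrac{2t}\delta\le0$, hence $y\le\tfrac{kt}2+\sqrt{(\tfrac{kt}2)^2+\tfrac{2t}\delta}$, with the last bound by subadditivity of the square root.

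The main obstacle is part (iii): one must justify that marginal feasibility really is the configuration with both inequalities tight at $s=\frac{y-kt}{y+kt}$ (tracking how the admissible $s$-interval from sectoriality and the interval $s\in[0,s_*]$ from the $\delta$-constraint shrink to a point as $|y|$ grows), check that this $s$ is nonnegative and compatible with $p\ge\beta$, and verify that the cubic indeed determines a finite, monotone $h_{\mathrm{iii}}$ as its largest nonnegative root. Parts (i) and (ii) are comparatively routine once the identities $p=|x|(1+s)$, $q=-y(1-s)$ are in hand, and the passage to $\sigma(\cA)$ mirrors Theorem~\ref{hendrik}.
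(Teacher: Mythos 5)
Your setup coincides with the paper's: you start from the characteristic equation \eqref{Simon3}, divide by $\lambda\ne0$, and read off the identities $\re\la Dg,g\ra=|{\re\lambda}|(1+s)$ and $\im\la Dg,g\ra=-(1-s)\im\lambda$ with $s=|\la f,g\ra_{\hhalf}|^2/|\lambda|^2$ — these are exactly \eqref{hendrik1}, \eqref{hendrik2} — and your part (i) is the paper's argument almost verbatim. The reduction to $W^2(\cA|_{H_1\times H_1})\setminus\{0\}$ and the transfer to $\sigma(\cA)$ via Theorem~\ref{Spektrum} also match (your extra observation that $\gamma<\infty$ and $\mu>0$ force $\cA$ to be bounded is true but not needed, since the enclosing truncated cones have connected complement and the single resolvent point $0$ already suffices). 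Where you genuinely diverge is in (ii) and (iii): you treat $s$ as a free parameter, intersect the feasible $s$-sets coming from sectoriality and from the $\mu$- resp.\ $\delta$-bound, and locate the threshold for $r=|\im\lambda|/|\re\lambda|$ at the double-tangency point $s=\frac{r-k}{r+k}$; eliminating $s$ there does reproduce the quadratic for $k_\mu^2$ and the cubic \eqref{cubic}. This is correct — the point that makes it work is that both constraint sets are invariant under $s\mapsto 1/s$, the sectoriality set is the interval $\bigl[\frac{r-k}{r+k},\frac{r+k}{r-k}\bigr]$ whose left endpoint increases in $r$, and the competing bound on $s$ decreases in $r$ — but, as you note yourself, this monotonicity/symmetry argument still has to be written out. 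The paper avoids the issue entirely by never optimizing over $s$: for (ii) it forms the identity $\bigl(\frac{\re\la Dg,g\ra}{\re\lambda}\bigr)^2-\bigl(\frac{\im\la Dg,g\ra}{\im\lambda}\bigr)^2=4\,|\la f,g\ra_{\hhalf}|^2/|\lambda|^2$, divides by $(\re\la Dg,g\ra)^2/|\lambda|^2$ and inserts $|\la f,g\ra_{\hhalf}|\le\frac1\mu\re\la Dg,g\ra$ and \eqref{Simon1}, obtaining the inequality $r^4+(1-k^2-\frac4{\mu^2})r^2-k^2\le0$ valid for \emph{every} admissible $\lambda$; for (iii) an analogous one-line manipulation yields $|\lambda|^2(|\im\lambda|-k|\re\lambda|)\le\frac2\delta|\re\lambda|\,|\im\lambda|$ directly. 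If you want to close your acknowledged gap without the feasibility analysis, substitute the bounds into the identities rather than optimizing. Your derivation of $k\le k_\mu\le\sqrt{k^2+4/\mu^2}$ by evaluating the quadratic at $R=k^2$ and $R=k^2+4/\mu^2$, and of \eqref{veryverylast} from $y-kt=\frac{2ty}{\delta(y^2+t^2)}$, is clean and agrees with the paper in substance.
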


\begin{rem}
\label{kmu-rem}
(a)
If $k>0$, then the function $\mu \mapsto k_\mu$ is strictly decreasing on $(0,\infty)$ from a pole at $\mu=0$ to $\lim_{\mu\to\infty} k_\mu = k$;
for $k=0$, it is strictly decreasing on $(0,2)$ and equal to $0$ for $\mu \ge 2$,
\begin{equation}
\label{kmuk0}
 k_\mu^2 = \frac 2{\mu^2}-\frac 12 + \Bigl| \frac 2{\mu^2}-\frac 12 \Bigr|
         = \begin{cases}
           \frac 4{\mu^2} -1, & 0 < \mu < 2,\\
           \hspace{4.5mm} 0, & \hspace{7mm} \mu \ge 2,
          \end{cases}
\qquad \mbox{if } k=0.
\end{equation}
Note that, in general, $k_\mu=k$ if and only if $k=0$ and $\mu \ge 2$.

(b) The spectral enclosure \eqref{Simon2b} by the quadratic numerical range in  
Theorem~\ref{supernett}~(iii) is better than the one by the numerical range 
in Proposition~\ref{SpekInclusion}; %
indeed, the term $\sqrt{\rule[-0.3ex]{-0.1ex}{2.25ex}\smash{\frac{2t}{\delta}}}$ in the last upper bound for $h_{\rm iii}$ in \eqref{veryverylast} is better than the corresponding term there by a factor of $\sqrt2$.
\end{rem}

\begin{proof}[Proof of Theorem~\rmref{supernett}.]
\renewcommand{\hhalf}{H_{1/2}}
If we show that $W^2(\cA|_{H_1\times H_1}) \setminus \{0\}$ satisfies the asserted inclusions,
then so does $\sigma(\cA)$ due to Theorem~\ref{Spektrum}, the fact that $0\in \rho(\cA)$ by Proposition~\ref{Ainv}, and $h_k(0)=0$ for $k\in\{{\rm i}, {\rm ii}, {\rm iii}\}$.

Let $\lambda\in W^2(\cA|_{H_1\times H_1}) \setminus \{0\}$. Proposition~\ref{qnrincl} implies that
$-\gamma \le \re \lambda \leq 0$, so we only have to show the estimates for $\im\lambda$.
Further, we can assume that $\im\lambda \ne 0$ since
all enclosing sets contain $\{t\in\R \mid -\gamma\le t<0\}$.
By Definition~\ref{QNR}, there exists $(f,g)^{\top} \! \in  H_1\mkern-1mu\times\mkern-1mu H_1$ with $\|f\|_{\hhalf}\mkern-2mu = \|g\| = 1$
such that \eqref{Simon3} holds.
Dividing by $\lambda$ and taking real and imaginary parts, we~obtain
\begin{align}
\label{hendrik1}
\re \la Dg,g \ra &= \Biggl( \frac{|\la f,g\ra_{\hhalf}|^2}{|\lambda|^2}+1 \Biggr) |\re\lambda|, \\
\label{hendrik2}
\im \la Dg,g \ra &= \Biggl( \frac{|\la f,g\ra_{\hhalf}|^2}{|\lambda|^2}-1 \Biggr) \,\im\lambda.
\end{align}
Since in all cases $\beta >0$ and hence $\re \la Dg,g \ra \ne 0$, \eqref{hendrik1}
implies $\re\lambda \ne 0$ and hence we conclude
\begin{align}
\label{chr1}
 & \frac{\im\la Dg,g \ra}{\im\lambda}  = \frac{\re\la Dg,g \ra}{|\re\lambda|} -2  = \frac{\re\la Dg,g \ra-2\,|\re\lambda|}{|\re\lambda|}\,, \\
\label{chr2}
 & \frac{\re\la Dg,g \ra}{|\re\lambda|} + \frac{\im\la Dg,g \ra}{\im\lambda} =  2 \frac{|\la f,g\ra_{\hhalf}|^2}{|\lambda|^2}\,.
\end{align}

{\rm (i)}
Assume that $|\re\lambda|< \frac \beta 2$. By \eqref{hendrik1} and the definition of $\beta$ in \eqref{bgdm}, we have
\[
  \frac{|\la f,g\ra_{\hhalf}|^2}{|\lambda|^2}+1 = \frac{\re\la Dg,g \ra}{|\re\lambda|} \ge \frac \beta{|\re\lambda|} \ (\,>2).
\]
Then, from \eqref{hendrik1}, \eqref{hendrik2}, \eqref{Simon1} and the above estimate it follows that
\[
  \frac{|\im\lambda|}{|\re\lambda|} = \frac{|\im\la Dg,g \ra|}{|\re \la Dg,g \ra|} \,
	 \frac{\frac{|\la f,g\ra_{\hhalf}|^2}{|\lambda|^2}+1}
	 {\bigg|\frac{|\la f,g\ra_{\hhalf}|^2}{|\lambda|^2}-1\bigg|}
   	\le k\, \frac 1{1 - 2 \biggl( \frac{|\la f,g\ra_{\hhalf}|^2}{|\lambda|^2}+1\biggr)^{\!\raisebox{-2pt}{$\scriptstyle-1$}}}
	\le k\, \frac 1{1 - \frac 2\beta |\re\lambda|}\,.
\]

{\rm (ii)}
By~\eqref{hendrik1} and~\eqref{hendrik2} we obtain
\[
    \left( \frac{\re \la Dg,g \ra}{\re\lambda} \right)^{\!2}
  - \left( \frac{\im \la Dg,g \ra}{\im\lambda} \right)^{\!2}
  = 4\, \frac{|\la f,g\ra_{\hhalf}|^2}{|\lambda|^2}\,.
\]
Multiplying this identity by $\dfrac{|\lambda|^2}{(\re \la Dg,g \ra)^2}$, we infer that
\[
    \frac{|\lambda|^2}{|\re\lambda|^2}
  - \frac{|\lambda|^2}{|\im\lambda|^2}
    \left( \frac{\im \la Dg,g \ra}{\re \la Dg,g \ra} \right)^{\!2}
  = 4 \left( \frac{|\la f,g\ra_{\hhalf}|}{\re \la Dg,g \ra} \right)^{\!2}.
\]
Using $\|f\|_{\hhalf} \mkern-1.5mu = \|g\| = 1$ and the definition of $\mu$ in \eqref{bgdm}, we estimate
$|\la f,g\ra_{\hhalf}| \le \|g\|_{\hhalf} \mkern-1.5mu \le \frac1\mu \re \la Dg,g \ra$.
Thus from the sectoriality of $D$, i.e.\ from \eqref{Simon1}, it follows that
\[
  1 - k^2 + \frac{|\im\lambda|^2}{|\re\lambda|^2}
          - \frac{|\re\lambda|^2}{|\im\lambda|^2} k^2
  = \frac{|\lambda|^2}{|\re\lambda|^2} - \frac{|\lambda|^2}{|\im\lambda|^2} k^2
  \le \frac{4}{\mu^2}\,.
\]
Hence
\[
  \frac{|\im\lambda|^2}{|\re\lambda|^2} \left( \frac{|\im\lambda|^2}{|\re\lambda|^2} + 1 - k^2 - \frac 4{\mu^2} \right) - k^2 \le 0.
\]
The latter is a quadratic inequality for $\frac{|\im\lambda|^2}{|\re\lambda|^2}$.
If we note that $ 1 - k^2 - \frac 4{\mu^2} = - 2 \bigl( \frac 2{\mu^2} + \frac{k^2-1}{2} \bigr)$, we see that this inequality
is satisfied if and only if $\smash[t]{\frac{|\im\lambda|^2}{|\re\lambda|^2}} \le k_\mu^2$,
due to the definition of $k_\mu^2$.

The inequalities for $k_\mu$ are not difficult to check: for the lower bound we note that $k_\mu^2$ is strictly decreasing in $\mu$ and $\lim_{\mu\to\infty} k_\mu^2 = k^2$; for the upper bound we use the inequality $\bigl( \frac 2{\mu^2} + \frac{k^2-1}{2}\bigr)^2\!+k^2
\le \bigl( \frac 2{\mu^2} + \frac{k^2+1}{2}\bigr)^2$.

{\rm (iii)}
Multiplying \eqref{chr2} by $\dfrac{|\re\lambda| \,\im\lambda \,|\lambda|^2}{\re \la Dg,g \ra}$, we conclude that \vsqueeze{1mm}
\[
|\lambda|^2 \Bigl( \im \lambda + \frac{\im\la Dg,g \ra}{\re\la Dg,g \ra} |\re\lambda|  \Bigr) = 2 \frac{|\la f,g\ra_{\hhalf}|^2}{\re\la Dg,g \ra}|\re\lambda| \,\im\lambda.
\]
From the sectoriality of $D$, i.e.\ from \eqref{Simon1}, the inequality $\smash{|\la f,g\ra_{\hhalf}|^2} \le \smash{\|g\|_{\hhalf}^2}$,
and the definition of $\delta$ in \eqref{bgdm}, it follows that
\[
  |\lambda|^2 \bigl( |\im \lambda| - k|\re\lambda| \bigr)
  \le \smash[t]{\frac 2 \delta}  |\re\lambda| |\im\lambda|,
\]
which is satisfied if and only if $|\im\lambda|\le h_{\rm iii}(|\re\lambda|)$ by definition of $h_{\rm iii}$.

The three upper bounds for $h_{\rm iii}$ in \eqref{veryverylast} are not difficult to check:
for the first bound in the first inequality we use the estimate
$2ty \le t^2+y^2$ on the right hand side of \eqref{cubic},
while for the second bound in the first inequality we use
$y^2 \le (y^2+t^2)$ on the left hand side of \eqref{cubic}; the very last bound is obvious.
\end{proof}

\begin{rem}
By means of a different method, the spectral inclusion of Theorem~{\rm \ref{supernett}~(i)} was also shown in {\rm \cite[Theorem~4.2]{JT2}}, while Theorem~{\rm \ref{supernett}~(iii)} improves the corresponding statement of {\rm \cite[Theorem~4.2]{JT2}}.
\end{rem}

Note that 
due to \eqref{ineq}, $\mu>0$ implies $\beta>0$, and $\delta>0$ implies $\mu>0$ and thus $\beta>0$.
There\-fore if, in Theorem~\rmref{supernett}, (ii) applies then so does (i) and if (iii) applies, then so do (i) and~(ii).

\smallskip

In the following Proposition~\rmref{compare} we work out the precise form of the corresponding intersections
of the bounding sets in Theorem~\rmref{hendrik} and Theorem~\rmref{supernett} (i), (ii), and (iii).

Figures {\rm \ref{fig0}--\ref{fignew}} below illustrate how the spectral en\-clo\-sures by means of the quadratic numerical range 
(\redgrey) compare to those obtained by means of the numerical range (in light grey) 
and how the enclosures improve successively for the cases $\beta>0$, $\mu>0$, $\delta >0$, and $\beta\delta >4$.

\begin{figure}%
\def\onetwo{-1.0445}
\def\twothree{-3.1032}
\def\hnullLl{-2-0.871}
\def\hnullLr{-2-0.43646}
\def\hnullRl{-2+0.43646}
\def\hnullRr{-2+0.871}
\def\hnull{sqrt(-4*x/1.05/(4+x)-x^2)}
\def\hi{0.4*x/(2+x)}
\def\hii{0.4186*x}
\def\hiii{1.24+0.046*x+0.021*x^2}
\def\numr{-0.2*x+2*sqrt(-x/1.05)}
\newcommand\halfplane{%
\begin{scope}[very thin,color=lightgray,fill=lightgray]
  \addplot[domain=-4:-0.01,samples=2] {2} \closedcycle;
  \addplot[domain=-4:-0.01,samples=2] {-2} \closedcycle;
\end{scope}
  \addplot[domain=-4:-0.01,samples=2] {2};
  \addplot[domain=-4:-0.01,samples=2] {-2};
}
\newcommand\numrange{%
\begin{scope}[very thin,color=lightgray,fill=lightgray]
  \addplot[domain=-4:-0.01,samples=100] {-(\numr)} \closedcycle;
  \addplot[domain=-4:-0.01,samples=100] {\numr} \closedcycle;
\end{scope}
  \addplot[domain=-4:-0.01,samples=100,color=gray] {-(\numr)};
  \addplot[domain=-4:-0.01,samples=100,color=gray] {\numr};
  \draw[-] (axis cs:-4,-2) -- (axis cs:0,-2);
  \draw[-] (axis cs:-4,2) -- (axis cs:0,2);
  \draw[-] (axis cs:-4,-2) -- (axis cs:-4,2);
}

\thickmuskip=4mu plus 3mu minus 1mu
\setlength{\captionmargin}{3mm}
\begin{floatrow}
\ffigbox[\FBwidth]{\caption{\newline
Theorem~\ref{supernett} (i) with $k=0.2$, $\beta=4$, $\mu=0$, $\delta=0$.}
\label{fig0}}
{
\begin{tikzpicture}
\begin{axis}[xmin=-4,xmax=0.5,ymin=-2,ymax=2]
\pgfplotsset{ticks=none}
\halfplane
\begin{scope}[very thin,color=red,fill=red]
  \addplot[domain=-1.8:-0.01,samples=100] {-0.4*x/(2+x)} \closedcycle;
  \addplot[domain=-1.8:-0.01,samples=100] {0.4*x/(2+x)} \closedcycle;
  \addplot[domain=-4:-1.8,samples=2] {2} \closedcycle;
  \addplot[domain=-4:-1.8,samples=2] {-2} \closedcycle;
\end{scope}
  \addplot[domain=-1.8:-0.01,samples=100] {-0.4*x/(2+x)};
  \addplot[domain=-1.8:-0.01,samples=100] {0.4*x/(2+x)};
  \addplot[color=black,domain=-4:1,samples=2] {0};
\draw[-] (axis cs:0,-4) -- (axis cs:0,4);
\draw[thick,-] (axis cs:-2,-0.06) -- (axis cs:-2,0.06);
\draw[thick, -] (axis cs:-0.05,1) -- (axis cs:0.05,1);
\pgfplotsset{
    after end axis/.code={
	\node[below] at (axis cs:-2,-0.02){ $-2$ };
	\node[right] at (axis cs:0,1){ $1$ };
	}}
    \end{axis}
\end{tikzpicture}
}
\hspace{3mm}
\ffigbox[\FBwidth]{\caption{\small\newline
Theorem~\ref{supernett} (i), (ii) with $k=0.2$, $\beta=4$, $\mu=2.1$, $\delta=0$;
here $\lambda_{\rm i,ii} \approx 1.04$.}
\label{fig1}}
{
\begin{tikzpicture}
\begin{axis}[xmin=-4,xmax=0.5,ymin=-2,ymax=2]
\pgfplotsset{ticks=none}
\halfplane
\begin{scope}[very thin,color=red,fill=red]
  \addplot[domain=\onetwo:-0.01,samples=30] {-\hi} \closedcycle;
  \addplot[domain=\onetwo:-0.01,samples=30] {\hi} \closedcycle;
  \addplot[domain=-4:\onetwo,samples=2] {-\hii} \closedcycle;
  \addplot[domain=-4:\onetwo,samples=2] {\hii} \closedcycle;
\end{scope}
  \addplot[domain=\onetwo:-0.01,samples=30] {-\hi};
  \addplot[domain=-4:\onetwo,samples=2] {-\hii};
  \addplot[domain=-4:\onetwo,samples=2] {\hii};
  \addplot[domain=\onetwo:-0.01,samples=30] {\hi};
  \addplot[domain=-4:1,samples=2] {0};
\draw[thick, -] (axis cs:-2,-0.06) -- (axis cs:-2,0.06);
\draw[thick, -] (axis cs:-0.05,1) -- (axis cs:0.05,1);
\draw[-] (axis cs:0,-4) -- (axis cs:0,4);
\draw[thick, -] (axis cs:\onetwo,-0.06) -- (axis cs:\onetwo,0.06);
\pgfplotsset{
    after end axis/.code={
	\node[below] at (axis cs:-2,-0.02){ $-2$ };
	\node[right] at (axis cs:0,1){ $1$ };
	\node[below] at (axis cs:\onetwo,0.04){ \llap{$-$}$\lambda_{{\rm i,ii}}$ };
				}}
    \end{axis}
\end{tikzpicture}
}
\end{floatrow}
\vspace{1.2cm}
\begin{floatrow}
\ffigbox[\FBwidth]{\caption{\newline
Theorem~\ref{supernett}\,(i),\,(ii),\,(iii)\,without\,Theorem~\ref{hendrik} with $k=0.2$, $\beta=4$, $\mu=2.1$, $\delta=1.05$;
here $\lambda_{\rm i,ii} \approx 1.04$, $\lambda_{\rm ii,iii} \approx 3.10$, see Remark~\ref{newrem}.}
\label{fig2}}
{
\begin{tikzpicture}
\begin{axis}[xmin=-4,xmax=0.5,ymin=-2,ymax=2]
\pgfplotsset{ticks=none}
\numrange
\begin{scope}[very thin,color=red,fill=red]
  \addplot[domain=\onetwo:-0.01,samples=30] {-\hi} \closedcycle;
  \addplot[domain=\onetwo:-0.01,samples=30] {\hi} \closedcycle;
  \addplot[domain=\twothree:\onetwo,samples=2] {-\hii} \closedcycle;
  \addplot[domain=\twothree:\onetwo,samples=2] {\hii} \closedcycle;
  \addplot[domain=-4:\twothree,samples=10] {\hiii} \closedcycle;
  \addplot[domain=-4:\twothree,samples=10] {-(\hiii)} \closedcycle;
\end{scope}
  \addplot[domain=\onetwo:-0.01,samples=30] {-\hi};
  \addplot[domain=\twothree:\onetwo,samples=2] {-\hii};
  \addplot[domain=-4:\twothree,samples=10] {\hiii};
  \addplot[domain=-4:\twothree,samples=10] {-(\hiii)};
  \addplot[domain=\twothree:\onetwo,samples=2] {\hii};
  \addplot[domain=\onetwo:-0.01,samples=30] {\hi};
  \addplot[domain=-4:1,samples=2] {0};
\draw[thick, -] (axis cs:-2,-0.06) -- (axis cs:-2,0.06);
\draw[thick, -] (axis cs:-0.05,1) -- (axis cs:0.05,1);
\draw[-] (axis cs:0,-2) -- (axis cs:0,2);
\draw[thick, -] (axis cs:\onetwo,-0.06) -- (axis cs:\onetwo,0.06);
\draw[thick, -] (axis cs:\twothree,-0.06) -- (axis cs:\twothree,0.06);
\pgfplotsset{
   after end axis/.code={
	\node[below] at (axis cs:-2,-0.02){ $-2$ };
	\node[right] at (axis cs:0,1){ $1$ };
	\node[below] at (axis cs:\onetwo,0.04){ \llap{$-$}$\lambda_{{\rm i,ii}}$ };
	\node[below] at (axis cs:\twothree,0.04){ $-\lambda_{{\rm ii,iii}}\;$ };
				}}
    \end{axis}
\end{tikzpicture}
}
\hspace{3mm}
\ffigbox[\FBwidth]{\caption{\newline
Theorem~\ref{supernett} (i), (ii), (iii) and Theorem~\ref{hendrik} with $k=0.2$, $\beta=4$, $\mu=2.1$, $\delta=1.05$;
here $\beta\delta>4$, $k>\frac 4{\beta\delta}-1$, $\lambda_{\rm i,ii} \approx 1.04$, $\lambda_{\rm ii,iii} \approx 3.10$,
$I_{0,\mu}\approx(1.12,2.87)$, $I_0\approx(1.56,2.44)$, see Remark~\ref{newrem}.}
\label{fignew}}
{
\begin{tikzpicture}
\begin{axis}[xmin=-4,xmax=0.5,ymin=-2,ymax=2]
\pgfplotsset{ticks=none}
\numrange
\begin{scope}[very thin,color=red,fill=red]
  \addplot[domain=\onetwo:-0.01,samples=30] {-\hi} \closedcycle;
  \addplot[domain=\onetwo:-0.01,samples=30] {\hi} \closedcycle;
  \addplot[domain=\hnullRr:\onetwo,samples=2] {-\hii} \closedcycle;
  \addplot[domain=\hnullRr:\onetwo,samples=2] {\hii} \closedcycle;
  \addplot[domain=\hnullRl:\hnullRr,samples=100] {\hnull} \closedcycle;
  \addplot[domain=\hnullRl:\hnullRr,samples=100] {-\hnull} \closedcycle;
  \addplot[domain=\hnullLl:\hnullLr,samples=100] {\hnull} \closedcycle;
  \addplot[domain=\hnullLl:\hnullLr,samples=100] {-\hnull} \closedcycle;
  \addplot[domain=\twothree:\hnullLl,samples=2] {-\hii} \closedcycle;
  \addplot[domain=\twothree:\hnullLl,samples=2] {\hii} \closedcycle;
  \addplot[domain=-4:\twothree,samples=10] {\hiii} \closedcycle;
  \addplot[domain=-4:\twothree,samples=10] {-(\hiii)} \closedcycle;
\end{scope}
  \addplot[domain=\onetwo:-0.01,samples=30] {-\hi};
  \addplot[domain=\hnullRr:\onetwo,samples=2] {-\hii};
  \addplot[domain=\hnullRl:\hnullRr,samples=100] {\hnull};
  \addplot[domain=\hnullLl:\hnullLr,samples=100] {\hnull};
  \addplot[domain=\twothree:\hnullLl,samples=2] {-\hii};
  \addplot[domain=-4:\twothree,samples=10] {\hiii};
  \addplot[domain=-4:\twothree,samples=10] {-(\hiii)};
  \addplot[domain=\twothree:\hnullLl,samples=2] {\hii};
  \addplot[domain=\hnullLl:\hnullLr,samples=100] {-\hnull};
  \addplot[domain=\hnullRl:\hnullRr,samples=100] {-\hnull};
  \addplot[domain=\hnullRr:\onetwo,samples=2] {\hii};
  \addplot[domain=\onetwo:-0.01,samples=30] {\hi};
  \addplot[domain=-4:1,samples=2] {0};
\draw[thick, -] (axis cs:-2,-0.06) -- (axis cs:-2,0.06);
\draw[thick, -] (axis cs:-0.05,1) -- (axis cs:0.05,1);
\draw[-] (axis cs:0,-4) -- (axis cs:0,4);
\draw[thick, -] (axis cs:\onetwo,-0.06) -- (axis cs:\onetwo,0.06);
\draw[thick, -] (axis cs:\twothree,-0.06) -- (axis cs:\twothree,0.06);
\pgfplotsset{
   after end axis/.code={
	\node[below] at (axis cs:-2,-0.02){ $-2$ };
	\node[right] at (axis cs:0,1){ $1$ };
	\node[below] at (axis cs:\onetwo,0.04){ \llap{$-$}$\lambda_{{\rm i,ii}}$ };
	\node[below] at (axis cs:\twothree,0.04){ $-\lambda_{{\rm ii,iii}}\;$ };
				}}
    \end{axis}
\end{tikzpicture}
}
\end{floatrow}
\vspace{1cm}
\begin{center}
Figures \ref{fig0}--\ref{fignew}: Spectral enclosures obtained from $W(\cA)$ (light grey) and \\
from $W^2(\cA)$ (\redgrey).
\end{center}
\end{figure}

\begin{prop}
\label{compare}
Suppose that condition \eqref{Simon1} holds and define
\begin{alignat*}{2}
\lambda_{\rm i,ii} &:= \frac \beta 2 \Bigl( 1 - \frac k{k_\mu} \Bigr) \in \Bigl[0,\frac\beta 2\Bigr], \qquad
& & \qquad \text{if $\mu\!>\!0$ $($which implies $\beta\!>\!0)$},\\
\lambda_{\rm ii,iii} &:=
\begin{cases}
\displaystyle
\frac{\mu^2}{2\delta}\Bigl( 1 + \frac k{k_\mu}\Bigr) \in \Bigl[\frac \beta 2, \frac{\mu^2}{\delta} \Bigr), & k_\mu > k,\\
\hspace{9mm} \infty, & k_\mu=k=0,
\end{cases}
& & \qquad \text{if $\delta\!>\!0$ $($which implies $\mu\!>\!0$ and $\beta\!>\!0)$}.
\end{alignat*}
Then the spectrum of $\cA$ satisfies the following inclusions:
\begin{enumerate}
\item[{\rm (a)}]
if $\mu > 0$ $($and hence $\beta>0)$, then
\[
  \sigma(\cA)
  \subset
\delimitershortfall-1pt
\left\{\lambda \in \C \mid -\gamma \le \re \lambda < 0, \
|\im \lambda| \le\!
\begin{cases}
\leavevmode\\[-4ex]
\dfrac 1{ 1\!-\!\frac 2\beta |\re\lambda|} \,k|\re\lambda|, & 0 \!<\!|\re\lambda| \!\le\!\lambda_{\rm i,ii} \\
\hspace{6mm} k_\mu |\re\lambda|, & \lambda_{\rm i,ii} \!<\!|\re\lambda| \!\le\!\gamma \\[-4pt]
\end{cases}
\right\}\!;
\]
\item[{\rm (b)}]
if $\delta > 0$ $($and hence $\mu>0$, $\beta>0)$,
then $\lambda_{\rm i,ii} 
\le\lambda_{\rm ii,iii}$ and
\[
\sigma(\cA)
\!\subset\mkern-5mu
\left\{ \!\lambda \!\in\! \C \mid -\gamma \!\le\! \re \lambda \!<\! 0, \: |\re\lambda| \!\notin\! I_0, \:
|\im \lambda| \!\le\mkern-5mu
\begin{cases}
\!\dfrac 1{ 1\!-\! \frac 2\beta |\re\lambda|} \,k|\re\lambda|, & |\re\lambda| \!\in\! [0,\lambda_{\rm i,ii}]\hspace{-2mm} \\[-1mm]
\hspace{4mm} k_\mu |\re\lambda|,  &
\hspace{-10mm}|\re\lambda|\!\in\!(\lambda_{\rm i,ii},\lambda_{\rm ii,iii})\!\setminus\! I_{0,\mu}\!\! \\[0.5mm]
\hspace{4mm} h_0(|\re\lambda|),  &
|\re\lambda|\!\in\! I_{0,\mu}\!\setminus\!I_0,\\[0.5mm]
\hspace{4mm} h_{\rm iii}(|\re\lambda|),
&  |\re\lambda| \!\in\![\lambda_{\rm ii,iii},\gamma)\hspace{-1mm}
\end{cases}
\right\}\!
\]
where $I_0$, $h_0$ are as defined in Theorem {\rm \ref{hendrik}}, $k_\mu$, $h_{\rm iii}$ are as defined in Theorem {\rm\ref{supernett}}, and
$I_{0,\mu}$ is a $($possibly empty$)$ interval centred at $\frac \beta 2$, $I_0 \subset I_{0,\mu} \subset \bigl( \lambda_{\rm i,ii}, \lambda_{\rm ii,iii} \bigr)$, given by
\[
 I_{0,\mu}  := \begin{cases}
                \hspace{2.7cm}\emptyset & \mbox{ if } \ k_\mu^2  \le \frac 4{\beta\delta}-1,\\
								\left( \frac \beta 2 \Bigl( 1 - \sqrt{1 \!-\! \frac 4{\beta\delta} \frac 1{k_\mu^2+1}}\, \Bigr),
								       \frac \beta 2 \Bigl( 1 + \sqrt{1 \!-\! \frac 4{\beta\delta} \frac 1{k_\mu^2+1}}\, \Bigr) \right)  &
								\mbox{ if } \  k_\mu^2  > \frac 4{\beta\delta}-1,
                \end{cases}
\]
which satisfies $I_0 = I_{0,\mu}$ if and only if $k_\mu=0$ and $I_{0,\mu} = \bigl( \lambda_{\rm i,ii}, \lambda_{\rm ii,iii} \bigr)$ if and only if $\mu^2=\beta\delta$.
\end{enumerate}
\end{prop}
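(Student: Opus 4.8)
The plan is to note that, under the stated hypotheses, the relevant enclosures of Theorems~\ref{hendrik} and~\ref{supernett} hold \emph{simultaneously}, so that $\sigma(\cA)$ lies in the intersection of the corresponding bounding sets. As each such set has the form $\{\lambda\mid|\im\lambda|\le h(|\re\lambda|)\}$ (with Theorem~\ref{hendrik} additionally deleting the strip $I_0$), the whole problem reduces to computing the pointwise minimum of the functions $h_{\rm i},h_{\rm ii},h_{\rm iii},h_0$ on $[0,\gamma)$ and locating their crossovers. In part~(a) only $h_{\rm i}$ and $h_{\rm ii}$ occur, and $h_{\rm i}(t)\le h_{\rm ii}(t)$ is equivalent to $\frac{k}{1-\frac2\beta t}\le k_\mu$, i.e.\ to $t\le\frac\beta2\bigl(1-\frac k{k_\mu}\bigr)=\lambda_{\rm i,ii}$; this gives the two-piece description at once.

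For part~(b) I would first record the identity
\[
 (k_\mu^2-k^2)(k_\mu^2+1)=\frac{4k_\mu^2}{\mu^2},
\]
which follows by squaring the defining relation~\eqref{kmu}, and which drives every crossover. Substituting $y=k_\mu t$ into the cubic~\eqref{cubic} and simplifying with this identity, the difference of its two sides factors as a \emph{positive} multiple of $t^2(t-\lambda_{\rm ii,iii})$; hence the cubic vanishes at $y=k_\mu t$ exactly when $t=\lambda_{\rm ii,iii}=\frac{\mu^2}{2\delta}\bigl(1+\frac k{k_\mu}\bigr)$. To turn this into an ordering of $h_{\rm ii}$ and $h_{\rm iii}$, set $P(y):=(y^2+t^2)(y-kt)-\frac{2t}\delta y$ (the left- minus right-hand side of~\eqref{cubic}); this monic cubic is positive above its largest root $h_{\rm iii}(t)$, and the elementary facts $P(kt)=-\frac{2kt^2}\delta\le0$ and $k_\mu t\ge kt$ (since $k_\mu\ge k$) force $k_\mu t$ to lie at or beyond the last sign change of $P$. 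Thus the sign of $P(k_\mu t)$ decides whether $k_\mu t\gtrless h_{\rm iii}(t)$, yielding $h_{\rm ii}\le h_{\rm iii}$ for $t\le\lambda_{\rm ii,iii}$ and the reverse for $t\ge\lambda_{\rm ii,iii}$.

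Combining the two crossovers with $\lambda_{\rm i,ii}\le\frac\beta2\le\frac{\mu^2}{2\delta}\le\lambda_{\rm ii,iii}$ (the middle inequality being $\mu^2\ge\beta\delta$ from~\eqref{ineq}) identifies $\min\{h_{\rm i},h_{\rm ii},h_{\rm iii}\}$ as the asserted three-piece function. It remains to fold in the bound $h_0$ and the gap $I_0$ of Theorem~\ref{hendrik}. A direct computation shows that $h_0(t)\le h_{\rm ii}(t)=k_\mu t$ is equivalent to $t(\beta-t)\ge\frac{\beta}{\delta(k_\mu^2+1)}$, whose solution set is exactly $I_{0,\mu}$ (nonempty iff $k_\mu^2\ge\frac4{\beta\delta}-1$). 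On $I_{0,\mu}$ one then has $h_0\le h_{\rm ii}\le\min\{h_{\rm i},h_{\rm iii}\}$, so $h_0$ becomes the binding bound, while on the subinterval $I_0\subset I_{0,\mu}$ there is no spectrum at all by Theorem~\ref{hendrik}.

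Finally, the nesting $I_0\subset I_{0,\mu}\subset(\lambda_{\rm i,ii},\lambda_{\rm ii,iii})$ and the two equality criteria follow by comparing endpoints: $I_0\subset I_{0,\mu}$ because $k_\mu^2+1\ge1$ enlarges the square root, while the left-endpoint inequality $\frac\beta2\bigl(1-\sqrt{1-\frac4{\beta\delta(k_\mu^2+1)}}\,\bigr)\ge\lambda_{\rm i,ii}$ reduces, via the key identity, to $\frac1{\mu^2}\le\frac1{\beta\delta}$, i.e.\ to $\mu^2\ge\beta\delta$, with equality exactly when $\mu^2=\beta\delta$; the right endpoints are handled symmetrically, and the degenerate case $k_\mu=k=0$ is absorbed by the convention $\lambda_{\rm ii,iii}=\infty$. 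The main obstacle I expect is the cubic comparison of the second paragraph: one must ensure that the factored sign of $P(k_\mu t)$ genuinely reflects the position of $k_\mu t$ relative to the \emph{largest} root $h_{\rm iii}(t)$ and not to a spurious lower lobe — this is precisely what the observation $P(kt)\le0$ together with $k_\mu\ge k$ secures.
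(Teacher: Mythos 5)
Your proposal is correct and follows essentially the same route as the paper's proof: part (a) via the crossover $h_{\rm i}(t)\le h_{\rm ii}(t)\iff t\le\lambda_{\rm i,ii}$, part (b) by substituting $y=k_\mu t$ into the cubic \eqref{cubic} and simplifying with the identity $(k_\mu^2+1)(k_\mu^2-k^2)=\frac{4}{\mu^2}k_\mu^2$ to locate $\lambda_{\rm ii,iii}$, and the same quadratic inequality $t(\beta-t)\ge\frac{\beta}{\delta(k_\mu^2+1)}$ to identify $I_{0,\mu}$ and its nesting. The one loose point is your safeguard for the cubic comparison --- $P(kt)\le0$ together with $k_\mu\ge k$ does not by itself exclude $k_\mu t$ landing on a positive lobe of $P$ strictly below its largest root --- but the needed fact is true (since $P<0$ on $(0,kt]$ and $y\mapsto(y^2+t^2)(y-kt)/y$ is strictly increasing for $y>kt$, one gets $\{y\ge0\mid P(y)\le0\}=[0,h_{\rm iii}(t)]$), and the paper's own proof passes over this point just as tersely.
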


\begin{rem}
\label{newrem}
If
the interval 
$I_{0,\mu}$
is non-empty, then Theorem~\ref{hendrik} gives an improvement of Theorem \ref{supernett}.
This improvement is most substantial if even $I_0$ is non-empty.

In fact, $I_0 \ne \emptyset$ if and only if $\beta\delta > 4$, see \eqref{I0};
in this case Theorem \ref{hendrik} yields a spectral free strip for $|\re\lambda|\in I_0$ which is not provided by Theorem \ref{supernett}~(ii).
Further, $I_{0,\mu} \ne \emptyset$ if and only if $k_\mu^2  > \frac 4{\beta\delta}-1$;
in this case Theorem \ref{hendrik} yields a better estimate than Theorem \ref{supernett}~(ii) for $|\re\lambda| \in I_{0,\mu} \subset \bigl( \lambda_{\rm i,ii}, \lambda_{\rm ii,iii} \bigr)$.

Independently of $\mu$, there is always an improvement if $\beta\delta>4$ since then $I_0\ne \emptyset$.
Similarly,
if $\beta\delta < 4$ and $k^2  \ge \frac 4{\beta\delta}-1$,
then $I_{0,\mu} \ne \emptyset$ since $k_\mu >k$ due to Remark~\ref{kmu-rem}~(a); 
the same applies if $\beta\delta = 4$ and $k>0$. This is illustrated in Figure \ref{fignew}.

Depending on $\mu$, for $\beta$, $\delta$ fixed, the interval $I_{0,\mu}$ is decreasing for increasing $\mu$ since $k_\mu$ decreases, see 
Remark~\ref{kmu-rem}~(a).
More precisely, since $\mu^2 \ge \beta\delta$ by \eqref{ineq}, starting from $I_{0,\sqrt{\beta\delta}} = \bigl( \lambda_{\rm i,ii}, \lambda_{\rm ii,iii} \bigr)$ for $\mu^2=\beta\delta$, the interval $I_{0,\mu}$ shrinks down to a (possibly empty) limiting interval $I_{0,\infty}$ obtained from $I_{0,\mu}$ by replacing $k_\mu$ by its limit $\lim_{\mu\to\infty} k_\mu =k$. For $k>0$, we have
\[
   \begin{cases}
	 I_{0,\mu} \varsupsetneq I_{0, \infty} \varsupsetneq  I_0 & \ \mbox{ if } \ k^2  \ge \frac 4{\beta\delta}-1, \\
   I_{0,\mu} \varsupsetneq I_0 = \emptyset, \ \mu \in (0,\mu_0),
	 \quad\ I_{0,\mu} = I_{0, \infty} = I_0 = \emptyset, \ \mu \in [\mu_0,\infty), & \ \mbox{ if } \ k^2  < \frac 4{\beta\delta}-1,
	 \end{cases}
\]
where $\mu_0\in (0,\infty)$ is the threshold where $k_{\mu_0}^2  = \smash{\frac 4{\beta\delta}}-1$.
For $k=0$, we always have $I_{0,\mu}=I_0$,
and this interval is non-empty if and only if $\beta\delta>4$;
see also Theorem \ref{nett}~(iii) and Figures \ref{fig5}, \ref{fig6}.
\end{rem}

\begin{proof}[Proof of Proposition~\rmref{compare}.]

A straightforward computation shows that, for $t\in(0,\frac\beta 2)$, we have
$h_{\rm i}(t) \le h_{\rm ii}(t)$ if and only if $t \le \frac\beta2 \bigl(1-\frac{k}{k_\mu}\bigr) = \lambda_{\rm i,ii}$.

To compare the functions $h_{\rm ii}$ and $h_{\rm iii}$ we consider
the equation \eqref{cubic} defining $h_{\rm iii}(t)$ with $y$ replaced by $h_{\rm ii}(t) = k_\mu t$, which leads to the equation
\begin{equation}\label{ii-iii}
  (k_\mu^2+1) (k_\mu-k) t = \frac 2 \delta k_\mu.
\end{equation}
By definition \eqref{kmu}, $k_\mu$ satisfies
\begin{equation}\label{kmu-eq}
  0 = k_\mu^2 \Bigl( k_\mu^2 + 1 - k^2 - \frac 4{\mu^2} \Bigr) - k^2
    = k_\mu^4 + k_\mu^2 - k_\mu^2 k^2 - \frac 4{\mu^2} k_\mu^2 - k^2
    = (k_\mu^2+1) (k_\mu^2-k^2) - \frac 4{\mu^2} k_\mu^2.
\end{equation}
Therefore, if $k_\mu>k$, we obtain a unique solution of~\eqref{ii-iii},
\[
  \lambda_{\rm ii,iii} = \frac 2 \delta \frac{k_\mu}{(k_\mu^2+1) (k_\mu-k)} = \frac 2 \delta \frac{k_\mu(k_\mu+k)}{\frac 4{\mu^2}  k_\mu^2}
  = \frac {\mu^2}{2\delta} \frac{k_\mu+k}{k_\mu} = \frac {\mu^2}{2\delta} \Bigl( 1+ \frac{k}{k_\mu} \Bigr) \le  \frac{\mu^2}\delta\,,
\]
for which $h_{\rm ii}(t) \le h_{\rm iii}(t)$ if and only if $t \le \lambda_{\rm ii,iii}$.
If $k_\mu=k$, then $k_\mu=k=0$ due to Remark~\ref{kmu-rem}~(a) and thus, in this case,
$h_{\rm ii}(t)=0$ for all $t\in[0,\infty)$ and $\lambda_{\rm ii,iii} = \infty$.

Since $\beta \delta \le \mu^2$ by \eqref{ineq}, it is easy to see that $\lambda_{\rm i,ii} \le \frac \beta 2 \le \frac{\mu^2}{2\delta} \le \lambda_{\rm ii,iii}$ and hence
\begin{alignat*}{2}
 &h_{\rm i}(t) \le h_{\rm ii}(t) \le h_{\rm iii}(t), \quad &&  t\in [0, \lambda_{\rm i,ii}], \\
 &h_{\rm ii}(t) \le \min\{ h_{\rm i}(t),h_{\rm iii}(t)\}, \quad &&  t\in [\lambda_{\rm i,ii},\lambda_{\rm ii,iii}], \\
 &h_{\rm iii}(t) \le h_{\rm ii}(t) \le h_{\rm i}(t), \quad && t\in [\lambda_{\rm ii,iii}, \infty).
\end{alignat*}

Finally, if $\delta>0$, we compare the enclosures of Theorem \ref{supernett} with Theorem \ref{hendrik}.
It is not difficult to see that, for $t\in [0,\beta)$,
\[
  h_0(t) \le h_{\rm ii}(t) = k_\mu t \ \iff \ t \in I_{0,\mu}.
\]
Since $\smash{\frac 1{k_\mu^2+1}} \le 1$, it is obvious that $I_0 \subset I_{0,\mu}$ and $I_{0,\mu} = I_0$ if and only if $k_\mu=0$.
By~\eqref{kmu-eq} one obtains $\frac{k}{k_\mu} = \sqrt{1-\frac{4}{\mu^2}\frac{1}{k_\mu^2+1}}$;
since $\mu^2\ge\beta\delta$, it follows that $I_{0,\mu} = \bigl( \lambda_{\rm i,ii}, \lambda_{\rm ii,iii} \bigr)$ if and only if $\mu^2=\beta\delta$. 
Now the inclusion $I_{0,\mu} \subset \bigl( \lambda_{\rm i,ii}, \lambda_{\rm ii,iii} \bigr)$
for $\mu^2>\beta\delta$ follows if we recall that $I_{0,\mu}$ is decreasing for increasing $\mu$, see Remark~\ref{newrem}.
\end{proof}

\section{Self-adjoint damping: estimates for QNR and spectrum} 
\label{section7}

In this section we assume that the damping operator is not only sectorial but even self-adjoint, i.e.\  $A_0^{-\frac 12}DA_0^{-\frac 12}$ is self-adjoint. In this case, it is known, see \cite[Proof of Lemma~4.5]{TWII}, that the operator $\cA$ is $\cJ$-self-adjoint, i.e.\  $\cA^*=\cJ \cA \cJ$ with
\[
  \cJ=\begin{mat}
       I_{\hhalf} & 0 \\ 0 & -I_{H}
      \end{mat} \quad \mbox{in } H_{\frac 12} \times H.
\]
Hence the spectrum $\sigma(\cA)$ of $\cA$ is symmetric with respect to the real line, see \cite[Satz I.2]{L62}.
This property is reflected by both the numerical range and the quadratic numerical range. 

\vspace{-1mm}

\begin{lem}\label{Wolfach}
Assume $A_0^{-\frac 12}D A_0^{-\frac 12}$ is a bounded self-adjoint operator in $H$.
Then
$W(\cA|_{H_1\times H_1})$ and $W^2(\cA|_{H_1\times H_1})$ are symmetric with respect to the real line,
and hence so are $\overline{W(\cA)} = \overline{W(\cA|_{H_1\times H_1})}$ and $\overline{W^2(\cA|_{H_1\times H_1})}$.
Moreover, $\sigma(\cA) \subset \overline{W^2(\cA|_{H_1\times H_1})}$.
\end{lem}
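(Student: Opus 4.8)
The plan is to base everything on the single observation that self-adjointness of $A_0^{-\frac12}DA_0^{-\frac12}$ makes $\langle Dz,z\rangle$ real: for $z\in H_{\frac12}$ I would set $u:=A_0^{\frac12}z\in H$ and use $\langle Dz,z\rangle_{H_{-\frac12}\times H_{\frac12}} = \langle A_0^{-\frac12}DA_0^{-\frac12}u,u\rangle_H$, which is real since $A_0^{-\frac12}DA_0^{-\frac12}$ is self-adjoint. For the symmetry of $W(\cA|_{H_1\times H_1})$ I would then start from the representation $\lambda = -2\i\,\im\langle A_0^{\frac12}f,A_0^{\frac12}g\rangle - \langle Dg,g\rangle$ of Remark~\ref{numericalrange} and replace $(f,g)^\top$ by $\cJ(f,g)^\top=(f,-g)^\top$; this vector again lies in $H_1\times H_1$ with the same norm, leaves the (now real) term $\langle Dg,g\rangle$ unchanged, and flips the sign of the imaginary term, hence returns $\overline\lambda$. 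Equivalently, $\cA^*=\cJ\cA\cJ$ with $\cJ=\cJ^*$ yields $\overline{\langle\cA x,x\rangle}=\langle\cA\cJ x,\cJ x\rangle$ on the core. For $W^2(\cA|_{H_1\times H_1})$ the sign flip does not suffice, so I would instead invoke the polynomial description of Remark~\ref{remark}: once $\langle Dg,g\rangle\in\R$, all coefficients of $\Delta(f,g;\cdot)$ are real, so its two roots form a complex-conjugate pair; as both are eigenvalues of $\cA_{f,g}$, they both lie in $W^2(\cA|_{H_1\times H_1})$, which is therefore symmetric about the real line.

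The identity $\overline{W(\cA)}=\overline{W(\cA|_{H_1\times H_1})}$ I would obtain from the fact that $H_1\times H_1$ is a core of $\cA$ (Proposition~\ref{Ainv}): the inclusion $W(\cA|_{H_1\times H_1})\subset W(\cA)$ is immediate, and conversely any $\lambda=\langle\cA x,x\rangle$ with $x\in\dom(\cA)$, $\|x\|=1$, is approximated by core vectors $x_n\to x$ with $\cA x_n\to\cA x$, so that $\langle\cA x_n,x_n\rangle/\|x_n\|^2\to\lambda$ exhibits $\lambda\in\overline{W(\cA|_{H_1\times H_1})}$. The symmetry of $\overline{W(\cA)}$ and of $\overline{W^2(\cA|_{H_1\times H_1})}$ is then inherited directly from the symmetry of the underlying sets established above.

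The remaining and principal point is $\sigma(\cA)\subset\overline{W^2(\cA|_{H_1\times H_1})}$. By Theorem~\ref{Karl} one already has $\sigma_{ap}(\cA)\subset\overline{W^2(\cA|_{H_1\times H_1})}$, so only $\sigma(\cA)\setminus\sigma_{ap}(\cA)$ needs attention, and I would argue that this part is residual spectrum controlled through $\cJ$-self-adjointness. If $\lambda\in\sigma(\cA)\setminus\sigma_{ap}(\cA)$, then $\cA-\lambda$ is bounded below with non-dense range, hence $\overline\lambda\in\sigma_p(\cA^*)$; and since $\cA^*=\cJ\cA\cJ$ with $\cJ$ boundedly invertible, $\sigma_p(\cA^*)=\sigma_p(\cJ\cA\cJ)=\sigma_p(\cA)\subset\sigma_{ap}(\cA)\subset\overline{W^2(\cA|_{H_1\times H_1})}$. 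Thus $\overline\lambda\in\overline{W^2(\cA|_{H_1\times H_1})}$, and the symmetry of this set forces $\lambda\in\overline{W^2(\cA|_{H_1\times H_1})}$. I expect this last step to be the main obstacle: unlike the approximate point spectrum, the residual spectrum is not reached by Theorem~\ref{Karl} directly, and it is precisely the interplay between the spectral inclusion for $\sigma_{ap}$, the identity $\sigma_p(\cA^*)=\sigma_p(\cA)$ furnished by $\cA^*=\cJ\cA\cJ$, and the symmetry of $\overline{W^2(\cA|_{H_1\times H_1})}$ that closes the gap.
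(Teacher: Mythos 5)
Your proposal is correct and follows essentially the same route as the paper: symmetry of $W(\cA|_{H_1\times H_1})$ via $(f,g)^\top\mapsto(f,-g)^\top$ in \eqref{WA}, symmetry of $W^2(\cA|_{H_1\times H_1})$ via the real coefficients of $\det(\cA_{f,g}-\lambda)$, the core property of $H_1\times H_1$ for $\overline{W(\cA)}=\overline{W(\cA|_{H_1\times H_1})}$, and the split $\sigma(\cA)=\sigma_{ap}(\cA)\cup\sigma_r(\cA)$ with Theorem~\ref{Karl} handling the first part and $\cJ$-self-adjointness the second. The only cosmetic difference is that where the paper cites a theorem of Bogn\'ar for $\lambda\in\sigma_r(\cA)\Rightarrow\overline\lambda\in\sigma_p(\cA)$, you correctly re-derive it from $\overline\lambda\in\sigma_p(\cA^*)$ and $\sigma_p(\cA^*)=\sigma_p(\cJ\cA\cJ)=\sigma_p(\cA)$.
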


\begin{proof}
We have $\lambda \in W(\cA|_{H_1\times H_1})$ if and only if there is $(f, g)^{\top} \!\in  H_1\times H_1 $ with \vspace{-1.5mm}
$\|f\|^2_{\hhalf}\! + \|g\|^2 = 1$ so that \eqref{WA} holds. Clearly, $(f, -g)^{\top} \in  H_1\times H_1$
and \eqref{WA} shows that $\overline{\lambda}$ is in $W(\cA|_{H_1\times H_1})$.

The symmetry of $W^2(\cA|_{H_1\times H_1})$ follows from the fact that, for self-adjoint $D$ and $(f,g)^{\top} \! \in  H_1\times H_1$ with $\|f\|_{\hhalf}  = \|g\|= 1$,
the polynomial $\det(\cA_{f,g}-\lambda) = \lambda^2 + \lambda\la Dg,g\ra + \smash{|\la f,g\ra_{\hhalf}|^2}$
has real coefficients and so its zeros are symmetric with respect to the real~line.

For the next claim it remains to be noted that $H_1\times H_1$ is a core (cf.\ Proposition~\ref{Ainv}) and thus
$\overline{W(\cA)} = \overline{W(\cA|_{H_1\times H_1})}$ by \cite[Problem \V.3.7]{K}.

Finally, let $\lambda \in \sigma(\cA)$.
Then either $\lambda \in \sigma_{ap}(\cA)$
and hence $\lambda \in \overline{W^2(\cA|_{H_1\times H_1})}$ by Theorem~\ref{Karl},
or $\lambda \in \sigma_{r}(\cA)$.
In the latter case we obtain
$\overline{\lambda} \in \sigma_{p}(\cA) \subset \overline{W^2(\cA|_{H_1\times H_1})}$
by \cite[Theorem~VI.6.1]{bog} since $\cA$ is $\cJ$-selfadjoint.
Hence $\lambda \in \overline{W^2(\cA|_{H_1\times H_1})}$ by the symmetry shown before.
\end{proof}

\begin{thm}\label{nett}
Assume $A_0^{-\frac 12}D A_0^{-\frac 12}$ is a bounded self-adjoint operator in $H$.
\begin{enumerate}
\item[{\rm (i)}] If \,$\beta >0$, then \vsqueeze{1mm}
\begin{equation} \label{Pader0i}
\sigma(\cA) \subset \Bigl\{\lambda \in \C \mid \re\lambda \leq
-\frac{\beta}{2} \Bigr\} \cup \Bigl[-\frac{\beta}{2},0 \Bigr).
\end{equation}
\item[{\rm (ii)}] If \,$0<\mu < 2$, then $\beta \ge \mu a_0 > 0$ and
\begin{equation}
\label{Paderi}
\sigma(\cA) \subset \Bigl\{\lambda \in \C \mid \re \lambda \leq -\frac \beta 2,\
|\im\lambda| \leq \smash{\frac{\sqrt{4-\mu^2}}{\mu}}\, |\re \lambda | \Bigr\} \cup \Bigl[-\frac{\beta}{2},0 \Bigr);
\end{equation}
if \,$\mu \ge 2$, then
\[
\sigma(\cA) 
\subset (-\infty,0).
\]
\item[{\rm (iii)}] \,If $\,\delta>0$, then $\beta \ge \delta a_0^2 > 0$,
$\sigma(\cA) \setminus \R$ is bounded and confined to a part of a disk, and
\begin{equation}
\label{Paderiii}
\sigma(\cA) \subset 
\Bigl(-\infty,-\frac 2\delta\,\Bigr] \cup
\Bigl\{\lambda \in \C \mid
-\frac 2\delta \le \re \lambda \leq -\frac{\beta}{2},\ \Bigl|\lambda + \frac{1}{\delta}\Bigr| \leq \frac{1}{\delta}\Bigr\}\cup
\Bigl[-\frac{\beta}{2}, 0\Bigr);
\end{equation}
if $\,\beta \delta > 4$, then \vsqueeze{1.5mm}
\[
  \sigma(\cA) \subset
  \Bigl( -\infty, -\frac{\beta}{2} \Bigl( 1 + \sqrt{ 1 - \frac 4{\beta\delta}}\,\Bigr) \, \Bigr] \cup
  \Bigl[ -\frac{\beta}{2} \Bigl( 1 - \sqrt{ 1 - \frac 4{\beta\delta}}\,\Bigr) \,, 0 \Bigr).
\]
\end{enumerate}
If, in any of the above cases, in addition $\gamma<\infty$, then also
\begin{equation}
\label{Paderiv}
\sigma(\cA) \subset \Bigl[-\gamma,-\frac{\gamma}{2}\,\Bigr] \cup \Bigl\{\lambda \in \C \mid
-\frac{\gamma}{2} \leq \re \lambda < 0 \Bigr\}.
 \end{equation}
\end{thm}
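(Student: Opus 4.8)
The plan is to reduce every assertion to an elementary analysis of the roots of the quadratic attached to $\cA_{f,g}$, exploiting that these roots have \emph{real} coefficients when $D$ is self-adjoint. By Lemma~\ref{Wolfach} we have $\sigma(\cA)\subset\overline{W^2(\cA|_{H_1\times H_1})}$ and both sets are symmetric about $\R$, and by Proposition~\ref{Ainv} we know $0\in\rho(\cA)$; hence it suffices to locate the nonzero points of $W^2(\cA|_{H_1\times H_1})$. For $(f,g)^\top\in H_1\times H_1$ with $\|f\|_{\hhalf}=\|g\|=1$, Remark~\ref{remark} shows that $\lambda\in\sigma_p(\cA_{f,g})$ iff $\lambda^2+d\lambda+p=0$, where $d:=\langle Dg,g\rangle$ is real (by self-adjointness of $A_0^{-1/2}DA_0^{-1/2}$) with $d\ge\beta$ (by the definition of $\beta$ in \eqref{bgdm}) and $p:=|\langle f,g\rangle_{\hhalf}|^2\ge0$ with $p\le\|g\|_{\hhalf}^2$ (Cauchy--Schwarz). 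The roots are $\lambda_\pm=\tfrac12(-d\pm\sqrt{d^2-4p})$: if $d^2\ge4p$ they are two nonpositive reals, and if $d^2<4p$ they are a complex conjugate pair with $\re\lambda=-d/2\le-\beta/2$ and $(\im\lambda)^2=p-d^2/4$. This dichotomy drives the entire argument.

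For part (i) there is no further constraint on $p$, so the complex roots fill $\{\re\lambda\le-\beta/2\}$ while the real roots lie in $(-\infty,0]$; removing $0$ yields the two claimed pieces. For (ii) and (iii) I would bring in the coupling between $p$ and $d$: the definitions of $\mu$ and $\delta$ give $\|g\|_{\hhalf}^2\le(d/\mu)^2$ and $\|g\|_{\hhalf}^2\le d/\delta$ respectively, whence $p\le d^2/\mu^2$ and $p\le d/\delta$; the inequalities $\beta\ge\mu a_0$ and $\beta\ge\delta a_0^2$ are read off from \eqref{ineq}. In (ii) a complex pair requires $d^2/4<p\le d^2/\mu^2$, which is impossible when $\mu\ge2$ (forcing $\sigma(\cA)\subset(-\infty,0)$) and otherwise gives $(\im\lambda)^2\le d^2(4-\mu^2)/(4\mu^2)$, i.e.\ $|\im\lambda|\le\frac{\sqrt{4-\mu^2}}{\mu}|\re\lambda|$, the asserted sector.

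In (iii) a complex pair requires $d^2/4<p\le d/\delta$, hence $\beta\le d<4/\delta$; substituting $p\le d/\delta$ and $d=2|\re\lambda|$ into $(\im\lambda)^2=p-d^2/4$ and completing the square turns the bound into $|\lambda+1/\delta|\le1/\delta$, so the complex points sit in the stated disk with $-2/\delta<\re\lambda\le-\beta/2$, while the nonpositive real roots account for the two real pieces. When $\beta\delta>4$ one has $d\ge\beta>4/\delta$, so \emph{no} complex roots occur and $\sigma(\cA)\subset\R$; here I would determine the exact real gap by writing $\lambda=-s$, $s>0$, and asking for which $s$ some admissible $d\ge\beta$ satisfies $0\le s(d-s)\le d/\delta$. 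This reduces to $s^2-\beta s+\beta/\delta\ge0$, i.e.\ $s\le s_-$ or $s\ge s_+$ with $s_\pm=\frac\beta2\bigl(1\pm\sqrt{1-4/(\beta\delta)}\,\bigr)$, producing precisely the spectral gap $(-s_+,-s_-)$. I expect this real-root feasibility computation to be the main obstacle: one must check that the constraint $p\le d/\delta$ is actually attainable by some $d\ge\beta$ and verify the ordering $1/\delta<s_-<s_+$ (via $s_-s_+=\beta/\delta$ and $s_-+s_+=\beta$) so that the sub-cases glue into a single clean gap.

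Finally, for the $\gamma<\infty$ addendum I would add the bound $d\le\gamma$: then complex roots have $\re\lambda=-d/2\in[-\gamma/2,-\beta/2]$ and the real roots satisfy $\lambda_+\in[-\gamma/2,0]$ and $\lambda_-\in[-\gamma,-\beta/2]$, so each real point lies either in $[-\gamma,-\gamma/2]$ or in the strip $\{-\gamma/2\le\re\lambda<0\}$, giving \eqref{Paderiv}. Throughout, to pass from $W^2(\cA|_{H_1\times H_1})$ to $\sigma(\cA)$ I take closures (the bounding sets are closed once the line $\re\lambda=0$ is temporarily included) and then delete the imaginary axis: since complex QNR points obey $\re\lambda\le-\beta/2$ and real ones are real, $\overline{W^2(\cA|_{H_1\times H_1})}\cap\{\re\lambda=0\}\subset\{0\}$, and $0\in\rho(\cA)$ removes it.
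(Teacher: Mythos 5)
Your proof is correct, but it takes a genuinely different route from the paper. The paper obtains (i)--(iii) as corollaries of the general sectorial results: self-adjointness of $A_0^{-\frac12}DA_0^{-\frac12}$ gives \eqref{Simon1} with $k=0$, whereupon Theorem~\ref{supernett}~(i)--(iii) and Theorem~\ref{hendrik} specialize ($h_{\rm i}\equiv 0$, $k_\mu^2=\frac4{\mu^2}-1$ for $\mu<2$, and the cubic \eqref{cubic} degenerating to $h_{\rm iii}(t)=\smash{\sqrt{\tfrac2\delta t-t^2}}$) to exactly the stated enclosures; only \eqref{Paderiv} is proved there by a direct computation with \eqref{chr1}. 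You instead re-derive everything from scratch by analyzing the roots $\lambda_\pm=\tfrac12(-d\pm\sqrt{d^2-4p})$ of the real-coefficient quadratic $\lambda^2+d\lambda+p$ with $d=\la Dg,g\ra\ge\beta$ and $p=|\la f,g\ra_{H_{1/2}}|^2$ bounded via $\|g\|_{H_{1/2}}^2\le d^2/\mu^2$ resp.\ $d/\delta$; the real/complex dichotomy then yields each enclosure by elementary algebra, and your feasibility analysis $s(d-s)\le d/\delta$ for real roots reproduces the gap $(-s_+,-s_-)$ of Theorem~\ref{hendrik} in the case $k=0$. Your route is more self-contained and transparent in this special case, and it has the structural advantage that you invoke Lemma~\ref{Wolfach} once to get $\sigma(\cA)\subset\overline{W^2(\cA|_{H_1\times H_1})}$ unconditionally (no connectedness-of-components argument is needed even when the enclosing set is disconnected, as in the gap case), at the cost of not exhibiting the statements as specializations of the sectorial machinery. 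Two small points you should make explicit when writing this up: the inequality $1/\delta<s_-$ (equivalent to $4/(\beta\delta)^2>0$), which you correctly flag as needed to merge the sub-cases $s\le 1/\delta$ and $s>1/\delta$ into the single condition $s\notin(s_-,s_+)$; and the observation that each of your bounding sets, once the point $0$ is adjoined, is closed, so that passing from $W^2$ to $\overline{W^2}\supset\sigma(\cA)$ and then deleting $0\in\rho(\cA)$ is legitimate --- both of which you have essentially already noted.
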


\begin{proof}
The self-adjointness of $A_0^{-\frac 12}D A_0^{-\frac 12}$ implies that $\im \la Dg,g\ra=0$ for $g\in H_1$
and hence \eqref{Simon1} holds with $k=0$.

\begin{figure}%
\thickmuskip=4mu plus 3mu minus 1mu
\setlength{\captionmargin}{3mm}%
\def\twothree{-1.40625}%
\def\kmu{1.76383}%
\newcommand\halfplane{%
\begin{scope}[very thin,color=lightgray,fill=lightgray]
  \addplot[domain=-3:-0.01,samples=2] {3} \closedcycle;
  \addplot[domain=-3:-0.01,samples=2] {-3} \closedcycle;
\end{scope}
  \addplot[domain=-3:-0.01,samples=2] {3};
  \addplot[domain=-3:-0.01,samples=2] {-3};
}
\newcommand\numrange[1]{%
\def\numr{2*sqrt(-x/#1)}
\begin{scope}[very thin,color=lightgray,fill=lightgray]
  \addplot[domain=-3:-0.01,samples=100] {-(\numr)} \closedcycle;
  \addplot[domain=-3:-0.01,samples=100] {\numr} \closedcycle;
\end{scope}
  \addplot[domain=-3:-0.01,samples=100,color=gray] {-(\numr)};
  \addplot[domain=-3:-0.01,samples=100,color=gray] {\numr};
  \draw[-] (axis cs:-3,-3) -- (axis cs:0,-3);
  \draw[-] (axis cs:-3,3) -- (axis cs:0,3);
  \draw[-] (axis cs:-3,-3) -- (axis cs:-4,3);
}

\begin{floatrow}
\ffigbox[\FBwidth]
{\caption{\label{fig3} \small \newline Theorem~\ref{nett} (i) with $\beta=4$, $\mu=0$, $\delta=0$.}}
{
\begin{tikzpicture}
\begin{axis}[xmin=-3,xmax=0.5,ymin=-3,ymax=3]
\pgfplotsset{ticks=none}
\halfplane
\addplot[ultra thick,color=red,domain=-3:0,samples=2] {0};
\begin{scope}[very thin,color=red,fill=red]
\addplot[domain=-3:-1,samples=2] {3} \closedcycle;
\addplot[domain=-3:-1,samples=2] {-3} \closedcycle;
\end{scope}
\addplot[domain=-3:1,samples=2] {0};
\draw[-] (axis cs:0,-4) -- (axis cs:0,4);
\draw[thick, -] (axis cs:-1,-0.1) -- (axis cs:-1,0.1);
\draw[-] (axis cs:-1,-3) -- (axis cs:-1,3);
\pgfplotsset{
    after end axis/.code={
	\node[above] at (axis cs:-1,-0.6){ $-2$ };
	}}
\end{axis}
\end{tikzpicture}
}
\hspace{3mm}
\ffigbox[\FBwidth]
{\caption{\label{fig4} \small \newline Theorem~\ref{nett} (i), (ii) with $\beta=4$, $\mu=1.5$, $\delta=0$.}}
{
\begin{tikzpicture}
\begin{axis}[xmin=-3,xmax=0.5,ymin=-3,ymax=3]
\pgfplotsset{ticks=none}
\halfplane
\addplot[ultra thick,color=red,domain=-3:0,samples=2] {0};
\begin{scope}[very thin,color=red,fill=red]
\addplot[domain=-3:-1,samples=2] {\kmu*x} \closedcycle;
\addplot[domain=-3:-1,samples=2] {-\kmu*x} \closedcycle;
\end{scope}
\addplot[domain=-3:1,samples=2] {0};
\draw[-] (axis cs:0,-4) -- (axis cs:0,4);
\draw[thick, -] (axis cs:-1,-0.1) -- (axis cs:-1,0.1);
\draw[thick, -] (axis cs:-0.05,1) -- (axis cs:0.05,1);
\draw[-] (axis cs:-1,-1.76) -- (axis cs:-1,1.76);
\addplot[color=black,domain=-2:-1,samples=2] {\kmu*x};
\addplot[domain=-2:-1,samples=2] {-\kmu*x};
\pgfplotsset{
    after end axis/.code={
	\node[above] at (axis cs:-1,-0.6){ $-2$ };
	\node[above] at (axis cs:0.13,0.75){ $1$ };
	}}
\end{axis}
\end{tikzpicture}
}
\end{floatrow}
\vspace{1.2cm}
\begin{floatrow}
\ffigbox[\FBwidth]
{\caption{\label{fig5} \small \newline Theorem~\ref{nett} (i), (ii), (iii) with $\beta=4$, $\mu=1.5$, $\delta=0.4$;
here $k_\mu^2 = \frac79 < \frac32 = \frac{4}{\beta\delta}-1$, $I_0=I_{0,\mu}=\emptyset$,
so no improvement by Theorem~\ref{hendrik}, see Remark~\ref{newrem}.
}}
{
\begin{tikzpicture}
\begin{axis}[xmin=-3,xmax=0.5,ymin=-3,ymax=3]
\pgfplotsset{ticks=none}
\numrange{0.4}
\addplot[ultra thick,color=red,domain=-3:0,samples=2] {0};
\begin{scope}[very thin,color=red,fill=red]
\addplot[domain=\twothree:-1,samples=2] {\kmu*x} \closedcycle;
\addplot[domain=\twothree:-1,samples=2] {-\kmu*x} \closedcycle;
\addplot[domain=-2.5:\twothree,samples=100] {2*sqrt(-x)*sqrt(2.5+x)}\closedcycle;
\addplot[domain=-2.5:\twothree,samples=100] {-2*sqrt(-x)*sqrt(2.5+x)}\closedcycle;
\end{scope}
\addplot[domain=-2.5:\twothree,samples=150] {2*sqrt(-x)*sqrt(2.5+x)};
\addplot[domain=-2.5:\twothree,samples=150] {-2*sqrt(-x)*sqrt(2.5+x)};
\addplot[domain=\twothree:-1,samples=2] {\kmu*x};
\addplot[domain=\twothree:-1,samples=2] {-\kmu*x};
\draw[-] (axis cs:-1,-1.76) -- (axis cs:-1,1.76);
\addplot[domain=-3:1,samples=2] {0};
\draw[-] (axis cs:0,-4) -- (axis cs:0,4);
\draw[thick, -] (axis cs:-0.05,1) -- (axis cs:0.05,1);
\draw[thick, -] (axis cs:-1,-0.1) -- (axis cs:-1,0.1);
pgfplotsset{
   after end axis/.code={
	\node[above] at (axis cs:-1,-0.6){ $-2$ };
	\node[above] at (axis cs:0.13,0.75){ $1$ };
	}}
\end{axis}
\end{tikzpicture}
}
\hspace{3mm}
\ffigbox[\FBwidth]
{\caption{\label{fig6} \small \newline Theorem~\ref{nett} (iii) with $\beta=4$, $\delta=\frac43$;
here spectral gap in $-I_0=-I_{0,\mu}=(-3,-1)$ by Theorem \ref{hendrik}, see Remark~\ref{newrem}.
}}
{
\begin{tikzpicture}
\begin{axis}[xmin=-3,xmax=0.5,ymin=-3,ymax=3]
\pgfplotsset{ticks=none}
\numrange{4/3}
\addplot[ultra thick,color=red,domain=-3:-1.5,samples=2] {0};
\addplot[ultra thick,color=red,domain=-0.5:0,samples=2] {0};
\addplot[domain=-3:1,samples=2] {0};
\draw[-] (axis cs:0,-4) -- (axis cs:0,4);
\draw[thick, -] (axis cs:-0.05,1) -- (axis cs:0.05,1);
\draw[thick, -] (axis cs:-1,-0.1) -- (axis cs:-1,0.1);
pgfplotsset{
   after end axis/.code={
	\node[above] at (axis cs:-1,-0.6){ $-2$ };
	\node[above] at (axis cs:0.13,0.75){ $1$ };
	}}
\end{axis}
\end{tikzpicture}
}
\end{floatrow}
\vspace{1cm}
\begin{center}
Figures \ref{fig3}--\ref{fig6}: Spectral enclosures obtained from $W(\cA)$ (light grey) 
and \\ from $W^2(\cA)$ (\redgrey).
\end{center}
\end{figure}

(i) The inclusion \eqref{Pader0i} follows from Theorem~\ref{supernett}~(i) since $k=0$ implies $h_{\rm i}(t)=0$ for all $t\in[0,\frac \beta 2)$.

(ii)
If we use $\beta \ge \mu a_0$, see \eqref{ineq}, and formula \eqref{kmuk0} which describes $k_\mu$ in the case $k=0$,
both inclusions follow from part~(i) and Theorem~\ref{supernett}~(ii).

(iii) By \eqref{ineq} we have $\beta \ge \delta a_0^2$. Further,
for $k=0$, the equation $\eqref{cubic}$ defining $h_{\rm iii}$
reads $(y^2+t^2)y = \frac2\delta ty$.
Thus, $h_{\rm iii}(t) = \sqrt{\frac 2\delta t - t^2}$ for $t \in [0,\frac2\delta]$
and $h_{\rm iii}(t) = 0$ for $t>\frac2\delta$.
Now both assertions in (iii) follow from part~(i), Theorem~\ref{supernett}~(iii) and Theorem~\ref{hendrik}.

By Lemma~\ref{Wolfach} it suffices to prove the inclusion in \eqref{Paderiv} for
$\overline{W^2(\cA|_{H_1\times H_1})} \setminus\{0\}$ in place of $\sigma(\cA)$.
Let $\lambda\in W^2(\cA|_{H_1\times H_1}) \setminus \R$.
Then there exists $(f,g)^{\top} \!\in  H_1\times H_1$ with \vspace{-0.5mm} $\|f\|_{\hhalf}\mkern-1.5mu = \|g\|= 1$
such that \eqref{Simon3} and hence \eqref{hendrik1}, \eqref{hendrik2} hold.
Using $\im\lambda\ne 0$ and $\im \la Dg,g\ra=0$ in \eqref{chr1} we find
\[
  |\re \lambda| = \frac{1}{2} \la Dg,g\ra \le \frac{\gamma}{2}\,.
\]
Then, by Proposition~\ref{qnrincl}, we conclude that
\[
W^2(\cA|_{H_1\times H_1}) \subset \Bigl[-\gamma,-\frac{\gamma}{2}\,\Bigr] \cup \Bigl\{\lambda \in \C \mid
-\frac{\gamma}{2} \leq \re \lambda < 0 \Bigr\}. \qedhere
\]
\end{proof}

\begin{rem}
We mention that, by means of a different method, the inclusions in
{\rm (i)}, the second inclusion in {\rm (ii)}, and the first inclusion in {\rm (iii)} were shown in {\rm \cite[Theorem~3.3]{JT}}, while \eqref{Paderiv} is an improvement of a corresponding inclusion therein.
\end{rem}

As in the previous section, due to \eqref{ineq}, $\mu>0$ implies $\beta>0$, and $\delta>0$ implies $\mu>0$ and thus $\beta>0$.
Therefore if, in Theorem~\rmref{nett}, (ii) applies then so does (i) and if (iii) applies, then so do (i) and (ii).
The precise form of the combination of all inclusions is given in the next proposition.


Figures~\ref{fig3}--\ref{fig6} illustrate how the spectral enclosures by means of the quadratic numerical range (\redgrey) 
compare to those obtained by means of the numerical range (light grey) and how the enclosures
improve successively for the cases $\beta>0$, $0<\mu<2$, $\delta>0$ and $\beta\delta>4$.


\begin{prop}
\label{compare-sa}
Let $A_0^{-\frac 12}D A_0^{-\frac 12}$ be a bounded self-adjoint operator in~$H$.
\begin{enumerate}
\item[{\rm (a)}]
If \,$\mu\ge 2$, then 
\[
 \sigma(\cA) \subset \begin{cases} (-\infty, 0) & \mbox{if } \gamma\!=\!\infty,\\
                                   \hspace{2.1mm} [-\gamma, 0) & \mbox{if } \gamma\!<\!\infty;
                     \end{cases}
\]
if, in addition, $\delta>0$ $($and hence $\beta>0)$ with $\beta\delta > 4$, then
\[
 \sigma(\cA) \subset
 \begin{cases}
 \Bigl(\!-\infty, - \dfrac \beta 2 \Bigl( 1 + \sqrt{1\!-\!\dfrac 4{\beta\delta}} \,\Bigr)\Bigr]
 \cup \Bigl[- \dfrac \beta 2 \Bigl( 1 - \sqrt{1\!-\!\dfrac 4{\beta\delta}} \,\Bigr),0\Bigr) & \mbox{if } \gamma\!=\!\infty,\\[1.6ex]
 \hspace{1.5mm} \Bigl[-\gamma, - \dfrac \beta 2 \Bigl( 1 + \sqrt{1\!-\!\dfrac 4{\beta\delta}} \,\Bigr)\Bigr]
 \cup \Bigl[- \dfrac \beta 2 \Bigl( 1 - \sqrt{1\!-\!\dfrac 4{\beta\delta}} \,\Bigr),0\Bigr) & \mbox{if } \gamma\!<\!\infty.
 \end{cases}
\pagebreak[2]
\]
\item[{\rm (b)}]
\newcommand\scup{\mkern-2mu\cup\mkern-2mu}
If \,$0<\mu<2$ $($and hence $\beta>0)$, then
\[\hskip-1.2em
\sigma(\cA) \!\subset\!
\begin{cases}
\hspace{1.922cm}\Bigl\{\lambda \!\in\! \C \mid -\infty \!\le\! \re \lambda \!\leq\! -\dfrac{\beta}{2}, \;
|\im\lambda| \!\leq\! \dfrac{\sqrt{4\!-\!\mu^2}}{\mu} |\re \lambda | \Bigr\} \scup \Bigl[-\dfrac{\beta}{2},0 \Bigr) & \mbox{if } \gamma\!=\!\infty, \\[1.4ex]
 \left[-\gamma,-\dfrac{\gamma}{2}\right] \scup
\Bigl\{\lambda \!\in\! \C \mid -\dfrac \gamma 2 \!\le\!  \re \lambda \!\leq\! -\dfrac{\beta}{2}, \;
|\im\lambda| \!\leq\! \dfrac{\sqrt{4\!-\!\mu^2}}{\mu} |\re \lambda | \Bigr\} \scup \Bigl[-\dfrac{\beta}{2},0 \Bigr) & \mbox{if } \gamma\!<\!\infty;
\end{cases}
\]
if, in addition to $0<\mu<2$, also $\delta>0$, then
\[\hskip-1.2em
 \sigma(\cA) \!\subset\!
 \begin{cases}
    \!
    \Bigl(\!-\infty,-\dfrac 2\delta\Bigr) \scup \Bigl\{\lambda \!\in\! \C \!\mid\! - \dfrac 2\delta \!\le\! \re \lambda \!\leq\! -\dfrac {\mu^2}{2\delta}, \,
    \Bigl|\lambda \!+\! \dfrac{1}{\delta}\Bigr| \!\leq\! \dfrac{1}{\delta}\Bigr\} & \\[1.3ex]
    \,{}\cup \Bigl\{\lambda \!\in\! \C \!\mid\!  - \dfrac {\mu^2}{2\delta} \!\le\! \re \lambda \!\leq\! - \dfrac{\beta}{2}, \,
    |\im\lambda| \!\leq\! \dfrac{\sqrt{4\!-\!\mu^2}}{\mu} |\re \lambda | \Bigr\} \scup \Bigl[-\dfrac{\beta}{2},0 \Bigr)
    & \mbox{if } \gamma\!=\!\infty,\\[3.5mm]
    \!
    \Bigl[-\gamma,-\!\min\Bigl\{\dfrac 2\delta,\dfrac \gamma 2\Bigr\}\Bigr) \scup \Bigl\{\lambda \!\in\! \C \!\mid\! -\!\min\Bigl\{\dfrac 2\delta,\dfrac \gamma 2 \Bigr\}\!\le\!  \re \lambda \!\leq\! -\dfrac {\mu^2}{2\delta}, \,
    \Bigl|\lambda \!+\! \dfrac{1}{\delta}\Bigr| \!\leq\! \dfrac{1}{\delta}\Bigr\} \hspace{-9mm}& \\[1.4ex]
    \,{}\cup \Bigl\{\lambda \!\in\! \C \!\mid\!  - \dfrac {\mu^2}{2\delta} \!\le\! \re \lambda \!\leq\! - \dfrac{\beta}{2}, \,
    |\im\lambda| \!\leq\! \dfrac{\sqrt{4\!-\!\mu^2}}{\mu} |\re \lambda | \Bigr\} \scup \Bigl[-\dfrac{\beta}{2},0 \Bigr)
    & \mbox{if }\gamma\!<\!\infty,\, \mu^2 \mkern-4mu<\mkern-4mu \gamma\delta,
		\\[3.5mm]
    \!
    \Bigl[-\gamma,-\dfrac \gamma 2\Bigr) & \\
    \,{}\cup \Bigl\{\lambda \!\in\! \C \!\mid\! - \dfrac \gamma 2 \!\le\! \re \lambda \!\leq\! - \dfrac{\beta}{2}, \,
    |\im\lambda| \!\leq\! \dfrac{\sqrt{4\!-\!\mu^2}}{\mu} |\re \lambda | \Bigr\} \scup \Bigl[-\dfrac{\beta}{2},0 \Bigr)
    & \mbox{if }\gamma\!<\!\infty,\, \mu^2\!\ge\!\gamma\delta.
  \end{cases}
\]
\end{enumerate}
\end{prop}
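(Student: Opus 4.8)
The plan is to assemble the individual spectral enclosures of Theorem~\ref{nett} into a single picture, using that in the self-adjoint case Lemma~\ref{Wolfach} yields the inclusion $\sigma(\cA)\subset\overline{W^2(\cA|_{H_1\times H_1})}$ \emph{unconditionally}, together with the symmetry of $\overline{W^2(\cA|_{H_1\times H_1})}$ about the real axis; this is precisely why, in contrast to Corollary~\ref{may4} and Theorems~\ref{hendrik} and~\ref{supernett}, no hypothesis on resolvent points in the components of the complement is needed. Since $A_0^{-\frac12}DA_0^{-\frac12}$ is self-adjoint, \eqref{Simon1} holds with $k=0$, and the three bounding functions of Theorem~\ref{supernett} collapse to $h_{\rm i}(t)=0$ on $[0,\frac\beta2)$, to $h_{\rm ii}(t)=k_\mu t$ with $k_\mu=\sqrt{4/\mu^2-1}$ for $0<\mu<2$ and $k_\mu=0$ for $\mu\ge2$ (see~\eqref{kmuk0}), and to $h_{\rm iii}(t)=\sqrt{2t/\delta-t^2}$ on $[0,\frac2\delta]$, vanishing for $t>\frac2\delta$, as computed in the proof of Theorem~\ref{nett}. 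By this symmetry it suffices to read off the enclosure in the closed upper half-plane.

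For part~(a) the hypothesis $\mu\ge2$ forces $k_\mu=0$, so Theorem~\ref{nett}~(ii) already confines $\sigma(\cA)$ to $(-\infty,0)$; the bound $\re\lambda\ge-\gamma$ from Proposition~\ref{qnrincl} then gives $[-\gamma,0)$ when $\gamma<\infty$. If in addition $\delta>0$ and $\beta\delta>4$, the last inclusion of Theorem~\ref{nett}~(iii)---i.e.\ the spectral free strip $I_0$ of Theorem~\ref{hendrik}---removes the interval centred at $\frac\beta2$, producing the stated two-interval enclosure.

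For part~(b) with $0<\mu<2$ one superimposes Theorem~\ref{nett}~(i)--(iii), taking at each abscissa the smallest of the three bounds. Specialising the crossover points of Proposition~\ref{compare} to $k=0$ gives $\lambda_{\rm i,ii}=\frac\beta2$ and $\lambda_{\rm ii,iii}=\frac{\mu^2}{2\delta}$, the latter from the elementary equivalence $k_\mu t\le\sqrt{2t/\delta-t^2}\iff t\le\frac{\mu^2}{2\delta}$ (using $k_\mu^2+1=4/\mu^2$). Hence on $[0,\frac\beta2)$ the vanishing $h_{\rm i}$ confines the spectrum to the real segment $[-\frac\beta2,0)$, on $[\frac\beta2,\frac{\mu^2}{2\delta}]$ the sector bound $|\im\lambda|\le k_\mu|\re\lambda|$ of~(ii) is binding, and on $[\frac{\mu^2}{2\delta},\frac2\delta]$ the disk bound $|\lambda+\frac1\delta|\le\frac1\delta$ of~(iii) takes over, degenerating to the real ray beyond $\frac2\delta$. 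This yields the $\gamma=\infty$ inclusions of~(b), the disk part appearing only once $\delta>0$ is assumed; with $0<\mu<2$ alone the sector bound extends all the way to $-\infty$.

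When $\gamma<\infty$ one intersects further with the refinement~\eqref{Paderiv}, which caps $\re\lambda\ge-\gamma$ and, crucially, forces the spectrum onto the real axis for $|\re\lambda|>\frac\gamma2$. The three-way split then depends on where $\frac\gamma2$ falls relative to the disk, and this bookkeeping is the main obstacle. If $\mu^2\ge\gamma\delta$ then $\frac\gamma2\le\frac{\mu^2}{2\delta}$, so \eqref{Paderiv} collapses the entire disk portion to the real segment $[-\gamma,-\frac\gamma2)$ while the sector governs $[-\frac\gamma2,-\frac\beta2]$; if $\mu^2<\gamma\delta$ then $\frac\gamma2>\frac{\mu^2}{2\delta}$, the sector bound survives intact, and the non-real disk region runs from $-\frac{\mu^2}{2\delta}$ down to $-\min\{\frac2\delta,\frac\gamma2\}$, the portion beyond $\min\{\frac2\delta,\frac\gamma2\}$ collapsing onto the real segment $[-\gamma,\,\cdot\,)$ by reality. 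Assembling these pieces and reflecting across the real axis by Lemma~\ref{Wolfach} gives all the inclusions claimed.
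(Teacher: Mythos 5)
Your proposal is correct and follows essentially the same route as the paper: it assembles Theorem~\ref{nett}~(i)--(iii) together with the enclosure \eqref{Paderiv}, identifies the crossover abscissa $-\mu^2/(2\delta)$ where the sector bound hands over to the disk bound (you obtain it by specialising Proposition~\ref{compare} to $k=0$, the paper by intersecting the sector boundary with the circle directly --- the same computation), and splits the $\gamma<\infty$ case according to $\mu^2\lessgtr\gamma\delta$ exactly as the paper does. The only detail you gloss over is the check $\gamma\ge\beta>\frac\beta2\bigl(1+\sqrt{1-\frac{4}{\beta\delta}}\,\bigr)$ needed so that the left interval in part~(a) is well formed, which is a formality.
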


\begin{proof}
(a) The first claim for $\mu \ge 2$ is immediate from Theorem~\ref{nett} (ii) and \eqref{Paderiv};
the second claim follows if we additionally use Theorem \ref{hendrik} and observe that $\gamma \ge \beta$ by \eqref{ineq} and hence
$\gamma > \frac \beta 2 \Bigl( 1 + \sqrt{1\!-\!\frac 4{\beta\delta}} \,\Bigr)$.

(b)
The first claim for $0<\mu<2$ follows from Theorem~\ref{nett} (i), (ii) and \eqref{Paderiv}.
%
It remains to consider the case $0<\mu<2$ and $\delta>0$.
First we determine if, for $\re\lambda \in \bigl( - \frac 2\delta, - \frac \beta 2 \bigr]$,
the boundary of the sector in \eqref{Paderi} intersects the circle $(\re \lambda + \frac 1\delta)^2 + (\im \lambda)^2 = \frac 1{\delta^2}$ in \eqref{Paderiii}.
The imaginary part of boundary points of the sector equals $\pm \frac{\sqrt{4-\mu^2}}\mu |\re \lambda|$
and so points $\lambda$ of the intersection satisfy
\[
 \Bigl(\re \lambda + \frac{1}{\delta}\Bigr)^{\!2}  + \frac{4-\mu^2}{\mu^2} (\re \lambda)^2 = \frac{1}{\delta^2}\,.
\]
A simple calculation yields $\re \lambda = \smash[t]{-\frac{\mu^2}{2\delta}}$.
Observe that $- \frac 2 \delta <  -\frac{\mu^2}{2\delta} \le -\frac\beta2$ since $\mu<2$ and $\mu^2\ge\beta\delta$, see \eqref{ineq}.
Now Theorem~\ref{nett} (i), (ii), and (iii) implies all the claims for $\gamma=\infty$.
For $\gamma<\infty$, we additionally use \eqref{Paderiv} and recall that $\gamma \ge \beta$
by \eqref{bgdm}; then $-\frac\gamma2 \le -\frac\beta2$, and it remains to note that
$-\!\min\bigl\{\frac 2\delta,\frac \gamma 2\bigr\} < -\frac{\mu^2}{2\delta}$ if and only if $\mu^2<\gamma\delta$.
\end{proof}

\section{Application: Small transverse oscillations \\of an ideal incompressible fluid in a pipe}\label{ex}   

\renewcommand{\d}{{\rm d}}

The small transverse oscillations of a horizontal pipe of length normalized to~1 carrying a steady-state flow of an ideal incompressible fluid
are described~by
\begin{equation}\label{beam}
   \frac{\partial ^2 u } {\partial t^2 } + \frac{\partial
   ^2}{\partial r^2 } \left [ E  \frac{\partial ^2 u } {\partial r^2
   } + {C} \frac{\partial ^3 u }{\partial r^2 \partial t } \right] +
K \frac{\partial^2 u}{\partial t\partial r} =0,
   \hspace{2em} r \in (0,1), \ t > 0,
\end{equation}
see e.g.\ \cite{shk94}.
Here $u(r,t) $ denotes the transverse displacement at time $t$ and
position $r$, and $E$, $C$, $K$ are positive physical constants.
The last term on the left hand side of (\ref{beam})
is called the gyroscopic term.
If the pipe is pinned at both endpoints, the boundary conditions
\begin{equation}
u\big|_{r=0} =\   0,  \quad
 \frac{\partial^2 u }{\partial r^2}\bigg|_{r=0}  =\ 0,  \quad
  u\big|_{r=1} =\   0,  \quad
 \frac{\partial^2 u }{\partial r^2}\bigg|_{r=1}  =\ 0
    \label{bcs}
\end{equation}
have to be imposed at any time $t>0$.

The partial differential equation \eqref{beam} with boundary conditions \eqref{bcs} is a se\-cond order problem \eqref{sys}
in the Hilbert space $H \!=\! L^2 (0,1)$. Here the operator $A_0$ in $H$ is given~by
\begin{align*}
   A_0 = E \frac{\d^4}{\d r^4},
   \quad
   \dom(A_0) = \left\{ z \in H^4(0,1) \mid z(0)=
   z(1)=  z'' (0)= z'' (1) =0 \right\},
\end{align*}
where $H^4(0,1)$ is the fourth order Sobolev space associated with $L^2(0,1)$.
Clearly, $A_0$ satisfies assumption (A1), $A_0^{-1}$ is a compact operator, and
\[
 A_0^{\frac 12} = -\sqrt{E}\frac{\d^2}{\d r^2}, \quad  H_{\half} = \dom (A_0^{\frac 12}) = \left\{ z \in H^2(0,1)\mid z(0)=z(1) =0 \right\},
\]
with inner product and norm on $H_{\half}$ given by
\begin{equation}
\label{Delft}
\la z, v \ra_{\hhalf} = E \la z'', v'' \ra, \quad \|z\|_{\hhalf}^2 \geq E \pi^4 \|z\|^2,
\quad z, v \in  H_{\frac{1}{2}},
\end{equation}
i.e.\  $a_0=\sqrt{E}\pi^2$. The damping operator $D$ defined as
\begin{align*}
   D= C \frac{\d^4}{\d r^4} + K \frac{\d}{\d r} =\frac CE  A_0 + K \frac{\d}{\d r}: H_{\half} \to H_{-\half}
\end{align*}
is bounded and maps $\dom(A_0)$ into $H$. Moreover,
for $z\in H_\half$,
\begin{equation}
\label{eqn1}
  \re \langle Dz,z\rangle_{H_{-\half}\times H_{\half}}
   = C \langle z'',z''\rangle
   = \frac CE \|z\|^2_{\hhalf}
   \ge  \frac C{\sqrt{E}} \pi^2 \|z\|_{\hhalf} \|z\|
    \ge C \pi^4 \|z\|^2.
\end{equation}
Thus assumptions (A2) and (A3) hold as well. However, $D$ is not self-adjoint due to the first order derivative coming from the gyroscopic term in~\eqref{beam}.

From \eqref{eqn1} we obtain the following information on the constants in the spectral enclosures in Theorem~\ref{supernett} which were defined at the beginning of Section~\ref{section3}.

\begin{prop} \label{NEW22}
For the operator $D$, we have \vsqueeze{1mm}
\begin{equation*}\label{Numbers}
\beta = C \pi^4, \quad \gamma = \infty, \quad \delta =\frac CE\,, \quad
\mu =  \frac C{\sqrt{E}}  \pi^2,
\vspace{-3mm}
\end{equation*}
and one can choose 
\[
  k = \frac{K}{C\pi^3}\,.
\]
\end{prop}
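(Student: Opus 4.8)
The plan is to read off $\delta$ directly from the identity recorded in~\eqref{eqn1}, to obtain $\beta$, $\gamma$, $\mu$ from the spectral decomposition of $A_0$, and finally to extract a valid $k$ by isolating the gyroscopic term. The whole computation is driven by the observation in~\eqref{eqn1} that $\re\la Dz,z\ra = C\la z'',z''\ra = \frac CE\|z\|_{\hhalf}^2$ for $z\in H_{\half}$; that is, the ``real part'' of $D$ is a \emph{constant} multiple of the $\hhalf$--inner product, which is what makes all four constants explicit.

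First I would use this identity in the definition~\eqref{bgdm} of $\delta$: the quotient $\re\la Dz,z\ra/\|z\|_{\hhalf}^2$ is identically $\frac CE$, so $\delta=\frac CE$ with no further work. For $\beta$, $\gamma$, $\mu$ the same substitution reduces each quotient to the Rayleigh quotient $\|z\|_{\hhalf}^2/\|z\|^2=\la A_0z,z\ra/\|z\|^2$. Since $A_0=E\frac{\d^4}{\d r^4}$ with the given boundary conditions is diagonalized by the orthogonal basis $\{\sin(n\pi r)\}_{n\in\N}$ of $L^2(0,1)$ with eigenvalues $E(n\pi)^4$, this Rayleigh quotient ranges over $[E\pi^4,\infty)$, attaining its minimum $E\pi^4$ at $\sin(\pi r)$. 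Hence $\beta=\frac CE\cdot E\pi^4=C\pi^4$ and $\gamma=\frac CE\cdot\infty=\infty$, and taking the square root in the $\mu$--quotient gives $\mu=\frac CE\sqrt{E\pi^4}=\frac C{\sqrt E}\pi^2$. (As a consistency check, these values saturate all inequalities in~\eqref{ineq}, which is expected since the real part of $D$ is proportional to $A_0$.)

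The main work, and the only genuinely delicate step, is the choice of $k$, i.e.\ verifying~\eqref{Simon1}. Here I would split $\la Dz,z\ra=C\la z'',z''\ra+K\la z',z\ra$ for $z\in H_1$. The first summand is real, so the whole imaginary part is carried by the gyroscopic term. Integrating by parts and using $z(0)=z(1)=0$ gives $\re\la z',z\ra=\tfrac12\bigl[|z|^2\bigr]_0^1=0$, so $K\la z',z\ra$ is purely imaginary and $|\im\la Dz,z\ra|=K|\la z',z\ra|\le K\|z'\|\,\|z\|$. It remains to dominate $\|z'\|\,\|z\|$ by $\|z''\|^2$. Invoking the eigenbasis once more yields the Poincar\'e-type inequalities $\|z'\|\le\frac1\pi\|z''\|$ and $\|z\|\le\frac1{\pi^2}\|z''\|$, whence $\|z'\|\,\|z\|\le\frac1{\pi^3}\|z''\|^2=\frac1{C\pi^3}\re\la Dz,z\ra$. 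Therefore~\eqref{Simon1} holds with $k=\frac K{C\pi^3}$.

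I expect the last estimate to be the crux: one must notice that the imaginary part is produced entirely by the first-order gyroscopic term, and that the two Poincar\'e inequalities combine to the exponent $\pi^{-3}$, matching the derivative orders $z'$, $z$ against $z''$. Everything else is a direct substitution of~\eqref{eqn1} and the eigendata of $A_0$ into~\eqref{bgdm}.
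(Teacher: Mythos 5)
Your proof is correct and follows essentially the same route as the paper: all four constants are read off from the identity $\re\la Dz,z\ra = C\la z'',z''\ra$ together with the fact that the Rayleigh quotient of $A_0$ is minimized at $\sin(\pi r)$, and the bound on $k$ comes from isolating the purely imaginary gyroscopic term and dominating $\|z'\|\,\|z\|$ by $\pi^{-3}\|z''\|^2$. The only (cosmetic) difference is in that last step: the paper interpolates via $\|z'\|^2=-\la z'',z\ra\le\|z''\|\,\|z\|$ and then applies $\|z\|\le\pi^{-2}\|z''\|$, whereas you use the two Poincar\'e inequalities $\|z'\|\le\pi^{-1}\|z''\|$ and $\|z\|\le\pi^{-2}\|z''\|$ separately, arriving at the same constant.
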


\begin{proof}
\begingroup
\thickmuskip=5mu plus 4mu minus 1mu
From \eqref{eqn1} we obtain $\beta \ge C \pi^4$, $\gamma = \infty$, $\delta = \frac CE$ and $\mu \ge \smash{\frac{C}{\sqrt{E}}} \pi^2$.
Since in~\eqref{eqn1} equality holds everywhere if we choose $z=z_0$ where $z_0(t)=\sin (\pi t)$, $t\in[0,1]$, is
the eigenfunction of $A_0^{\frac 12}$ corresponding to its smallest~eigen\-value $\pi^2\sqrt{E}$, the equalities $\beta = C \pi^4$, $\mu = \frac C{\sqrt{E}} \pi^2$~follow.
\endgroup

To prove the last claim, we let $z \in H_{\half}$ and estimate
\begin{equation*} \label{Sobolev}
\|z'\|^2 = \la z',z' \ra =  -\la z'',z \ra \le \|z''\| \|z\|.
\end{equation*}
Using this estimate, $\|z\| \le \frac{1}{\pi^2} \|z''\|$ and \eqref{eqn1}, we conclude that
\[
\bigl|\im\la Dz,z\ra_{H_{-\half}\times H_{\half}}\bigr|
  =  K \bigl| \langle z',z\rangle \bigr|
 \leq K \|z''\|^{1/2} \|z\|^{3/2} \le \frac{K}{\pi^3} \|z''\|^2
  = \frac{K}{C\pi^3} \re \la Dz,z \ra_{H_{-\half}\times H_{\half}}.
  \qedhere
\]
\end{proof}
\vspace{0pt}

\begin{thm} \label{NEW222}
The spectrum of the operator $\cA$ given by \eqref{AA1} associated with the boundary value problem \eqref{beam}, \eqref{bcs}
satisfies the inclusion
\[
  \sigma(\cA)
  \subset
  \left\{ \lambda \in \C  \mid \re \lambda < 0, \ \Bigl| \re \lambda - \frac \beta 2 \Bigr|^2 \mkern-1.5mu \ge
	\Bigl( \frac \beta 2 \Bigr)^{\!2} \Bigl( 1 - \frac 4 {\beta \delta} \Bigr), \
 |\im \lambda| \le h(|\re\lambda|)
\right\},
\vspace{-2mm}
\]
where
\[
  h(t) =
\begin{cases}
\hspace{4mm} \dfrac {kt}{ 1- \frac 2\beta t}\,, & \ 0 \le t < \lambda_{\rm i,ii}, \\[2.5ex]
\displaystyle \sqrt{\frac \beta\delta \frac{t}{\beta-t}-t^2}\,, &
\ \lambda_{\rm i,ii} \le t \le \lambda_{\rm ii,iii}, \\
\hspace{8mm} h_{\rm iii}(t),
& \ \lambda_{\rm ii,iii} < t <\infty,
\end{cases}
\]
with $k_\mu$, $h_{\rm iii}$ as defined in Theorem~{\rm \ref{supernett}},
$\lambda_{\rm i,ii} = \frac \beta 2 \bigl( 1 - \frac k{k_\mu} \bigr)$,
$\lambda_{\rm ii,iii} = \frac \beta 2 \bigl( 1 + \frac k{k_\mu}\rule[-1.4ex]{0pt}{0pt} \bigr)$,
and the constants $\beta$, $\delta$, $\mu$, $k$ as defined in Proposition {\rm \ref{NEW22}};
in particular, there is a spectral free strip if $\beta\delta>4$,
\[
  \Re \sigma(\cA) \cap \biggl( \!- \frac{C\pi^4}2 \biggl( 1+\sqrt{1\!-\!\frac{4E}{C^2\pi^4}}\,\biggr),
	- \frac{C\pi^4}2 \biggl( 1-\sqrt{1\!-\!\frac{4E}{C^2\pi^4}}\,\biggr)\biggr) = \emptyset \quad \mbox{ if } \  \ C > \frac{2\sqrt{E}}{\pi^2}\,.
\]
\end{thm}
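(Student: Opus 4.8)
The plan is to obtain the theorem as a direct specialization of Proposition~\ref{compare}~(b) to the concrete constants computed in Proposition~\ref{NEW22}, the decisive feature being the coincidence $\mu^2=\beta\delta$ for this particular operator. First I would record from Proposition~\ref{NEW22} that $\beta=C\pi^4>0$, $\delta=C/E>0$, $\mu=C\pi^2/\sqrt E>0$ and $\gamma=\infty$, with admissible sectoriality constant $k=K/(C\pi^3)$; a one-line computation then gives $\mu^2=C^2\pi^4/E=\beta\delta$. Since $\delta>0$, all hypotheses of Proposition~\ref{compare}~(b) (and hence of Theorems~\ref{supernett} and~\ref{hendrik}) are met, so the four-piece enclosure of Proposition~\ref{compare}~(b) applies to $\sigma(\cA)$.

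The key simplification is supplied by the relation $\mu^2=\beta\delta$. By the final assertion of Proposition~\ref{compare}~(b), this is precisely the case $I_{0,\mu}=\bigl(\lambda_{\rm i,ii},\lambda_{\rm ii,iii}\bigr)$. Consequently the piece of the enclosure carrying the bound $k_\mu|\re\lambda|$, which is valid only on $\bigl(\lambda_{\rm i,ii},\lambda_{\rm ii,iii}\bigr)\setminus I_{0,\mu}$, becomes vacuous, and the bound $h_0$ takes over on all of $\bigl(\lambda_{\rm i,ii},\lambda_{\rm ii,iii}\bigr)\setminus I_0$. Using $\frac{\mu^2}{2\delta}=\frac\beta2$ I would rewrite $\lambda_{\rm ii,iii}=\frac\beta2\bigl(1+\frac k{k_\mu}\bigr)$, matching the statement. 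As $\gamma=\infty$, the restriction $|\re\lambda|\le\gamma$ disappears, so the three surviving pieces are exactly $h_{\rm i}$ on $[0,\lambda_{\rm i,ii})$, $h_0$ on $[\lambda_{\rm i,ii},\lambda_{\rm ii,iii}]\setminus I_0$, and $h_{\rm iii}$ on $(\lambda_{\rm ii,iii},\infty)$ — that is, the function $h$ of the theorem. Finally I would observe that the radicand of $h_0$ is nonnegative, $\frac\beta\delta\frac t{\beta-t}-t^2\ge0$, if and only if $\bigl|\,|\re\lambda|-\frac\beta2\bigr|^2\ge\bigl(\frac\beta2\bigr)^2\bigl(1-\frac4{\beta\delta}\bigr)$ (with $t=|\re\lambda|=-\re\lambda$), so this single inequality simultaneously encodes the excluded set $I_0$ and guarantees that $h_0$ is real wherever it is used; it therefore replaces the condition $|\re\lambda|\notin I_0$ appearing in Proposition~\ref{compare}~(b).

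For the concluding ``in particular'' claim I would translate the abstract threshold into the physical one: $\beta\delta=C^2\pi^4/E>4$ is equivalent to $C>2\sqrt E/\pi^2$, and by~\eqref{I0} this is exactly when $I_0\ne\emptyset$. Substituting $\beta=C\pi^4$ and $\frac4{\beta\delta}=\frac{4E}{C^2\pi^4}$ into the endpoints $\frac\beta2\bigl(1\mp\sqrt{1-\frac4{\beta\delta}}\bigr)$ of $I_0$ produces the stated interval, and since $\sigma(\cA)$ lies in the open left half-plane this $I_0$ in $|\re\lambda|$ becomes the spectral-free vertical strip $\Re\sigma(\cA)\cap(-I_0)=\emptyset$. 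I expect no genuine obstacle here: the argument is pure bookkeeping with no new estimate. The only point requiring care is recognizing the collapse $\mu^2=\beta\delta$, which merges the middle two pieces of the general enclosure of Proposition~\ref{compare}~(b) into the single branch $h_0$ and thereby opens the explicit spectral gap.
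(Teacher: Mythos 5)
Your proposal is correct and follows essentially the same route as the paper: the paper's proof likewise invokes Proposition~\ref{compare} and Remark~\ref{newrem}, observing that $\mu^2=\beta\delta$ forces $I_{0,\mu}=(\lambda_{\rm i,ii},\lambda_{\rm ii,iii})$ so that the $k_\mu|\re\lambda|$ piece disappears and $h_0$ covers the whole middle interval, and then obtains the spectral free strip by inserting the constants of Proposition~\ref{NEW22} into \eqref{I0}. Your additional remarks (that $\lambda_{\rm ii,iii}=\frac{\mu^2}{2\delta}(1+\frac{k}{k_\mu})=\frac\beta2(1+\frac{k}{k_\mu})$ and that nonnegativity of the radicand of $h_0$ encodes $|\re\lambda|\notin I_0$) are exactly the bookkeeping the paper leaves implicit.
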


\begin{proof}
All claims follow from Proposition~\ref{compare}
and Remark \ref{newrem} if we note that here $\beta \delta = \mu^2$ whence
$\lambda_{\rm ii,iii}$ has the claimed form and $I_{0,\mu} = \bigl(\lambda_{\rm i,ii},\lambda_{\rm ii,iii}\bigr)$.
The form of the spectral free strip $|\re\lambda| \notin I_0$ is obtained by inserting the constants from
Proposition~\ref{NEW22} into \eqref{I0}.
\end{proof}

\begin{example}\label{example}
For the physical constants
\[
  E=25, \quad C=1, \quad K=14
\vspace{-3mm}
\]
one can compute that
\[
    \lambda_{\rm i,ii} \approx 19.859, \qquad
    \lambda_{\rm ii,iii} \approx 77.550;
\]
the corresponding spectral inclusion in Theorem \ref{NEW222} is displayed in Figure~\rmref{fig8}.
Note that here Theorem~\rmref{NEW222} does not yield a spectral gap since $C = 1 < 10/\pi^2 = 2\sqrt E/\pi^2$.
If we increase $C$ to the critical value $2\sqrt{E}/\pi^2$, i.e.\ if we choose
\[
  E=25, \quad \smash{C=\frac{10}{\pi^2}}, \quad K=14,
\vspace{-3mm}
\]
then
\[
    \lambda_{\rm i,ii} \approx 19.852, \qquad
    \lambda_{\rm ii,iii} \approx 78.844;
\]
Figure~\rmref{fig9} shows the corresponding spectral inclusion in Theorem \ref{NEW222} right before the opening of the spectral free strip.
\begin{figure}%
\newcommand\numrange[2]{%
\def\numr{-#1*x+2*sqrt(-x/#2)}
\begin{scope}[very thin,color=lightgray,fill=lightgray]
  \addplot[domain=-110:-0.01,samples=100] {-(\numr)} \closedcycle;
  \addplot[domain=-110:-0.01,samples=100] {\numr} \closedcycle;
\end{scope}
  \addplot[domain=-110:-0.01,samples=100,color=gray] {-(\numr)};
  \addplot[domain=-110:-0.01,samples=100,color=gray] {\numr};
  \draw[-] (axis cs:-110,-80) -- (axis cs:0,-80);
  \draw[-] (axis cs:-110,80) -- (axis cs:0,80);
  \draw[-] (axis cs:-110,-80) -- (axis cs:-110,80);
}
\thickmuskip=4mu plus 3mu minus 2mu
\setlength{\captionmargin}{3mm}
\begin{floatrow}
\ffigbox[\FBwidth]{\caption{\newline
Example~\ref{example} for physical parameters $E=25$, $C=1$, $K=14$.}
\label{fig8}}
{
\begin{tikzpicture}
\def\onetwo{-19.8593}
\def\twothree{-77.5498}
\def\hnull{sqrt(-2435.227*x/(97.4+x)-x^2)}
\def\hi{0.4515215*x/(1+0.020532*x)}
\def\hiii{0.4175*x-26.75}
\begin{axis}[xmin=-110,xmax=15,ymin=-80,ymax=80]
\pgfplotsset{ticks=none}
\numrange{0.45152}{0.04}
\begin{scope}[very thin,color=red,fill=red]
  \addplot[domain=\onetwo:-0.01,samples=30] {-\hi} \closedcycle;
  \addplot[domain=\onetwo:-0.01,samples=30] {\hi} \closedcycle;
  \addplot[domain=\twothree:\onetwo,samples=30] {-\hnull} \closedcycle;
  \addplot[domain=\twothree:\onetwo,samples=30] {\hnull} \closedcycle;
  \addplot[domain=-110:\twothree,samples=10] {\hiii} \closedcycle;
  \addplot[domain=-110:\twothree,samples=10] {-(\hiii)} \closedcycle;
\end{scope}
  \addplot[domain=\onetwo:-0.01,samples=30] {-\hi};
  \addplot[domain=\twothree:\onetwo,samples=30] {-\hnull};
  \addplot[domain=-110:\twothree,samples=10] {\hiii};
  \addplot[domain=-110:\twothree,samples=10] {-(\hiii)};
  \addplot[domain=\twothree:\onetwo,samples=30] {\hnull};
  \addplot[domain=\onetwo:-0.01,samples=30] {\hi};
  \addplot[domain=-110:15,samples=2] {0};
\draw[thick, -] (axis cs:-1,50) -- (axis cs:1,50);
\draw[-] (axis cs:0,-80) -- (axis cs:0,80);
\draw[thick, -] (axis cs:\onetwo,-1.5) -- (axis cs:\onetwo,1.5);
\draw[thick, -] (axis cs:\twothree,-1.5) -- (axis cs:\twothree,1.5);
\pgfplotsset{
   after end axis/.code={
        \node[right] at (axis cs:0,50){ $50$ };
        \node[below] at (axis cs:\onetwo,-1){ \llap{$-$}$19.9$ };
        \node[below] at (axis cs:\twothree,-1){ $-77.5\;$ };
				}}
    \end{axis}
\end{tikzpicture}
}
\hspace{3mm}
\ffigbox[\FBwidth]{\caption{\newline
Example~\ref{example} for physical parameters $E=25$, $C=\frac{10}{\pi^2}$, $K=14$.}
\label{fig9}}
{
\begin{tikzpicture}
\def\onetwo{-19.8518}
\def\twothree{-78.8442}
\def\hnull{sqrt(-2435.227*x/(98.696+x)-x^2)}
\def\hi{0.445634*x/(1+0.020264*x)}
\def\hiii{0.4108*x-26.4}
\def\samples{89}
\begin{axis}[xmin=-110,xmax=15,ymin=-80,ymax=80]
\pgfplotsset{ticks=none}
\numrange{0.44563}{0.04053}
\begin{scope}[very thin,color=red,fill=red]
  \addplot[domain=\onetwo:-0.01,samples=30] {-\hi} \closedcycle;
  \addplot[domain=\onetwo:-0.01,samples=30] {\hi} \closedcycle;
  \addplot[domain=\twothree:\onetwo,samples=\samples] {-\hnull} \closedcycle;
  \addplot[domain=\twothree:\onetwo,samples=\samples] {\hnull} \closedcycle;
  \addplot[domain=-110:\twothree,samples=10] {\hiii} \closedcycle;
  \addplot[domain=-110:\twothree,samples=10] {-(\hiii)} \closedcycle;
\end{scope}
  \addplot[domain=\onetwo:-0.01,samples=30] {-\hi};
  \addplot[domain=\twothree:\onetwo,samples=\samples] {-\hnull};
  \addplot[domain=-110:\twothree,samples=10] {\hiii};
  \addplot[domain=-110:\twothree,samples=10] {-(\hiii)};
  \addplot[domain=\twothree:\onetwo,samples=\samples] {\hnull};
  \addplot[domain=\onetwo:-0.01,samples=30] {\hi};
  \addplot[domain=-110:15,samples=2] {0};
\draw[thick, -] (axis cs:-1,50) -- (axis cs:1,50);
\draw[-] (axis cs:0,-80) -- (axis cs:0,80);
\draw[thick, -] (axis cs:\onetwo,-1.5) -- (axis cs:\onetwo,1.5);
\draw[thick, -] (axis cs:\twothree,-1.5) -- (axis cs:\twothree,1.5);
\pgfplotsset{
   after end axis/.code={
        \node[right] at (axis cs:0,50){ $50$ };
        \node[below] at (axis cs:\onetwo,-1){ \llap{$-$}$19.9$ };
        \node[below] at (axis cs:\twothree,-1){ $-78.8\;$ };
				}}
    \end{axis}
\end{tikzpicture}
}
\end{floatrow}
\vspace{6mm}
\begin{center}
Figures \ref{fig8}, \ref{fig9}: Spectral enclosures obtained from 
$W(\cA)$ (light grey) and \\ from $W^2(\cA)$ (\redgrey).
\end{center}
\end{figure}
\end{example}

\bigskip

\noindent
{\bf Acknowledgements.}
The second author gratefully acknowledges the support of the \emph{Swiss National Science Foundation}, SNF, grants no.\ $200020\_146477$ and $169104$.

{\small

\bigskip

\noindent
\parbox[t]{9cm}{\small
Birgit Jacob \\
Fakult\"at f\"ur Mathematik und Naturwissenschaften\\
Fachgruppe Mathematik und Informatik\\
Bergische Universit\"at  Wuppertal\\
Gau\ss stra\ss e 20\\
D-42119 Wuppertal\\
Germany\\
Tel.: +49-202-439-2527\\
Email: jacob@math.uni-wuppertal.de\\
}
\parbox[t]{6.7cm}{\small
Christiane Tretter \\
Mathematisches Institut \\
Universit\"{a}t Bern\\
Sidlerstra\ss e 5 \\
CH-3012 Bern\\
 Switzerland\\
Tel.: +41-31-631-8820\\
Email: tretter@math.unibe.ch
}

\ \\
\noindent
\parbox{9cm}{\small
Carsten Trunk \\
Institut f\"ur Mathematik \\
Technische Universit\"at Ilmenau \\
Postfach 100565 \\
D-98684 Ilmenau \\ Germany \\ Tel.: +49-3677-69-3253 \\
Fax: +49-3677-69-3270  \\Email: carsten.trunk@tu-ilmenau.de
}
\parbox{6.7cm}{\small
Hendrik Vogt\\
Fachbereich 3 - Mathematik \\
Universit\"at Bremen\\
Postfach 330 440\\
Bibliothekstra\ss e 1\\
D-28359 Bremen \\
Germany\\
Tel.: +49-421-218-63702\\
Email:
hendrik.vo\rlap{\textcolor{white}{hugo@egon}}gt@uni-\rlap{\textcolor{white}{%
hannover}}bremen.de}

}  
\bigskip



\begin{thebibliography}{9999}

\bibitem{Art}
N.V.\ Artamonov,
Estimate of the decay exponent of an operator
semigroup associated with a second-order linear differential equation,
Math.\ Notes,
91(5):731-734, 2012.

\bibitem{bi}
H.T.\ Banks and K.~Ito, A unified framework for approximation in
inverse problems for
  distributed parameter systems,
  Control Theory Adv.\ Tech., 4(1):73--90, 1988.

\bibitem{biw}
H.T.\ Banks, K.~Ito, and Y.~Wang,
  Well posedness for damped second-order systems with unbounded input
  operators,
   Differential Integral Equations, 8(3):587--606, 1995.

\bibitem{baen}
A.~B{\'a}tkai and K.~Engel,
  Exponential decay of {$2\times2$} operator matrix semigroups,
   J.\ Comput.\ Anal.\ Appl., 6(2):153--163, 2004.

\bibitem{bog}
J. Bogn{\'a}r, Indefinite Inner Product Spaces, Springer Verlag, New York-Heidelberg, 1974.

\bibitem{chru}
G.~Chen and D.L.\ Russell,
  A mathematical model for linear elastic systems with structural
  damping,
   Q.\ Appl.\ Math., 39:433--454, 1982.

\bibitem{CLL}
S.~Chen, K.~Liu, and Z.~Liu,
  Spectrum and stability for elastic systems with global or local
  {K}elvin-{V}oigt damping,
   SIAM J.\ Appl.\ Math., 59(2):651--668 (electronic), 1999.

\bibitem{chtr}
S.~Chen and R.~Triggiani,
  Proof of extensions of two conjectures on structural damping for
  elastic systems,
   Pacific J.\ Math., 136(1):15--55, 1989.

\bibitem{chtr2}
S.~Chen and R.~Triggiani,
  {Characterization of domains of fractional powers of certain
  operators arising in elastic systems, and applications,}
   J.\ Differential Equations, 88(2):279--293, 1990.

\bibitem{Con2}
J.B.\ Conway,
            Functions of one Complex Variable,
          Second Edition, Springer, 1978.
					
\bibitem{CueT}
J.-C.~Cuenin and C.~Tretter,
          Non-symmetric perturbations of self-adjoint operators,
					J.\ Math.\ Anal.\ Appl., 441(1):235--258, 2016.    					

\bibitem{EN}
K.~Engel and R.~Nagel,
   One-Parameter Semigroups for Linear Evolution Equations,
  Springer, 2000.


\bibitem{HS0}
R.O.~Hryniv and A.A.~Shkalikov, Operator models in elasticity theory
and hydrodynamics and associated analytic semigroups,
 Moscow Univ.\ Math.\ Bull., 54(5):1--10, 1999.

\bibitem{HS}
R.O.~Hryniv and A.A.~Shkalikov, Exponential stability of semigroups
related to operator models in mechanics,   Math.\ Notes,
73(5):618-624, 2003.

\bibitem{HSII}
R.O.\ Hryniv and A.A.\ Shkalikov,
  {Exponential decay of solution energy for equations associated with
  some operator models of mechanics,}
   Funct.\ Anal.\ Appl., 38(3):163--172, 2004.

\bibitem{hu2}
F.~Huang,
  {On the mathematical model for linear elastic systems with analytic
  damping,}
   SIAM J.\ Control Optim., 26(3):714--724, 1988.

\bibitem{hu}
F.~Huang,
  {Some problems for linear elastic systems with damping,}
   Acta Math.\ Sci., 10(3):319--326, 1990.

\bibitem{huNEU}
S.-Z.~Huang,
  {On energy decay rate of linear damped elastic systems,}
   T\"{u}binger Berichte zur Funktionalanalysis, 6:65-91, 1997.



\bibitem{JT}
B.~Jacob and C.~Trunk,
  Location of the spectrum of operator matrices which are associated to
  second order equations,
   Oper.\ Matrices, 1:45--60, 2007.

\bibitem{JT2}
B.\ Jacob and C.\ Trunk, Spectrum and analyticity of semigroups
arising in elasticity theory and hydromechanics,
Semigroup Forum, 79(1):79--100, 2009.


\bibitem{JTW}
B.\ Jacob, C.\ Trunk, and M.\ Winklmeier, Analyticity and Riesz
basis property of semigroups associated to damped vibrations,
J.\ Evol.\ Equ., 8(2):263--281, 2008.

\bibitem{K} T.\ Kato,
            Perturbation Theory for Linear Operators,
           Second Edition, Springer, 1976.


\bibitem{lanshk}
P.~Lancaster and A.A.\ Shkalikov,
  Damped vibrations of beams and related spectral problems,
   Canad.\ Appl.\ Math.\ Quart., 2(1):45--90, 1994.

\bibitem{L62}
H.~Langer,
   Zur Spektraltheorie J-selbstadjungierter Operatoren,
   Math.\ Ann.\ 146:60--85, 1962.

\bibitem{LMMT}
H.\ Langer, A.S.\ Markus, V.I.\ Matsaev, and C.\ Tretter, A new concept
for block operator matrices: The quadratic numerical range, Linear
Algebra Appl., 330(1-3):89--112, 2001.

\bibitem{LMT}
 H.\ Langer, A.S.\ Markus, and C.\ Tretter, Corners of numerical ranges, in: Recent Advances in
Operator Theory, Groningen, 1998, in: Oper.\ Theory Adv.\ Appl.,
124, Birkh\"{a}user, Basel, 2001, 385--400.

\bibitem{LT}
 H.\ Langer and C.\ Tretter,
Spectral decomposition of some nonselfadjoint block operator
matrices, J.\ Operator Theory, 39(2):339--359, 1998.

\bibitem{LNT1}
 H.\ Langer, B.\ Najman, and C.\ Tretter,
Spectral theory of the {K}lein-{G}ordon equation in {P}ontryagin spaces,
Comm.\ Math.\ Phys.\ 267:159--180, 2006.

\bibitem{LNT2}
 H.\ Langer, B.\ Najman, and C.\ Tretter,
Spectral theory of the {K}lein-{G}ordon equation in {K}rein spaces,
Proc.\ Edinburgh Math.\ Soc.\ 51:711--750, 2008.


\bibitem{shk94}
A.A.~Shkalikov, Operator pencils arising in elasticity and
hydrodynamics: the instability index formula, Recent developments in
operator theory and its applications
 (Winnipeg, MB, 1994), 358-385, Oper.\ Theory Adv.\ Appl., 87, Birkh\"{a}user, Basel, 1996.

\bibitem{T-art}
C.\ Tretter,
Spectral inclusion for unbounded block operator matrices, 
J.\ Funct.\ Anal., 256:3806--3829, 2009.

\bibitem{T}
C.\ Tretter,
Spectral Theory of Block Operator Matrices and Applications,
Imperial College Press, London, 2008.

\bibitem{TWII}
M.\ Tucsnak and G.\ Weiss,
  How to get a conservative well-posed linear system out of thin air.
  {I}. {W}ell-posedness and energy balance,
   ESAIM Control Optim.\ Calc.\ Var., 9:247--274 (electr.), 2003.


\bibitem{ves}
K.~Veseli{\'c},
  Energy decay of damped systems,
  Z.\ Angew.\ Math.\ Mech., 84(12):856--863, 2004.
\end{thebibliography}
\end{document}